\numberwithin{equation}{section}
\numberwithin{figure}{section}
\theoremstyle{plain}
\newtheorem{thm}{Theorem}[section]
\theoremstyle{plain}
\newtheorem{lem}[thm]{Lemma}
\theoremstyle{remark}
\newtheorem{rem}[thm]{Remark}
\theoremstyle{plain}
\newtheorem{cor}[thm]{Corollary}
\theoremstyle{definition}
\newtheorem{defn}[thm]{Definition}
\theoremstyle{definition}
\theoremstyle{definition}
\theoremstyle{plain}
\newtheorem{prop}[thm]{Proposition}
\theoremstyle{plain}
\newtheorem{fact}[thm]{Fact}
\theoremstyle{definition}
\theoremstyle{plain}
\newcommand{\comments}[1]{}
\newcommand{\ra}{\rightarrow}
\newcommand{\rab}{\rangle}
\newcommand{\lra}{\longrightarrow}
\newcommand{\lab}{\langle}
\newcommand{\mcal}{\mathcal}
\newcommand{\N}{\mathbb N}
\newcommand{\C}{\mathbb{C}}
\newcommand{\mscr}{\mathscr}
\newcommand{\vlon}{\varepsilon}
\newcommand{\btimes}{\boxtimes}
\title{Q-system completeness of unitary connections}
\author{Mainak Ghosh}
\newcommand{\Contact}{{
		\bigskip
		\footnotesize
		
%
		\medskip

		Mainak Ghosh, \textsc{Stat-Math Unit, Indian Statistical Institute}\par\nopagebreak
		\textit{E-mail address}: \texttt{main\_ghosh@rediffmail.com}
		
		
%
%
}}
\begin{document}
\maketitle
	\global\long\def\vlon{\varepsilon}
	\global\long\def\bt{\bowtie}
	\global\long\def\ul#1{\underline{#1}}
	\global\long\def\ol#1{\overline{#1}}
	\global\long\def\norm#1{\left\|{#1}\right\|}
	\global\long\def\os#1#2{\overset{#1}{#2}}
	\global\long\def\us#1#2{\underset{#1}{#2}}
	\global\long\def\ous#1#2#3{\overset{#1}{\underset{#3}{#2}}}
	\global\long\def\t#1{\text{#1}}
	\global\long\def\lrsuf#1#2#3{\vphantom{#2}_{#1}^{\vphantom{#3}}#2^{#3}}
	\global\long\def\tr{\triangleright}
	\global\long\def\tl{\triangleleft}
	\global\long\def\cc90#1{\begin{sideways}#1\end{sideways}}
	\global\long\def\turnne#1{\begin{turn}{45}{#1}\end{turn}}
	\global\long\def\turnnw#1{\begin{turn}{135}{#1}\end{turn}}
	\global\long\def\turnse#1{\begin{turn}{-45}{#1}\end{turn}}
	\global\long\def\turnsw#1{\begin{turn}{-135}{#1}\end{turn}}
	\global\long\def\fusion#1#2#3{#1 \os{\textstyle{#2}}{\otimes} #3}
	
	\global\long\def\abs#1{\left|{#1}\right	|}
	\global\long\def\red#1{\textcolor{red}{#1}}

\begin{abstract}
	A Q-system is a unitary version of a separable Frobenius algebra object in a C*-tensor category. In a recent joint work with P. Das, S. Ghosh and C. Jones, the author has categorified Bratteli diagrams and unitary connections by building a $2$-category $\textbf{UC}$. We prove that every $Q$-system in $\textbf{UC}$ splits.  
\end{abstract}

\section{Introduction}
V. Jones' groundbreaking results on index for subfactors \cite{J83} has led to remarkable progress in the development of the theory of subfactors. The standard invariant of a finite index subfactor of a $II_1$ factor was first defined as a $\lambda$-lattice \cite{P95}. In \cite{M03}, a $Q$-system which is a unitary version of a Frobenius algebra object in a C*-tensor category or C*-2-category, is exhibited as an alternative axiomatization of the standard invariant of a finite index subfactor (\cite{O88}, \cite{P95}, \cite{J99}). This further fostered classification of small index subfactors (\cite{JMS}, \cite{AMP}). Q-systems were first introduced in \cite{Lon} to characterize canonical endomorphism associated to a finite index subfactor of an infinite factor. 

Given any rigid, semisimple, C*-tensor category $\mcal C$ with simple tensor unit $\mathbbm{1}$, an indecomposable Q-system $Q \in \mcal C \ \left(\t{End}_{Q-Q} \left( Q \right) \simeq \C \right)$ and a fully-faithful unitary tensor functor $H : \mcal C \to \t{Bim}(N)$ for some $II_1$ factor $N$, we can apply realization procedure (\cite{JP19}, \cite{JP20}) to construct a $II_1$ factor $M$ containing $N$ as a generalized crossed product $N \rtimes_{H} Q$. Also, every irreducible finite index extension of $N$ is of this form.

In the context of C*-2-categories, a Q-system is a 1-cell $_bQ_b \in \mcal C_1(b,b)$ along with two 2-cells $m : Q \boxtimes Q \to Q$ (multiplication) and $i : 1_b \to Q$ (unit), which are graphically denoted by the following:
\[m = \raisebox{-6mm}{
\begin{tikzpicture}
\draw[red,in=90,out=90,looseness=2] (-0.5,0.5) to (-1.5,0.5);
\node at (-1,1.1) {${\color{red}\bullet}$};
\draw[red] (-1,1.1) to (-1,1.6);
\node[left,scale=0.7] at (-1,1.4) {$Q$};
\node[left,scale=0.7] at (-1.6,0.5) {$Q$};
\node[right,scale=0.7] at (-.5,.5) {$Q$};
\end{tikzpicture}} \ \ \ \ \ \ i = \raisebox{-6mm}{
\begin{tikzpicture}
\draw [red] (-0.8,-.6) to (-.8,.6);
\node at (-.8,-.6) {${\color{red}\bullet}$};
\node[left,scale=0.7] at (-.8,0) {$Q$};
\end{tikzpicture}} \ \ \ \ \ \ m^* = \raisebox{-6mm}{
\begin{tikzpicture}
\draw[red,in=-90,out=-90,looseness=2] (-0.5,0.5) to (-1.5,0.5);
\node at (-1,-.1) {${\color{red}\bullet}$};
\draw[red] (-1,-.1) to (-1,-.6);
\node[left,scale=0.7] at (-1,-.4) {$Q$};
\node[left,scale=0.7] at (-1.6,0.5) {$Q$};
\node[right,scale=0.7] at (-.5,.5) {$Q$};
\end{tikzpicture}} \ \ \ \ \ \ i^* = \raisebox{-6mm}{
\begin{tikzpicture}
\draw [red] (-0.8,-.6) to (-.8,.6);
\node at (-.8,.6) {${\color{red}\bullet}$};
\node[left,scale=0.7] at (-.8,0) {$Q$};
\end{tikzpicture}} \]
These $2$-cells satisfy the following:
\[\raisebox{-6mm}{
	\begin{tikzpicture}
	\draw[red,in=90,out=90,looseness=2] (0,0) to (1,0);
	\draw[red,in=90,out=90,looseness=2] (0.5,.6) to (-.5,.6);
	\draw[red] (-.5,.6) to (-.5,0);
	\node at (.5,.6) {$\red{\bullet}$};
	\node at (0,1.2) {$\red{\bullet}$};
	\draw[red] (0,1.2) to (0,1.6);
	\end{tikzpicture}}
= \raisebox{-6mm}{\begin{tikzpicture}
	\draw[red,in=90,out=90,looseness=2] (0,0) to (1,0);
	\draw[red,in=90,out=90,looseness=2] (.5,.6) to (1.5,.6);
	\node at (.5,.6) {$\red{\bullet}$};
	\draw[red] (1.5,.6) to (1.5,0);
	\node at (1,1.2) {$\red{\bullet}$};
	\draw[red] (1,1.2) to (1,1.6);
	\end{tikzpicture}} \ \t{(associativity)} \ \ \ \ \ \ \raisebox{-4mm}{
	\begin{tikzpicture}
	\draw[red,in=90,out=90,looseness=2] (0,0) to (1,0);
	\node at (.5,.6) {$\red{\bullet}$};
	\node at (0,0) {$\red{\bullet}$};
	\draw[red] (.5,.6) to (.5,1.2);
	\end{tikzpicture}}=
\raisebox{-4mm}{
\begin{tikzpicture}
\draw[red,in=90,out=90,looseness=2] (0,0) to (1,0);
\node at (.5,.6) {$\red{\bullet}$};
\node at (1,0) {$\red{\bullet}$};
\draw[red] (.5,.6) to (.5,1.2);
\end{tikzpicture}}
=
\raisebox{-2mm}{
\begin{tikzpicture}
\draw[red] (0,0) to (0,1.2);
\end{tikzpicture}} \ \t{(unitality)} \]

\[\raisebox{-8mm}{
	\begin{tikzpicture}
	\draw[red,in=90,out=90,looseness=2] (0,0) to (1,0);
	\draw[red,in=-90,out=-90,looseness=2] (1,0) to (2,0);
	\node at (.5,.6) {$\red{\bullet}$};
	\node at (1.5,-.6) {$\red{\bullet}$};
	\draw[red] (.5,.6) to (.5,1.2);
	\draw[red] (1.5,-.6) to (1.5,-1.2);
	\draw[red] (0,0) to (0,-.6);
	\draw[red] (2,0) to (2,.6);
	\end{tikzpicture}} =
\raisebox{-6mm}{
	\begin{tikzpicture}
	\draw[red,in=90,out=90,looseness=2] (0,0) to (1,0);
	\node at (.5,.6) {$\red{\bullet}$};
	\draw[red] (.5,.6) to (.5,1.2);
	\draw[red,in=-90,out=-90,looseness=2] (0,1.8) to (1,1.8);
	\node at (.5,1.2) {$\red{\bullet}$};
	\end{tikzpicture}} =
\raisebox{-8mm}{
	\begin{tikzpicture}
	\draw[red,in=-90,out=-90,looseness=2] (0,0) to (1,0);
	\draw[red,in=90,out=90,looseness=2] (1,0) to (2,0);
	\node at (.5,-.6) {$\red{\bullet}$};
	\node at (1.5,.6) {$\red{\bullet}$};
	\draw[red] (.5,-.6) to (.5,-1.2);
	\draw[red] (1.5,.6) to (1.5,1.2);
	\draw[red] (0,0) to (0,.6);
	\draw[red] (2,0) to (2,-.6);
	\end{tikzpicture}} \ \t{(Frobenius condition)} \ \ \ \ \ \ \raisebox{-6mm}{
	\begin{tikzpicture}
	\draw[red,in=90,out=90,looseness=2] (0,0) to (1,0);
	\draw[red,in=-90,out=-90,looseness=2] (0,0) to (1,0);
	\node at (.5,-.6) {$\red{\bullet}$};
	\node at (.5,.6) {$\red{\bullet}$};
	\draw[red] (.5,-.6) to (.5,-1.2);
	\draw[red] (.5,.6) to (.5,1.2);
	\end{tikzpicture}} = 
\raisebox{-6mm}{
\begin{tikzpicture}
\draw[red] (0,0) to (0,2.4);
\end{tikzpicture}} \ \t{(Separability)} \]

\vspace*{4mm}

Recently \cite{CPJP} introduced the notion of \textit{Q-system completion} for C*/W*-2-categories which is another version of a higher idempotent completion for C*/W*-2-categories in comparison with 2-categories of separable monads \cite{DR18} and condensation monads in \cite{GJF19}. Given a C*/W*-2-category $\mcal C$, it's \textit{Q-system completion} is the 2-category $\textbf{QSys}(\mcal C)$ of Q-systems, bimodules and intertwiners in $\mcal C$ . There is a canonical inclusion *-2-functor $\iota_{\mcal C} : \mcal C \hookrightarrow \textbf{QSys}(\mcal C)$ which is always an equivalence on all hom categories. $\mcal C$ is said to be \textit{Q-system complete} if $\iota_{\mcal C}$ is a *-equivalence of *-2-categories.
We study \textit{Q-system completeness} in the context of pre-C*-2-categories. We call a pre-C*-2-category $\mcal C$ to be \textit{Q-system complete} if every $Q$-system in $\mcal C$ `splits'.

In our recent joint paper \cite{DGGJ}, we gave a higher categorical interpretation of Bratteli diagrams and unitary connections in terms of a larger W*-2-category $\textbf{UC}^\t{tr}$. The $0$-cells of $\textbf{UC}^\t{tr}$ are Bratteli diagrams with tracial weighting data. These generalize the Bratteli diagrams appearing from taking the tower of relative commutants of a finite-index subfactor. $1$-cells of $\textbf{UC}^\t{tr}$ are \textit{unitary connections} between Bratteli diagrams which are compatible with the tracial data. Finally the $2$-cells are defined as certain fixed points of a ucp (unital completely positive) map. To define $\textbf{UC}^\t{tr}$, we had to first consider a purely algebraic category $\textbf{UC}$. The $0$-cells of $\textbf{UC}$ are Bratteli diagrams (without the tracial data). $1$-cells of $\textbf{UC}$ are unitary connections and $2$-cells are natural intertwiners between connections which we call flat sequences. $\textbf{UC}$ has a close resemblance to the $ 2 $-category studied in \cite{CPJ} in the context of fusion category actions on AF-C*-algebras, with minor differences at the level of $0$-cells and $2$-cells only.

We investigate Q-system completeness of $\textbf{UC}$. The following is the main theorem of the paper.
\begin{thm}
	$\normalfont \textbf{UC}$ is $Q$-system complete.
\end{thm}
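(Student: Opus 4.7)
The plan is to construct, for every Q-system $(Q,m,i)$ on a Bratteli diagram $b$ in $\textbf{UC}$, an explicit splitting: a new Bratteli diagram $\tilde b$, a unitary connection $X : \tilde b \to b$, and a $2$-cell isomorphism of Q-systems $X \otimes X^* \cong Q$, where the left side carries its canonical algebra structure. The guiding principle is the classical fact that a separable Frobenius $*$-algebra is recovered from its module category as the endomorphism algebra of the tautological generator; since Bratteli diagrams are essentially towers of finite-dimensional semisimple $*$-algebras, this classical construction should be carried out level by level in the tower and then glued.

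First I would unpack the data of a Q-system in $\textbf{UC}$: the $1$-cell $Q : b \to b$ is a unitary connection, and the $2$-cells $m$ and $i$, being flat sequences, endow $Q$ at every finite stage of the Bratteli tower underlying $b$ with the structure of a finite-dimensional separable Frobenius $*$-algebra, with the structures at consecutive levels compatible with the Bratteli inclusions. Second, I would take $\tilde b$ to have as its level-$n$ vertices the simple modules of the algebra derived from $Q$ at level $n$, with edges between levels $n$ and $n+1$ determined by restriction along the $b$-inclusions twisted by $m$. Third, the connection $X : \tilde b \to b$ records at each level the multiplicities of simple $Q$-modules inside the underlying vertex objects of $b$, with bi-unitarity ensured by separability; its adjoint $X^*$ plays the role of induction.

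For the final isomorphism $\Phi : X \otimes X^* \to Q$, the candidate map is the categorical Artin--Wedderburn identification at each level, assembled over the tower. Preservation of multiplication and unit, together with invertibility, is the standard Frobenius--separability computation. The main obstacle lies not in the level-wise construction, which is essentially routine, but in the \emph{gluing}: one must verify that $\tilde b$ is a genuine Bratteli diagram, that $X$ is a bona fide unitary connection (satisfying bi-unitarity and the appropriate compatibility with the tower), and, crucially, that the level-wise isomorphisms $\Phi_n$ assemble into a flat sequence in the sense of $\textbf{UC}$. Flatness of $\Phi$ is the real subtlety: it reduces to showing that the level-wise Artin--Wedderburn identifications intertwine the connection data, which in turn follows from $m$ itself being flat, so that the compatibility of the algebra structure with the tower inclusions transports through $\Phi$. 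This is where most of the technical bookkeeping will be concentrated; the Frobenius condition, rather than just associativity and separability, is what makes this transport actually work.
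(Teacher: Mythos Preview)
Your plan is essentially the paper's own: extract a tower of finite-dimensional C*-algebras from the Q-system, take their module categories as the new $0$-cell, build the splitting $1$-cell from the module-theoretic induction/restriction pair, and verify that the levelwise Artin--Wedderburn identifications satisfy the exchange relation. You have also correctly located the real work in the gluing step (what the paper calls the exchange relation for the isomorphism $\gamma^{(k)}$).

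Two points where the paper is more careful than your outline. First, the $2$-cells $m$ and $i$ in $\textbf{UC}$ are only \emph{eventually} compatible with the tower: the Q-system axioms hold for $k \geq l$ for some fixed $l$, not at every level. So your phrase ``at every finite stage'' is slightly off, and the paper has to patch the bottom of the new tower by hand (setting $B_k = S_l \cap C_k$ for $k < l$ and checking the boundary inclusion separately). Second, the paper does not build $X$ directly but factors it as $X_\bullet = \Lambda_\bullet \boxtimes F_\bullet$ through an intermediate $0$-cell $\Sigma_\bullet$ built from $\mathcal{R}_{A_k}$, where $F_k = \mathcal{M}_k(\Gamma_k\cdots\Gamma_1 m_0,\,\bullet)$ is an adjoint equivalence and $\Lambda_k = \bullet \underset{A_k}{\boxtimes} H_k$. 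This factorization cleanly separates the ``passage to module categories'' from the ``change of algebra along $A_k \hookrightarrow H_k$'', and in particular makes the unitary-separability of the dual and the exchange relations for the conjugate-equation solutions much easier to check than a direct construction would. Neither point is a gap in your reasoning, but both are where the bookkeeping you anticipate actually lives.
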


Given a $Q$-system in $\textbf{UC}$, to exhibit it's `splitting' one needs to construct a suitable $0$-cell and a suitable dualizable $1$-cell from the initial $0$-cell to the newly constructed one which enables the splitting. The idea to construct our suitable $0$-cell in $\textbf{UC}$ comes from \cite{CPJP} and we use subfactor theoretic ideas (\cite{B97}, \cite{EvKaw}, \cite{P89},\cite{P94}) to build our appropriate $1$-cell in $\textbf{UC}$.

There at least two natural questions appearing from our investigations. Bi-faithfulness of functors (that is, both the functor and its adjoint are faithful) plays a major role in achieving our results. So the first question is, if we drop the bi-faithfulness condition of $0$-cells and $1$-cells in $\textbf{UC}$ (see \Cref{UCdefn}), then will the modifed 2-category be still Q-system complete. Second, is $\textbf{UC}^\t{tr}$ Q-system complete ? We will try to answer these questions in our future work.

The outline of the paper is as follows. In Section 2, we will quickly go through some basic results and definitions and set up some pictorial notations. In Section 3, we explore $Q$-systems in $\textbf{UC}$ and prove some results that will be useful to construct our appropriate $0$-cell in $\textbf{UC}$. In Section 4, we build the $0$-cell and the dualizable $1$-cell and then proceed to prove our main theorem. 

\subsection*{Acknowledgements}
The author would like to thank Shamindra Kumar Ghosh, Corey Jones and David Penneys for several fruitful discussions.

\section{Preliminaries}

In this section we will furnish the necessary background on \textit{Q-system completion} and the 2-category of \textit{Unitary connections} \textbf{UC}.

\subsection{Notations related to 2-categories}\label{graphcalc}
We refer the reader to \cite{JY21} for basics of 2-categories.\\
Suppose $\mcal C$ is a 2-category and  $a,b \in \mcal C_0$ be two $0$-cells. A $1$-cell from $a  \xrightarrow{X} b$ is denoted by $_bX_a$. Pictorially, a $1$-cell will be denoted by a red strand and a $2$-cell will be denoted by a box with strings with passing through it. Suppose we have two $1$-cells $X, Y \in \mcal C_1(a,b)$ and $f \in \mcal C_2(X,Y)$ be a $2$-cell. Then we will denote $f$ as 
\raisebox{-11mm}{
\begin{tikzpicture}
\draw[red] (0,.8) to (0,-.8);
\node[draw,thick,rounded corners,fill=white, minimum width=20] at (0,0) {$f$};
\node[right] at (0,-.7) {$X$};
\node[right] at (0,.7) {$Y$};
\end{tikzpicture}}
We write tensor product $\boxtimes$ of $1$-cells from right to left $ _cY \us{b}\boxtimes X_a$. 
The notion of C*-2-categories is believed to first appear in \cite{LR}. For basics of C*/W*-2-categories we refer the reader to (\cite{CPJP}, \cite{GLR85}).

\subsection{Q-system completion}
\begin{defn}
	A pre-C*-2-category is a $2$-category such that the hom-1-categories satisfies all the conditions of a C*-category except that the $2$-cell spaces need not be complete with repsect to the given norm.
\end{defn}

Let $\mcal C$ be a pre-C*-2-category.
\begin{defn} \label{Qsysdefn}
	A Q-system in $\mcal C$ is a $1$-cell $_bQ_b \in \mcal C_1(b,b)$ along with multiplication map $m \in \mcal C_2(Q \boxtimes_b Q, Q)$ and unit map $i \in \mcal C_2(1_b,Q)$, as mentioned in section 1, satisfying the following properties:
	\end{defn}
	\begin{itemize}
		\item[(Q1)]
		 \raisebox{-6mm}{
			\begin{tikzpicture}
			\draw[red,in=90,out=90,looseness=2] (0,0) to (1,0);
			\draw[red,in=90,out=90,looseness=2] (0.5,.6) to (-.5,.6);
			\draw[red] (-.5,.6) to (-.5,0);
			\node at (.5,.6) {$\red{\bullet}$};
			\node at (0,1.2) {$\red{\bullet}$};
			\draw[red] (0,1.2) to (0,1.6);
			\end{tikzpicture}}
		= \raisebox{-6mm}{\begin{tikzpicture}
			\draw[red,in=90,out=90,looseness=2] (0,0) to (1,0);
			\draw[red,in=90,out=90,looseness=2] (.5,.6) to (1.5,.6);
			\node at (.5,.6) {$\red{\bullet}$};
			\draw[red] (1.5,.6) to (1.5,0);
			\node at (1,1.2) {$\red{\bullet}$};
			\draw[red] (1,1.2) to (1,1.6);
			\end{tikzpicture}} \ \ \ \ \ \ \t{(associativity)} 
		
		\item[(Q2)]
		\raisebox{-4mm}{
		\begin{tikzpicture}
		\draw[red,in=90,out=90,looseness=2] (0,0) to (1,0);
		\node at (.5,.6) {$\red{\bullet}$};
		\node at (0,0) {$\red{\bullet}$};
		\draw[red] (.5,.6) to (.5,1.2);
		\end{tikzpicture}} = 
	\raisebox{-4mm}{
	\begin{tikzpicture}
	\draw[red,in=90,out=90,looseness=2] (0,0) to (1,0);
	\node at (.5,.6) {$\red{\bullet}$};
	\node at (1,0) {$\red{\bullet}$};
	\draw[red] (.5,.6) to (.5,1.2);
	\end{tikzpicture}}
  = 
\raisebox{-3mm}{
\begin{tikzpicture}
\draw[red] (0,0) to (0,1.2);
\end{tikzpicture}} \ \ \ \ \ \ \t{(unitality)} 

\item[(Q3)]
\raisebox{-10mm}{
\begin{tikzpicture}
\draw[red,in=90,out=90,looseness=2] (0,0) to (1,0);
\draw[red,in=-90,out=-90,looseness=2] (1,0) to (2,0);
\node at (.5,.6) {$\red{\bullet}$};
\node at (1.5,-.6) {$\red{\bullet}$};
\draw[red] (.5,.6) to (.5,1);
\draw[red] (1.5,-.6) to (1.5,-1);
\draw[red] (0,0) to (0,-1);
\draw[red] (2,0) to (2,1);
\end{tikzpicture}} \ =
\raisebox{-10mm}{
\begin{tikzpicture}
\draw[red,in=90,out=90,looseness=2] (0,0) to (1,0);
\node at (.5,.6) {$\red{\bullet}$};
\draw[red] (.5,.6) to (.5,1.4);
\draw[red,in=-90,out=-90,looseness=2] (0,2) to (1,2);
\node at (.5,1.4) {$\red{\bullet}$};
\end{tikzpicture}} =
\raisebox{-10mm}{
\begin{tikzpicture}
\draw[red,in=-90,out=-90,looseness=2] (0,0) to (1,0);
\draw[red,in=90,out=90,looseness=2] (1,0) to (2,0);
\node at (.5,-.6) {$\red{\bullet}$};
\node at (1.5,.6) {$\red{\bullet}$};
\draw[red] (.5,-.6) to (.5,-1);
\draw[red] (1.5,.6) to (1.5,1);
\draw[red] (0,0) to (0,1);
\draw[red] (2,0) to (2,-1);
\end{tikzpicture}} \ \ \ \ \ \ \t{(Frobenius condition)} 

\item[(Q4)]
 \raisebox{-10mm}{
\begin{tikzpicture}
\draw[red,in=90,out=90,looseness=2] (0,0) to (1,0);
\draw[red,in=-90,out=-90,looseness=2] (0,0) to (1,0);
\node at (.5,-.6) {$\red{\bullet}$};
\node at (.5,.6) {$\red{\bullet}$};
\draw[red] (.5,-.6) to (.5,-1.2);
\draw[red] (.5,.6) to (.5,1.2);
\end{tikzpicture}} = 
\raisebox{-10mm}{
\begin{tikzpicture}
\draw[red] (0,0) to (0,2.4);
\end{tikzpicture}} \ \ \ \ \ \ \t{(Separability)}

\end{itemize}

\begin{defn}\cite{CPJP}
	Given a Q-system $\left(Q,m,i\right)$, we define
	\[ d_Q \coloneqq 
	\raisebox{-5mm}{
	\begin{tikzpicture}
	\draw[red] (0,0) to (0,.8);
	\node at (0,0) {$\red{\bullet}$};
	\node at (0,.8) {$\red{\bullet}$};
	\end{tikzpicture}} \in \t{End}_{\mcal C}\left(1_b \right)^+ \].
\begin{itemize}
	\item If $d_Q$ is invertible, we call $Q$ \textit{non-degenerate} or an \textit{extension} of $1_b$. 
	\item If $d_Q$ is an idempotent, we call $Q$ a \textit{summand} of $1_b$.
\end{itemize}
 \end{defn}
We recall some facts about $Q$-systems in C*-tensor categories already mentioned in (\cite{CPJP}, \cite{Z07}).
\begin{fact}
	\begin{itemize}
		\item[(F1)]\label{F1} Q is a self-dual 1-cell with $ev_Q \coloneqq 
		\raisebox{-4mm}{
		\begin{tikzpicture}[scale=.8]
		\draw[red,in=-90,out=-90,looseness=2] (-0.5,0.5) to (-1.5,0.5);
		\node at (-1,-.1) {${\color{red}\bullet}$};
		\draw[red] (-1,-.1) to (-1,-.6);
		\node[left,scale=0.7] at (-1,-.4) {$Q$};
		\node[left,scale=0.7] at (-1.6,0.4) {$Q$};
		\node[right,scale=0.7] at (-.5,.4) {$Q$};
		\node at (-1,-.6) {${\color{red}\bullet}$};
		\end{tikzpicture}}
		$ and $coev_Q \coloneqq \raisebox{-4mm}{
			\begin{tikzpicture}[scale=.8]
			\draw[red,in=90,out=90,looseness=2] (-0.5,0.5) to (-1.5,0.5);
			\node at (-1,1.1) {${\color{red}\bullet}$};
			\draw[red] (-1,1.1) to (-1,1.6);
			\node[left,scale=0.7] at (-1,1.4) {$Q$};
			\node[left,scale=0.7] at (-1.6,0.6) {$Q$};
			\node[right,scale=0.7] at (-.5,.6) {$Q$};
			\node at (-1,1.6) {${\color{red}\bullet}$};
			\end{tikzpicture}}$.
		\item[(F2)] Using (F1) and [\cite{Z07} lemma 1.16] we have the following inequalitites:
		\[ 
		\raisebox{-8mm}{
		\begin{tikzpicture}[scale=.8]
		\draw[red,in=-90,out=-90,looseness=2] (-0.5,0.5) to (-1.5,0.5);
		\node at (-1,-.1) {${\color{red}\bullet}$};
		\draw[red] (-1,-.1) to (-1,-.6);
		\node[left,scale=0.7] at (-1,-.4) {$Q$};
		\node[left,scale=0.7] at (-1.6,0.4) {$Q$};
		\node[right,scale=0.7] at (-.5,.4) {$Q$};
		\node at (-1,-.6) {${\color{red}\bullet}$};
		\draw[red,in=90,out=90,looseness=2] (-0.5,-1.9) to (-1.5,-1.9);
		\node at (-1,-1.3) {${\color{red}\bullet}$};
		\draw[red] (-1,-1.3) to (-1,-.9);
		\node[left,scale=0.7] at (-1,-1.1) {$Q$};
		\node[left,scale=0.7] at (-1.5,-1.8) {$Q$};
		\node[right,scale=0.7] at (-.5,-1.8) {$Q$};
		\node at (-1,-.9) {${\color{red}\bullet}$};
		\end{tikzpicture}} \ \leq \  
		\raisebox{-8mm}{
		\begin{tikzpicture}
		\draw[red] (.2,.4) to (.2,-.4);
		\draw[red] (.4,1) to (.4,-1);
		\draw[red] (.9,1) to (.9,-1);
		\node at (.2,.4) {${\color{red}\bullet}$};
		\node at (.2,-.4) {${\color{red}\bullet}$};
		\end{tikzpicture}} \ \leq \ 
	\raisebox{-8mm}{
	\begin{tikzpicture}
	\draw[red] (.4,1) to (.4,-1);
	\draw[red] (.9,1) to (.9,-1);
	\node at (-.1,-.1) {$\norm{d_Q}$};
	\end{tikzpicture}} \]

\item[(F3)] By [\cite{Z07} corollary 1.19] either $d_Q$ is invertible, or zero is an isolated point in Spec$\left(d_Q \right)$.
Define , $f : \t{Spec}(d_Q) \to \C$ by   
\[f(x)= \begin{cases}
0 & x=0 \\
x^{-1} & x \neq 0
\end{cases}\]
By abuse of notation, set $d_Q^{-1} \coloneqq f\left(d_Q\right)$. By continuous functional calculus, set $s_Q \coloneqq d_Q d_Q^{-1}$. Then we have the following :
\begin{itemize}
	\item[(a)] \[ \begin{tikzpicture}
	\draw[red] (0,0) to (0,1);
	\draw[red] (.2,.2) to (.2,.8);
	\node[draw,thick,rounded corners,scale=.8] at (.8,.5) {$d_Q^{-1}$};
	\node at (.2,.8) {$\red{\bullet}$};
	\node at (.2,.2) {$\red{\bullet}$};
	\end{tikzpicture} \  \
	\begin{tikzpicture}
	\draw[red] (0,0) to (0,1);
	\node[draw,thick,rounded corners,scale=.8] at (.4,.5) {$s_Q$};
	\node at (.9,.5) {$=$};
	\node at (-.3,.5) {$=$};
	\draw[red] (1.2,0) to (1.2,1);
	\end{tikzpicture} \]
\item[(b)] \[ \begin{tikzpicture}
\draw[red] (0,0) to (0,1);
\draw[red] (.2,.2) to (.2,.8);
\node at (.2,.8) {$\red{\bullet}$};
\node at (.2,.2) {$\red{\bullet}$};
\node at (.6,.5) {$\leq$};
\draw[red] (1.7,0) to (1.7,1);
\node at (1.2,.5) {$\norm{d_Q}$};
\end{tikzpicture} \]	
	 
\end{itemize}

	\end{itemize}
\end{fact}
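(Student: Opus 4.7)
My plan is to handle (F1)--(F3) in turn, each as a short diagrammatic argument combined with the Q-system axioms (Q1)--(Q4) and the two cited results of Zito.

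For (F1), I verify the two snake identities directly. Reading off the pictures, $ev_Q = i^{*}\circ m$ and $coev_Q = m^{*}\circ i$. The composite
\[
(\text{id}_Q \boxtimes ev_Q)\circ(coev_Q \boxtimes \text{id}_Q) \;=\; (\text{id}_Q \boxtimes i^{*})\circ(\text{id}_Q \boxtimes m)\circ(m^{*}\boxtimes\text{id}_Q)\circ(i\boxtimes\text{id}_Q)
\]
simplifies first by using the Frobenius condition (Q3) to rewrite the middle two factors as $m^{*}\circ m$, and then by two applications of unitality (Q2) in the forms $m\circ(i\boxtimes\text{id}_Q)=\text{id}_Q$ and its adjoint $(\text{id}_Q\boxtimes i^{*})\circ m^{*}=\text{id}_Q$, collapsing everything to $\text{id}_Q$. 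The mirror snake is symmetric. This is the standard pictorial argument that every Frobenius algebra is self-dual.

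For (F2), by (F1) the leftmost diagram is the positive endomorphism $coev_Q\circ ev_Q = ev_Q^{*}\circ ev_Q \in \text{End}(Q\boxtimes Q)^{+}$ (using $ev_Q^{*}=coev_Q$). The first inequality is then [\cite{Z07} lemma 1.16] applied to this Q-system, bounding $ev_Q^{*} ev_Q$ above by $d_Q$ whiskered with $\text{id}_{Q\boxtimes Q}$, which is the middle diagram. The second inequality is the elementary C*-algebraic bound $d_Q \le \|d_Q\|\cdot\text{id}_{1_b}$ in $\text{End}(1_b)^{+}$, whiskered by $\text{id}_{Q\boxtimes Q}$.

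For (F3), the dichotomy that either $d_Q$ is invertible or $0$ is an isolated point of $\text{Spec}(d_Q)$ is [\cite{Z07} corollary 1.19]. Granted this, $f$ is continuous on $\text{Spec}(d_Q)$, so continuous functional calculus in the C*-algebra $\text{End}(1_b)$ produces $d_Q^{-1} \coloneqq f(d_Q)$, and $s_Q = d_Q\,d_Q^{-1}$ is the support projection of $d_Q$. The left-hand diagram of (a) unfolds to $\text{id}_Q \boxtimes (d_Q\, d_Q^{-1}) = \text{id}_Q \boxtimes s_Q$, which is the middle diagram. To obtain the equality with $\text{id}_Q$, I would first show that $p \coloneqq \text{id}_{1_b} - s_Q$ satisfies $p\circ i^{*}=0$ via the C*-identity $(p\,i^{*})(p\,i^{*})^{*} = p(i^{*}i)p = p\, d_Q\, p = 0$. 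This gives $p\circ ev_Q = p\circ i^{*}\circ m = 0$, so $s_Q\circ ev_Q = ev_Q$; inserting this into the snake identity from (F1) yields
\[
\text{id}_Q \;=\; (\text{id}_Q\boxtimes ev_Q)\circ(coev_Q\boxtimes\text{id}_Q) \;=\; (\text{id}_Q\boxtimes s_Q)\circ(\text{id}_Q\boxtimes ev_Q)\circ(coev_Q\boxtimes\text{id}_Q) \;=\; \text{id}_Q\boxtimes s_Q.
\]
Part (b) is once again the bound $d_Q\le\|d_Q\|\cdot\text{id}_{1_b}$ whiskered by $\text{id}_Q$.

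The only non-routine inputs are the two external results of Zito, which are the real technical content; the reductions of (F1) to the Frobenius and unit axioms, and of (F3)(a) to (F1) combined with C*-positivity, are otherwise straightforward.
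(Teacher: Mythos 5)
The paper states this Fact without proof, deferring entirely to \cite{CPJP} and \cite{Z07}, and your argument correctly reproduces the standard reasoning behind it: the Frobenius-plus-unitality snake computation for (F1), the cited Zito inequality together with the elementary bound $d_Q \le \norm{d_Q}\,\t{id}_{1_b}$ for (F2), and the support-projection argument $p\,d_Q\,p = 0 \Rightarrow p \circ i^* = 0$ feeding back into the snake identity for (F3)(a). There is no gap and no substantive divergence from the (cited) approach to report.
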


\begin{defn}
	Suppose $\mcal C$ is a pre-C*-2-category and $_bX_a \in \mcal C_1(a,b)$. A \textit{unitarily separable left dual} for $_b X_a$ is a dual $\left( _a \ol X_b, ev_X, coev_X \right)$ such that $ev_X \circ ev_X^* = \t{id}_{1_a}$ [cf. Example 3.9 \cite{CPJP}].
\end{defn}
Given a unitarily separable left dual for $_bX_a \in \mcal C_1(a,b)$, $_b X \us{a} \boxtimes \ol X_b \in \mcal C_1(b,b)$ is a Q-system with multiplication map $m \coloneqq \t{id}_X \boxtimes ev_X \boxtimes \t{id}_{\ol X}$ and unit map $i \coloneqq coev_X$.

Given a Q-system $Q \in \mcal C_1(b,b)$, if it is of the above form then we say that the Q-system $Q$ `splits'.

\begin{defn}\label{QsyspreC*def}
	A pre-C*-2-category $\mcal C$ is said to be \textit{Q-system complete} if every Q-system in $\mcal C$ `splits', that is, given a  Q-system $Q \in \mcal C_1(b,b)$ there is an object $c \in \mcal C_0$ and a dualizable 1-cell $X \in \mcal C_1(c,b)$ which admits a unitary separable dual $\left(\ol X, ev_X, coev_X \right)$ such that $\left(Q,m,i\right)$ is isomorphic to $_b X \us{c} \boxtimes \ol X_b$ as Q-systems.
	
\end{defn}

\begin{rem}
	In \cite{CPJP}, Q-system completion has been treated in the context of C*/W*-2-categories. It has been proved that \Cref{QsyspreC*def} is equivalent to their definition of \textit{Q-system completeness} (see Thm 3.36 \cite{CPJP}) of C*/W*-2-categories. 
\end{rem}

\subsection{Unitary connections}

\textbf{Pictorial notations.} We will apply the graphical calculus as mentioned in \Cref{graphcalc} to the 2-category of \textit{Categories}. 
 \begin{itemize}\label{Graphcalc}
 	\item[(i)] Let $\mcal C$ be a category and let $ f \in \mcal C(C , D)$. It will be denoted by
 \raisebox{-8mm}{
 	\begin{tikzpicture}
 	\node[draw,thick,rounded corners, fill=white] at (0,0) {$f$};
 	\begin{scope}[on background layer]
 	\draw[dashed,thick] (0,-.9) to (0,.9);
 	\end{scope}	
 	\node[right] at (-0.1,0.6) {$D$};
 	\node[right] at (-0.1,-0.6) {$C$};
 	\end{tikzpicture}
 } and composition of two morphisms will be represented by two vertically stacked labelled boxes. 
 
 \item[(ii)] Let $\mcal C$ and $\mcal D$ be two categories and $F,G: \mcal C \ra \mcal D$ be two functors.
 Then a natural transformation $\eta:F \ra G$ will be denoted by 
 \raisebox{-8mm}{
 	\begin{tikzpicture}
 	\node[draw,thick,rounded corners,fill = white] at (0,0) {$\eta$};
 	\begin{scope}[on background layer]
 	\draw (0,-.8) to (0,.8);
 	\end{scope}
 	\node[right] at (-0.1,0.6) {$G$};
 	\node[right] at (-0.1,-0.6) {$F$};
 	\end{tikzpicture}
 }.
 For an object $x$ in $\mcal C$, the morphism $\eta_{x} :Fx \ra Gx$ will be denoted by
 \raisebox{-8mm}{
 	\begin{tikzpicture}
 	\node[draw,thick,rounded corners,fill=white] at (0,0) {$\eta_{x}$};
 	\begin{scope}[on background layer]
 	\draw (-.2,-.8) to (-.2,.8);
 	\draw[thick,dashed] (.2,-.8) to (.2,.8);
 	\end{scope}
 	\node[left] at (-0.07,0.6) {$G$};
 	\node[left] at (-0.07,-0.6) {$F$};
 	\node[right] at (0.07,0.6) {$x$};
 	\node[right] at (0.07,-0.6) {$x$};
 	\node[left=] at (1,0) {$ = $};
 	\node[draw,thick,rounded corners,fill=white] at (1.3,0) {$\eta$};
 	\begin{scope}[on background layer]
 	\draw (1.3,-.8) to (1.3,.8);
 	\end{scope}
 	\node[right] at (.8,0.6) {$G$};
 	\node[right] at (.8,-0.6) {$F$};
 	\draw[thick,dashed] (1.7,-0.8) to (1.7,0.8);
 	\node[right] at (1.6,0) {$x$};
 	\end{tikzpicture}
 }.
 
 \item[(iii)] For a $ * $-linear functor $ F: \mcal C \ra \mcal D$ between two semisimple C*-categories categories, we will denote a solution to conjugate equation by
 \[
 \rho = \raisebox{-2.5mm}{
 	\begin{tikzpicture}
 	\draw (-.6,.3) to[out=-90,in=180] (0,-.3) to[out=0,in=-90] (.6,.3);
 	\node[right] at (0.4,0.1) {$F'$};
 	\node[left] at (-0.5,0.1) {$F$};
 	\end{tikzpicture}
 }\!\!\! : \t{id}_{\mcal D} \lra FF' \ \ \t{ and } \ \ 
 \rho' = \raisebox{-2.5mm}{
 	\begin{tikzpicture}
 	\draw (-.6,.3) to[out=-90,in=180] (0,-.3) to[out=0,in=-90] (.6,.3);
 	\node[right] at (0.4,0.1) {$F$};
 	\node[left] at (-0.5,0.1) {$F'$};
 	\end{tikzpicture}
 }\!\!\! : \t{id}_{\mcal C} \lra F 'F 
 \]
 \[
 \rho^* = \raisebox{-2.5mm}{
 	\begin{tikzpicture}
 	\draw (-.6,-.3) to[out=90,in=180] (0,.3) to[out=0,in=90] (.6,-.3);
 	\node[right] at (0.5,-0.1) {$F'$};
 	\node[left] at (-0.45,-0.1) {$F$};
 	\end{tikzpicture}
 }\!\!\! : FF' \lra \t{id}_{\mcal D} \ \ \t{ and } \ \ 
 \left[\rho' \right]^* = \raisebox{-2.5mm}{
 	\begin{tikzpicture}
 	\draw (-.6,-.3) to[out=90,in=180] (0,.3) to[out=0,in=90] (.6,-.3);
 	\node[right] at (0.5,-0.1) {$F$};
 	\node[left] at (-0.4,-0.1) {$F'$};
 	\end{tikzpicture}
 }\!\!\! : F 'F \lra  \t{id}_{\mcal C}
 \]
 where $ F' : \mcal D \ra \mcal C $ is an adjoint functor of $ F $.
\end{itemize}

\vspace*{4mm}

 We will extend the above dictionary (between things appearing in the category world and pictures) in an obvious way.
 For instance, composition of morphisms and natural transformations will be pictorially represented by stacking the boxes vertically whereas tensor product (resp., composition) of objects (resp., functors) by parallel vertical strings.
 For simplicity, sometimes we will not label all of the strings (with any object or functor) emanating from a box (labelled with a morphism or a natural transformation) when it can be read off from the context.
 To distinguish between a functor arising in $0$-cell and a functor arising in $1$-cell, we will denote the former by a black strand and the latter by a red strand unless otherwise mentioned. 
  
  \vspace*{2mm}
 
Let us recall the definition of the pre-C*-2-category of \textit{unitary connections} \textbf{UC} described in \cite{DGGJ} 
\begin{defn}\label{UCdefn}
	The 2-category \textbf{UC} consists of the following :
	\begin{itemize}
		\item[(1)] 0-cells are $ * $-linear, bi-faithful functors $ \Gamma_k : \mcal M_{k-1} \ra \mcal M_k $ (where $ \mcal M_k $ is a finite, semisimple, C*-category whose isomorphism classes of the simple objects are indexed by the vertex set $ V_{\mcal M_k} $).
		We will denote such a $ 0 $-cell by $ \left\{ \mcal M_{k-1}  \os {\displaystyle \Gamma_k} \lra \mcal M_k \right\}_{k\geq 1}$ or sometimes simply $ \Gamma_\bullet $.
		
		\item[(2)] A $ 1 $-cell from the $ 0 $-cell $ \left\{\mcal M_{k-1}  \os{\displaystyle \Gamma_k}\lra \mcal M_k \right\}_{k\geq 1}$ to the $ 0 $-cell $ \left\{\mcal N_{k-1} \os{\displaystyle \Delta_k}\lra \mcal N_k \right\}_{k\geq 1}$ consists of a sequence of $ * $-linear bi-faithful functors $ \left\{\Lambda_k : \mcal M_k \ra \mcal N_k\right\}_{k\geq 0} $ and natural unitaries $ W_k: \Delta_k \Lambda_{k-1} \ra \Lambda_k \Gamma_k $ for $ k\geq 1 $. Such a $ 1 $-cell will be denoted by $ \left(\Lambda_\bullet, W_\bullet \right)$ or simply by $ \Lambda_\bullet $, and $ W_\bullet $ will be referred as a \textit{unitary connection associated to $ \Lambda_\bullet $}.
		Denote the set of $ 1 $-cells from $ \Gamma_\bullet$ to $\Delta_\bullet $ by $ \textbf{UC}_1 \left(\Gamma_\bullet , \Delta_\bullet\right) $.
		
		Pictorially, the natural unitary $ W_k $ appearing in the $ 1 $-cell will be represented by
		\raisebox{-5mm}{
			\begin{tikzpicture}
			\draw[white,line width=1mm,out=-90,in=90] (-.25,.5) to (.25,-.5);
			\draw[out=-90,in=90,red] (-.25,.5) to (.25,-.5);
			\begin{scope}[on background layer]
			\draw[out=90,in=-90] (-.25,-.5) to (.25,.5);
			\end{scope}
			\node[right] at (0.15,-0.4) {$\Lambda_{k-1}$};
			\node[right] at (0.15,0.3) {$\Gamma_k$};
			\node[left] at (-0.15,-0.4) {$\Delta_k$};
			\node[left] at (-0.15,0.3) {$\Lambda_k$};
			\end{tikzpicture}
		}
		and $ W^*_k $ by
		\raisebox{-5mm}{
			\begin{tikzpicture}
			\draw[white,line width=1mm,out=-90,in=90] (.25,.5) to (-.25,-.5);
			\draw[out=-90,in=90,red] (.25,.5) to (-.25,-.5);
			\begin{scope}[on background layer]
			\draw[out=90,in=-90] (.25,-.5) to (-.25,.5);
			\end{scope}
			\node[right] at (0.15,-0.4) {$\Gamma_k$};
			\node[right] at (0.15,0.3) {$\Lambda_{k-1}$};
			\node[left] at (-0.15,-0.4) {$\Lambda_k$};
			\node[left] at (-0.15,0.3) {$\Delta_k$};
			\end{tikzpicture}
		}
		\vspace*{2mm}
		
		\item[(3)] Let $ \Lambda_\bullet  , \Omega_\bullet \in \textbf{UC}_1  \left(\Gamma_\bullet , \Delta_\bullet\right)$. For describing 2-cells we need the following definition:
		 \begin{defn}\label{XreldefpreC*}
			A pair $ (\eta , \kappa) \in \t{NT}(\Lambda_k , \Omega_k) \times \t{NT}(\Lambda_{k+1} , \Omega_{k+1}) $ is said to satisfy \textit{exchange relation} if the condition
			\raisebox{-1cm}{
				\begin{tikzpicture}
				\draw[in=-90,out=90] (0,0) to (0,1.2);
				\draw[in=-90,out=90] (0,1.2) to (0.6,2);
				\draw[in=-90,out=90] (3.4,1) to (3.4,2);
				\draw[in=-90,out=90] (2.6,0) to (3.4,1);
				\draw[red,in=-90,out=90] (.6,0) to (0.6,1.2);
				\draw[white,line width=1mm,in=-90,out=90] (0.6,1.2) to (0,2);
				\draw[red,in=-90,out=90] (0.6,1.2) to (0,2);
				\draw[red,in=-90,out=90] (2.6,1) to (2.6,2);
				\draw[white,line width=1mm,in=-90,out=90] (3.4,0) to (2.6,1);
				\draw[red,in=-90,out=90] (3.4,0) to (2.6,1);
				\node[draw,thick,rounded corners, fill=white] at (0.6,0.8) {$\eta$};
				\node[draw,thick,rounded corners, fill=white] at (2.6,1.4) {$\kappa$};
				\node[right] at (.5,0.2) {$ \Lambda_k $};
				\node[right] at (.5,1.35) {$\Omega_k $};
				\node[left] at (0.2,1.8) {$\Omega_{k+1} $};
				\node[right] at (3.3,0.2) {$ \Lambda_k $};
				\node[left] at (2.8,.75) {$\Lambda_{k+1} $};
				\node[left] at (2.75,1.85) {$\Omega_{k+1} $};
				\node at (1.5,1) {$ = $};
				\end{tikzpicture}
			}
			holds.
		\end{defn}
		\begin{rem}\label{xrelunique}
			The exchange relation pair is unique separately in each variable, that is, if $ (\eta, \kappa_1) $ and $ (\eta , \kappa_2) $ (resp., $ (\eta_1, \kappa) $ and $ (\eta_2 , \kappa) $) both satisfy exchange relation, then $ \kappa_1 = \kappa_2 $ (resp., $ \eta_1 = \eta_2 $); this is because the connections are unitary and the functors $ \Gamma_k $ and $ \Delta_k $ are bi-faithful.
		\end{rem}
		Let $ \text{Ex}(\Lambda_{\bullet}, \Omega_{\bullet})$ denote the space of sequences  $\{\eta^{(k)} \in \t{NT} \left(\Lambda_k , \Omega_k\right)\}_{k\geq 0}$ such that there exists an $N$ such that $(\eta_{k},\eta_{k+1})$ satifies the exchange relation for all $k\ge N$.
		Consider the subspace
		$$
		\text{Ex}_{0}(\Lambda_{\bullet}, \Omega_{\bullet}):=\left\{\{\eta_{k}\}_{k\geq 0}  \in \text{Ex}(\Lambda_{\bullet}, \Omega_{\bullet}) :\ \eta_{k}=0 \ \text{ for all }\ k\ge N \  \t{  for some }  \ N \in \N  \right\}
		$$
			
			We define the space of $ 2 $-cells $$\textbf{UC}_2 \left(\Lambda_\bullet , \Omega_\bullet \right):=  \frac {\text{Ex}(\Lambda_{\bullet}, \Omega_{\bullet})}  {\text{Ex}_{0}(\Lambda_{\bullet}, \Omega_{\bullet})} $$
			
		\item[(4)] 
		For $\Omega_\bullet\in \textbf{UC}_1 \left(\Delta_\bullet ,\Sigma_\bullet\right) $ and $\Lambda_\bullet  \in  \textbf{UC}_1 \left(\Gamma_\bullet ,\Delta_\bullet \right) $, define
		\begin{equation}\label{tensor1cell}
		\Omega_\bullet \boxtimes \Lambda_\bullet \coloneqq \left( \left\{\Omega_k \, \Lambda_k\right\}_{k\geq 0} \, , \,  \left\{\raisebox{-1.35cm}{
			\begin{tikzpicture}
			\draw[in=-90,out=90] (0,0) to (1,2);
			\draw[white,line width=1mm,in=-90,out=90] (0.5,0) to (0,2);
			\draw[red,in=-90,out=90] (0.5,0) to (0,2);
			\draw[white,line width=1mm,in=-90,out=90] (1,0) to (0.5,2);
			\draw[red,in=-90,out=90] (1,0) to (0.5,2);
			\node[left] at (0.1,0.1) {$ \Sigma_k $};
			\node[left] at (1.2,-0.1) {$\Omega_{k-1} $};
			\node[right] at (.9,0.1) {$ \Lambda_{k-1} $};
			\node[right] at (.9,1.8) {$ \Gamma_k$};
			\node[right] at (.2,2.15) {$\Lambda_k$};
			\node[left] at (.1,1.8) {$ \Omega_k$};
			\end{tikzpicture}
		}\right\}_{k\geq 1} \right) \ .
		\end{equation}
		\end{itemize}

		For notational convenience, instead of denoting a $ 2 $-cell by an equivalence class of sequences, we simply use a sequence in the class and truncate upto a level after which the exchange relation holds for every consecutive pair, namely, $ \left\{ \eta^{(k)}\right\}_{k \geq N} \in \textbf{UC}_2 \left(\Lambda_\bullet , \Omega_\bullet \right) $ where $(\eta^{(k)},\eta^{(k+1)})$ satifies the exchange relation for all $k\ge N$.
		
		\begin{rem}\label{UCisomorphism}
			From the definition of $\textbf{UC}_2 \left( \Lambda_\bullet, \Omega_\bullet \right)$, we observe that two $2$-cells $ \left\{ \eta^{(k)}\right\}_{k \geq N}$, $\left\{ \tau^{(k)}\right\}_{k \geq L}  \in \textbf{UC}_2 \left(\Lambda_\bullet , \Omega_\bullet \right) $ are equal if and only if $\eta^{(k)} = \tau^{(k)}$ eventually. So, two $1$-cells $\Lambda_\bullet$ and $\Omega_\bullet$ are isomorphic in $\textbf{UC}$ if there is a sequence of natural transformations $U_k : \Lambda_k \to \Omega_k $ which satisfies exchange relation from some level $l$ and which implements isomorphism between $\Lambda_k$ and $\Omega_k$ eventually.
		\end{rem}
\end{defn}	
For horizontal and vertical composition of 2-cells we refer the reader to \cite{DGGJ}.

\vspace*{4mm}

Given a $ 0 $-cell $\Gamma_\bullet \in \textbf{UC}_0$, we fix an object $ m_0 \coloneqq \us{v \in V_0} \bigoplus v \in \t{ob} (\mcal M_0)$.
Consider the sequence of finite dimensional C*-algebras $ \left\{ A_k \coloneqq \t{End} (\Gamma_k \cdots \Gamma_1 m_0) \right\}_{k\geq 0}$ (assuming $ A_0 = \t{End}(m_0) $) along with the unital $ * $-algebra inclusions given by 
\begin{equation}\label{Akinclusion}
 A_{k-1} \ni \alpha \hookrightarrow \Gamma_k \, \alpha \in A_{k} \ .
 \end{equation}
Indeed, the Bratteli diagram of $ A_{k-1} $ inside $ A_{k} $ is given by the graph $ \Gamma_k $.
To the $ 0 $-cell $ \Gamma_\bullet $, we associate the $ * $-algebra $ A_\infty \coloneqq \us{k\geq 0}{\cup} A_k $ \vspace*{2mm}

To each $ 1 $-cell $ (\Lambda_\bullet , W_\bullet) \in \textbf{UC}_1 \left(\Gamma_\bullet, \Delta_\bullet \right)$, we will associate an $ A_\infty $-$ B_\infty $ right correspondence where $ n_0 $ and $ B_k $'s are related to $ \left\{\mcal N_{k-1} \os{\displaystyle \Delta_k}\lra \mcal N_k \right\}_{k\geq 1}$ exactly the way $ m_0 $ and $ A_k $'s are related to $ \left\{ \mcal M_{k-1} \os{\displaystyle \Gamma_k}\lra \mcal M_k \right\}_{k\geq 1}$ respectively.
For $ k\geq 0 $, set $ H_k \coloneqq \mcal N_k \left(\Delta_k \cdots \Delta_1 n_0 , \Lambda_k \Gamma_k \cdots \Gamma_1 m_0\right) $.
We have an obvious $ A_k $-$ B_k $-bimodule structure on $ H_k $ in the following way:

\begin{equation}\label{Hkbimod}
A_k \times H_k \times B_k \ni (\alpha , \xi , \beta) \longmapsto \Lambda_k (\alpha) \circ \xi \circ \beta \ \in H_k \ .
\end{equation}
Again, there is a $ B_k $-valued inner product on $ H_k $ given by

\begin{equation}\label{Hkinp}
H_k \times H_k \ni (\xi,\zeta ) \os{\displaystyle \left\langle \cdot , \cdot \right \rangle_{B_k}} \longmapsto \left\langle \xi , \zeta  \right \rangle_{B_k} \coloneqq \zeta^* \circ \xi \ \in B_k \ .
\end{equation}

Next, observe that $ H_k $ sits inside $ H_{k+1} $ via the map
\begin{equation}\label{Hkinclusion}
H_k \ni \xi   \xmapsto {I_{k+1}} \left[\left(W_{k+1}\right)_{\Gamma_k \cdots \Gamma_1 m_0}\right] \circ \left[\Delta_{k+1} \xi \right] \ =
\raisebox{-.9cm}{
	\begin{tikzpicture}
	\draw[thick,dashed] (.5,-.7) to (.5,1);
	\draw[in=-90,out=90] (-.9,-.7) to (-.9,.3) to (-.6,1);
	\draw[in=-90,out=90] (-.3,-.7)to (-.3,1);
	\draw[in=-90,out=90] (.3,-.7)to (.3,1);
	\draw[white,line width=1mm,out=-90,in=90] (-.6,0.3) to (-.9,1);
	\draw[red,in=-90,out=90] (-.6,0.3) to (-.9,1);
	\node[draw,thick,rounded corners, fill=white,minimum width=45] at (0,0) {$\xi$};
	\node[right] at (-.4,.6) {$ \cdots$};
	\node[right] at (-.4,-.6) {$ \cdots$};
	\node[left] at (-.7,.8) {$ \Lambda_{k+1}$};
	\node[left] at (-.8,-.3) {$ \Delta_{k+1}$};
	\node[right] at (.4,-.6) {$ n_0$};
	\node[right] at (.4,.6) {$ m_0$};
	\end{tikzpicture}
} \in H_{k+1} \ .
\end{equation}
\begin{lem}(\cite{DGGJ}) \label{inclusionactioncompatible}
	The inclusions $ H_k \hookrightarrow H_{k+1} $, $ A_k \hookrightarrow A_{k+1} $, $ B_k \hookrightarrow B_{k+1} $ and the corresponding actions are compatible in the obvious sense.
\end{lem}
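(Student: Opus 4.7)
\bigskip

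\noindent\textbf{Proof proposal.} The content of the lemma is that the inclusion map $I_{k+1}: H_k \hookrightarrow H_{k+1}$ of \eqref{Hkinclusion} is compatible with the algebra inclusions $A_k \hookrightarrow A_{k+1}$ and $B_k \hookrightarrow B_{k+1}$ of \eqref{Akinclusion} and with the $B_k$-valued inner product of \eqref{Hkinp}. More precisely, I plan to verify the following three identities, which together constitute the "obvious compatibility":
\begin{enumerate}
\item[(a)] $I_{k+1}(\alpha \cdot \xi) = \Gamma_{k+1}(\alpha) \cdot I_{k+1}(\xi)$ for all $\alpha \in A_k$, $\xi \in H_k$;
\item[(b)] $I_{k+1}(\xi \cdot \beta) = I_{k+1}(\xi) \cdot \Delta_{k+1}(\beta)$ for all $\xi \in H_k$, $\beta \in B_k$;
\item[(c)] $\langle I_{k+1}\xi, I_{k+1}\zeta\rangle_{B_{k+1}} = \Delta_{k+1}\bigl(\langle \xi, \zeta\rangle_{B_k}\bigr)$ for all $\xi, \zeta \in H_k$.
\end{enumerate}
Each identity should drop out by unpacking the definitions and invoking a single structural fact about $W_{k+1}$, so I do not expect any genuine difficulty; the argument is essentially a bookkeeping exercise in the graphical calculus of \Cref{Graphcalc}.

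For (a), the first step is to expand $I_{k+1}(\Lambda_k(\alpha)\circ \xi)$ as $\bigl[(W_{k+1})_{\Gamma_k\cdots\Gamma_1 m_0}\bigr]\circ \Delta_{k+1}(\Lambda_k(\alpha)\circ \xi)$ and to split $\Delta_{k+1}$ over the composition using functoriality. The key input is then the naturality of the unitary $W_{k+1}:\Delta_{k+1}\Lambda_k \to \Lambda_{k+1}\Gamma_{k+1}$, applied to the morphism $\alpha: \Gamma_k\cdots\Gamma_1 m_0 \to \Gamma_k\cdots\Gamma_1 m_0$; pictorially, one slides the box labelled $\alpha$ across the $W_{k+1}$ crossing. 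This converts the left hand side to $\Lambda_{k+1}\Gamma_{k+1}(\alpha)\circ I_{k+1}(\xi)$, which is precisely the right hand side once one recalls that the inclusion $A_k \hookrightarrow A_{k+1}$ is $\alpha \mapsto \Gamma_{k+1}\alpha$ and that the left $A_{k+1}$-action on $H_{k+1}$ is via $\Lambda_{k+1}$. Identity (b) is easier still: only functoriality of $\Delta_{k+1}$ is needed, since $\beta$ acts on the right end of $\xi$ which is not touched by $W_{k+1}$.

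For (c), I would write out
\[
\langle I_{k+1}\xi, I_{k+1}\zeta \rangle_{B_{k+1}} \;=\; \bigl(W_{k+1}\circ \Delta_{k+1}\zeta\bigr)^* \circ \bigl(W_{k+1}\circ \Delta_{k+1}\xi\bigr) \;=\; \Delta_{k+1}(\zeta^*) \circ W_{k+1}^* W_{k+1} \circ \Delta_{k+1}(\xi),
\]
and then apply unitarity $W_{k+1}^* W_{k+1} = \mathrm{id}_{\Delta_{k+1}\Lambda_k}$ to cancel the middle factor, leaving $\Delta_{k+1}(\zeta^*\circ \xi) = \Delta_{k+1}(\langle\xi,\zeta\rangle_{B_k})$ by functoriality. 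This is the step where the hypothesis that $W_{k+1}$ is a \emph{unitary} connection, rather than an arbitrary intertwiner, is used.

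If there is any subtlety at all, it is merely notational: one must keep track of whether $W_{k+1}$ is whiskered on the right by $\Gamma_k\cdots\Gamma_1 m_0$ (on the source) or by $\Gamma_k\cdots\Gamma_1 m_0$ viewed in $\mcal M_{k+1}$ via $\Gamma_{k+1}$, and dually for $\Delta_{k+1}$. The graphical calculus makes this bookkeeping transparent, and so the whole lemma reduces to three short picture manipulations using naturality, functoriality, and unitarity of $W_{k+1}$.
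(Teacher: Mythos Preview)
Your proposal is correct. The paper itself does not supply a proof of this lemma; it is merely quoted from \cite{DGGJ}, so there is nothing to compare against except the obvious verification, which is exactly what you have outlined: functoriality of $\Delta_{k+1}$ for (b), naturality of $W_{k+1}$ for (a), and unitarity of $W_{k+1}$ for (c).
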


Set $ H_\infty \coloneqq \us {k \geq 0} \cup H_k $ which clearly becomes an $ A_\infty $-$ B_\infty $ right correspondence.
To the $ 1 $-cell $ (\Lambda_\bullet , W_\bullet)$ we associate the $ A_\infty $-$ B_\infty $ right correspondence $H_\infty$.

We also have a Pimsner-Popa (PP) basis of the right-$ B_\infty $-module $ {H_\infty }$ with respect to the $ B_\infty $-valued inner product.
\begin{lem}(\cite{DGGJ}) \label{PPbasispreC*}
	There exists a subset $ \mscr S $ of $ H_0 $ such that $ \displaystyle \sum_{\sigma \in \mscr S} \sigma \circ  \sigma^* \  = \ 1_{\Lambda_0 m_0}$; moreover, any such $ \mscr S $ is a PP-basis for the right $ B_\infty $-module $ H_\infty $.
\end{lem}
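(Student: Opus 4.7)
The plan is to break the lemma into an existence half and a ``moreover'' half. For existence, I would use that $\mathcal{N}_0$ is a finite semisimple C*-category in which $n_0 = \bigoplus_{w \in V_{\mathcal{N}_0}} w$ contains one copy of every simple. Decompose $\Lambda_0 m_0 \simeq \bigoplus_i w_{j(i)}$ into simples and pick, for each summand, an isometry $s_i : w_{j(i)} \hookrightarrow \Lambda_0 m_0$ realising this summand. Let $p_w : n_0 \twoheadrightarrow w$ denote the canonical projection and set $\sigma_i := s_i \circ p_{j(i)} \in H_0$. Then $\sigma_i \circ \sigma_i^* = s_i \circ s_i^*$ is the projection in $\mathcal{N}_0(\Lambda_0 m_0, \Lambda_0 m_0)$ onto the $i$-th summand, and summing over $i$ yields $1_{\Lambda_0 m_0}$, so $\mathscr{S} := \{\sigma_i\}$ works. (Since the hom-spaces are finite-dimensional and $\sigma \sigma^* \geq 0$ with finite total sum, any $\mathscr{S}$ satisfying the stated identity is automatically finite up to zero elements.)

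For the ``moreover'' half, fix any $\mathscr{S} \subset H_0$ with $\sum_{\sigma \in \mathscr{S}} \sigma \circ \sigma^* = 1_{\Lambda_0 m_0}$ and, for each $k \geq 0$, denote by $\tilde\sigma^{(k)} \in H_k$ the image of $\sigma$ under the iterated inclusion $I_k \circ \cdots \circ I_1$ from \eqref{Hkinclusion}. The heart of the proof is the following inductive claim:
\begin{equation*}
\sum_{\sigma \in \mathscr{S}} \tilde\sigma^{(k)} \circ \bigl(\tilde\sigma^{(k)}\bigr)^* \;=\; 1_{\Lambda_k \Gamma_k \cdots \Gamma_1 m_0} \quad \text{for all } k \geq 0.
\end{equation*}
The base case $k = 0$ is the hypothesis. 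For the inductive step, observe that for any $\zeta \in H_k$,
\begin{equation*}
I_{k+1}(\zeta) \circ I_{k+1}(\zeta)^* \;=\; \bigl(W_{k+1}\bigr)_{\Gamma_k \cdots \Gamma_1 m_0} \circ \Delta_{k+1}(\zeta \circ \zeta^*) \circ \bigl(W_{k+1}\bigr)_{\Gamma_k \cdots \Gamma_1 m_0}^*,
\end{equation*}
because $\Delta_{k+1}$ is a $*$-functor. Summing over $\mathscr{S}$, applying the inductive hypothesis, and using additivity of $\Delta_{k+1}$ gives $\Delta_{k+1}(1) = 1$ in the middle, after which unitarity of $W_{k+1}$ collapses the expression to the identity at level $k+1$.

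Granted the claim, the PP-basis property is immediate: any $\xi \in H_\infty$ sits in some $H_k$ after applying the compatible inclusions, and then
\begin{equation*}
\xi \;=\; 1_{\Lambda_k \Gamma_k \cdots \Gamma_1 m_0} \circ \xi \;=\; \sum_{\sigma \in \mathscr{S}} \tilde\sigma^{(k)} \circ \bigl((\tilde\sigma^{(k)})^* \circ \xi\bigr) \;=\; \sum_{\sigma \in \mathscr{S}} \tilde\sigma^{(k)} \cdot \langle \xi, \tilde\sigma^{(k)} \rangle_{B_k},
\end{equation*}
where the last equality uses the formula $\langle \xi, \tilde\sigma^{(k)} \rangle_{B_k} = (\tilde\sigma^{(k)})^* \circ \xi$ from \eqref{Hkinp} and the right $B_k$-action from \eqref{Hkbimod}. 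This is exactly the reconstruction formula for a PP-basis with respect to the $B_\infty$-valued inner product.

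The only step that is not purely bookkeeping is the inductive identity, and I expect it to be the main (though mild) obstacle: one must verify carefully that the embedding map $I_{k+1}$ interacts with the involution and with the sum $\sum \sigma \sigma^*$ so that the connection $W_{k+1}$ really does meet its adjoint and contract to the identity. This ultimately hinges on the unitarity of $W_{k+1}$ (built into the definition of a $1$-cell of $\mathbf{UC}$) and on $\Delta_{k+1}$ being a $*$-linear additive functor, both of which are part of the data in \Cref{UCdefn}.
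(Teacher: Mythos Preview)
Your proof is correct. The paper does not supply its own proof of this lemma---it simply cites \cite{DGGJ}---so there is nothing in the present paper to compare against; your argument (decompose $\Lambda_0 m_0$ into simples for existence, then push $\sum_\sigma \sigma\sigma^* = 1$ up the tower using unitarity of $W_{k+1}$ and $*$-functoriality of $\Delta_{k+1}$, and read off the reconstruction formula from \eqref{Hkinp} and \eqref{Hkbimod}) is exactly the natural one and is almost certainly what \cite{DGGJ} does.
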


\begin{thm}(\cite{DGGJ}) \label{flatthm}
	Starting from a $ 2 $-cell $ \left\{\eta^{(k)} \in \normalfont \t{NT}\left(\Lambda_k , \Omega_k \right)\right\}_{k\geq K} $, we have an intertwiner $ \Phi_{\eta^{(k)}_{\Gamma_k \cdots \Gamma_1 m_0}} \in \vphantom{\mcal L}_{A_\infty} \mcal L_{B_\infty} (H_\infty, G_\infty) $ which is independent of $ k \geq K $.
	
	Conversely, for every $ T \in \vphantom{\mcal L}_{A_\infty} \mcal L_{B_\infty} (H_\infty, G_\infty) $ ($ = $ the space of $ A_\infty $-$ B_\infty $-linear adjointable operator) and for all $ k \geq K_T\coloneqq \min \left\{l : T H_0 \subset G_l \right\} $, there exists unique $ \eta^{(k)} \in \normalfont \t{NT} (\Lambda_k , \Omega_k) $ such that $ T  =  \Phi_{\eta^{(k)}_{\Gamma_k \cdots \Gamma_1 m_0}} $.
	Further, $ \left(\eta^{(k)} , \eta^{(k+1)}\right) $ satisfies exchange relation for all $ k \geq K_T $.
\end{thm}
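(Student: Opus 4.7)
\textbf{Plan for \Cref{flatthm}.} For the forward direction I build $\Phi$ by left composition: for any $l \geq K$ and $\xi \in H_l$, set $\Phi(\xi) \coloneqq \eta^{(l)}_{m_l} \circ \xi$, where $m_l \coloneqq \Gamma_l \cdots \Gamma_1 m_0$. The key compatibility is that replacing $\xi$ by its image in $H_{l+1}$ under \eqref{Hkinclusion} does not change the value of $\Phi$; unwinding this gives the identity
\[
\eta^{(l+1)}_{m_{l+1}} \circ (W_{l+1})_{m_l} \;=\; (W'_{l+1})_{m_l} \circ \Delta_{l+1}\!\left(\eta^{(l)}_{m_l}\right),
\]
where $W'_\bullet$ denotes the connection for $\Omega_\bullet$, and this is precisely the exchange relation evaluated at $m_l$. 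Left $A_\infty$-linearity of $\Phi$ is the naturality of $\eta^{(l)}$ applied to endomorphisms of $m_l$, right $B_\infty$-linearity is associativity of composition, and adjointability is produced by the 2-cell $\{(\eta^{(k)})^*\}$. Independence of the level $k\ge K$ in the notation $\Phi_{\eta^{(k)}_{m_k}}$ is the same compatibility statement.

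For the converse, let $T \in \vphantom{\mcal L}_{A_\infty} \mcal L_{B_\infty}(H_\infty, G_\infty)$. The index $K_T$ exists because $H_0$ is finite dimensional and $G_\infty = \bigcup_l G_l$. Fix a PP basis $\mscr S \subseteq H_0$ from \Cref{PPbasispreC*}, write $\iota_k : H_0 \hookrightarrow H_k$ for the iterated inclusion from \eqref{Hkinclusion}, and note that a short induction using unitarity of $W_\bullet$ upgrades the ground-level PP identity $\sum_\sigma \sigma \circ \sigma^* = 1_{\Lambda_0 m_0}$ to
\[
\sum_{\sigma \in \mscr S} \iota_k(\sigma) \circ \iota_k(\sigma)^* \;=\; 1_{\Lambda_k m_k}.
\]
For $k \geq K_T$ I then define
\[
\eta^{(k)}_{m_k} \;\coloneqq\; \sum_{\sigma \in \mscr S} T\!\left(\iota_k(\sigma)\right) \circ \iota_k(\sigma)^* \;\in\; \mcal N_k(\Lambda_k m_k,\, \Omega_k m_k).
\]
Right $B_k$-linearity of $T$ combined with the PP-basis identity gives $T\xi = \eta^{(k)}_{m_k} \circ \xi$ for every $\xi \in H_k$, and left $A_k$-linearity of $T$ forces $\eta^{(k)}_{m_k} \circ \Lambda_k(\alpha) = \Omega_k(\alpha) \circ \eta^{(k)}_{m_k}$ for all $\alpha \in A_k$.

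The remaining task is to promote the $A_k$-bilinear morphism $\eta^{(k)}_{m_k}$ to a natural transformation $\eta^{(k)} : \Lambda_k \to \Omega_k$ on all of $\mcal M_k$, and then to verify uniqueness and the exchange relation. Here bi-faithfulness is decisive: a short adjunction argument (using faithfulness of the adjoint of each $\Gamma_j$) shows that every simple of $\mcal M_k$ appears as a direct summand of $m_k$, so choosing inclusions/projections $v \hookrightarrow m_k \twoheadrightarrow v$ and compressing $\eta^{(k)}_{m_k}$ defines $\eta^{(k)}_v$, while $A_k$-bilinearity guarantees independence of the chosen splitting; naturality at arbitrary morphisms is automatic since they all factor through $m_k$. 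Uniqueness of $\eta^{(k)}$ given $T$ follows because any other candidate is reconstructed by the very PP-basis formula above at $m_k$ and then extended uniquely by naturality. Finally, for consecutive $k, k+1 \geq K_T$ both $\Phi_{\eta^{(k)}_{m_k}}$ and $\Phi_{\eta^{(k+1)}_{m_{k+1}}}$ coincide with $T$, so the level-compatibility identity from the forward direction forces the exchange relation for $(\eta^{(k)}, \eta^{(k+1)})$ at $m_k$; naturality of both sides in the object then propagates it to every object, since every object of $\mcal M_k$ is a summand of $m_k$. The main obstacle, conceptually, is this last promotion from the single morphism $\eta^{(k)}_{m_k}$ to a full natural transformation, for which the bi-faithfulness of the $0$- and $1$-cells is the essential structural input.
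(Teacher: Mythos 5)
Your proposal is correct, but note that this paper does not actually prove \Cref{flatthm}: it is imported verbatim from \cite{DGGJ} (it corresponds to the ``flat sequence'' correspondence established there), so there is no in-document proof to compare against. Your reconstruction is the natural argument and all the essential ingredients are in place: $\Phi$ as left composition with $\eta^{(k)}_{m_k}$, the observation that level-compatibility of $\Phi$ under \eqref{Hkinclusion} is exactly the exchange relation evaluated at $m_k$, the propagated Pimsner--Popa identity $\sum_\sigma \iota_k(\sigma)\circ\iota_k(\sigma)^* = 1_{\Lambda_k m_k}$, and the reconstruction formula $\eta^{(k)}_{m_k} = \sum_\sigma T(\iota_k(\sigma))\circ\iota_k(\sigma)^*$. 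The one place where you should be slightly more explicit is the passage from identities that hold \emph{after composing with every} $\xi \in H_k$ (e.g.\ $\eta^{(k)}_{m_k}\circ\Lambda_k(\alpha)\circ\xi = \Omega_k(\alpha)\circ\eta^{(k)}_{m_k}\circ\xi$, and likewise the exchange relation at $m_k$ after composing with $\Delta_{k+1}\xi$) to honest equalities of morphisms out of $\Lambda_k m_k$: this cancellation is exactly another application of the PP identity, $f = \sum_\sigma f\circ\iota_k(\sigma)\circ\iota_k(\sigma)^*$, and is worth one line. Your identification of bi-faithfulness as the input guaranteeing that every simple of $\mcal M_k$ is a summand of $m_k$ (via faithfulness of the adjoints $\ol\Gamma_j$ and nondegeneracy of $m_0$) is precisely the structural point that makes the compression/extension to a full natural transformation, the uniqueness, and the propagation of the exchange relation from $m_k$ to all objects work.
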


Clearly \textbf{UC} becomes a pre-C*-2-category.

\begin{rem}
	We will denote the object $m_0$ by dashed lines and any other object by dotted lines in (ii) of the pictorial notations mentioned at the beginning of \Cref{Graphcalc}.
\end{rem}

\section{Q-system in \textbf{UC}}
In this section, given a Q-system in \textbf{UC} for a $0$-cell, we explore certain structural properties of the associated bimodules that will further enable us to construct new $0$-cells and a new dualizable $1$-cell in the next section, that will implement Q-system completion of $\textbf{UC}$.  

\vspace*{2mm}

Let $(\Gamma_\bullet,\mcal M_\bullet)$ be a $0$-cell in \textbf{UC}.
and $(Q_\bullet,W_\bullet^Q,m_\bullet,i_\bullet)$ be a Q-system in $\textbf{UC}_1 ((\Gamma_\bullet,\mcal M_\bullet),(\Gamma_\bullet,\mcal M_\bullet))$. Graphicaly,
each natural transformation $m_k, i_k$ and $W_{k+1}^Q$ will be represented by the following respective diagrams:

\[
m_k \coloneqq
\raisebox{-6mm}{
	\begin{tikzpicture}
	\draw[red,in=90,out=90,looseness=2] (-0.5,0.5) to (-1.2,0.5);
	\draw[red] (-1.2,0.5) to (-1.2,-.2); 
	\draw[red] (-.5,0.5) to (-.5,-.2);
	\node at (-0.85,0.9) {$\red{\bullet}$};
	\draw[red] (-.85,0.9) to (-.85,1.6);
	\node[left,scale=0.7] at (-.85,1.4) {$Q_k$};
	\node[left,scale=0.7] at (-1.2,0) {$Q_k$};
	\node[right,scale=0.7] at (-.5,0) {$Q_k$};
	\end{tikzpicture}
}\!\!\! \ \ , \ \ \ \
i_k \coloneqq \ \
\raisebox{-6mm}{
	\begin{tikzpicture}
	\draw [red] (-0.8,-.6) to (-.8,1);
	\node at (-.8,-.6) {$\red{\bullet}$};
	\node[left,scale=0.7] at (-.8,0) {$Q_k$};
	\end{tikzpicture}
}\ \ \ \ \t{and} \ \ \ \
W_{k+1}^Q \coloneqq
\raisebox{-8mm}{
\begin{tikzpicture}
\draw[in=-90,out=90] (0,0) to (.5,2);
\draw[white,line width=1mm,in=-90,out=90] (.7,0) to (-.1,2); 
\draw[red,in=-90,out=90] (.7,0) to (-.1,2);
\node[scale=0.7] at (-.4,0) {$\Gamma_{k+1}$}; 
\node[scale=0.7] at (1,0) {$Q_k$};
\node[scale=0.7] at (-.4,1.7) {$Q_{k+1}$}; 
\node[scale=0.7] at (.8,1.5) {$\Gamma_{k+1}$};
\end{tikzpicture}}
\ \ \forall k \geq 0 \ \
\].

Pictorially, exchange relation of $m_k$'s and $i_k$'s with respect to $W_\bullet$ will be denoted as follows:
\[
\raisebox{-6mm}{
	\begin{tikzpicture}[rotate=40,transform shape]
	\draw (-2,1.3) to (.5,1.3);
	\draw[white,line width=1mm] (-.85,1) to (-.85,1.6);
	\draw[red,in=90,out=90,looseness=2] (-0.5,0.5) to (-1.2,0.5);
	\draw[red] (-1.2,0.5) to (-1.2,-.2); 
	\draw[red] (-.5,0.5) to (-.5,-.2);
	\node at (-0.85,0.9) {$\red{\bullet}$};
	\draw[red] (-.85,0.9) to (-.85,1.6);
	\node[left,scale=0.7] at (-.85,1.1) {$Q_k$};
	\node[left,scale=0.7] at (-.85,1.6) {$Q_{k+1}$};
	\node[left,scale=0.7] at (-1.2,0) {$Q_k$};
	\node[right,scale=0.7] at (-.5,0) {$Q_k$};
	\node[left,scale=0.7] at (-1.4,1) {$\Gamma_{k+1}$};
	\node[left,scale=0.7] at (.5,1) {$\Gamma_{k+1}$};
	\end{tikzpicture}
}
=
\raisebox{-4mm}{
	\begin{tikzpicture}[rotate=40,transform shape]
	\draw (-2,.2) to (.5,.2);
	\draw[white,line width=1mm] (-1.2,.5) to (-1.2,-.2);
	\draw[white,line width=1mm] (-.5,.5) to (-.5,-.2);
	\draw[red,in=90,out=90,looseness=2] (-0.5,0.5) to (-1.2,0.5);
	\draw[red] (-1.2,0.5) to (-1.2,-.2); 
	\draw[red] (-.5,0.5) to (-.5,-.2);
	\node at (-0.85,0.9) {$\red{\bullet}$};
	\draw[red] (-.85,0.9) to (-.85,1.6);
	\node[left,scale=0.7] at (-.85,1.1) {$Q_{k+1}$};
	\node[left,scale=0.7] at (-1.2,0) {$Q_k$};
	\node[right,scale=0.7] at (-.5,0) {$Q_k$};
	\node[left,scale=0.7] at (-1.4,.4) {$\Gamma_{k+1}$};
	\node[left,scale=0.7] at (.5,.4) {$\Gamma_{k+1}$};
	\end{tikzpicture}} \ \
\t{and} \ \
\raisebox{-6mm}{
	\begin{tikzpicture}[rotate=40,transform shape]
	\draw (-1.7,.5) to (.1,.5);
	\draw[white,line width=1mm] (-0.8,-.6) to (-.8,1);
	\draw [red] (-0.8,-.6) to (-.8,1);
	\node at (-.8,-.6) {$\red{\bullet}$};
	\node[left,scale=0.7] at (-.8,0) {$Q_k$};
	\node[left,scale=0.7] at (-.8,.7) {$Q_{k+1}$};
	\node[left,scale=0.7] at (-1.5,.4) {$\Gamma_{k+1}$};
	\node[left,scale=0.7] at (.3,.3) {$\Gamma_{k+1}$};
	\end{tikzpicture}}
=
\raisebox{-6mm}{
	\begin{tikzpicture}
	\draw [red] (-0.8,-.6) to (-.8,1);
	\node at (-.8,-.6) {$\red{\bullet}$};
	\node[left,scale=0.7] at (-.8,0) {$Q_{k+1}$};
	\draw (-.1,-.6) to (-.1,1);
	\node[right,scale=0.7] at (-.1,0) {$\Gamma_{k+1}$};
	\end{tikzpicture}}\]
eventually for all $ k $.

\begin{rem}\label{lremark}
	From \Cref{UCisomorphism}, we observe that for our $Q$-system $\left( Q_\bullet, m_\bullet, i_\bullet\right)$ in $\textbf{UC}$ the natural transformations $m_k$ and $i_k$ satisfy (Q1)-(Q4) as in \Cref{Qsysdefn} eventually for all $k \ $. For the rest of the paper we fix a natural number $l$ such that $m_k$ and $i_k$ satisfy (Q1)-(Q4) and the exchange relations for $k \geq l \ $. 
\end{rem}

Consider the filtration of finite dimensional C*-algebras $\left\{A_k \coloneqq \t{End}(\Gamma_k \cdots \Gamma_1 m_0)\right\}_{k\geq 1} $ associated to the $ 0 $-cell $ \Gamma_\bullet $ where $m_0$ is direct sum of a maximal set of mutually non-isomorphic simple objects in $\mcal M_0$.
Let $\left\{H_k \coloneqq \mcal M_k(\Gamma_k \cdots \Gamma_1 m_0,Q_k \Gamma_k \cdots \Gamma_1 m_0 )\right\}_{k \geq 1}$ be the right correspondence associated to $ Q_\bullet $.
By construction (\Cref{Hkbimod} and \Cref{Hkinp}) , $H_k$ is a right $A_k$-$A_k$ correspondence.
Thus, one may view $ H_k $ as a $ 1 $-cell in the $ 2 $-category \textbf{C*Alg} of right correspondence bimodules over pairs of C*-algebras.
We will further establish that each $H_k$ is a Q-system in $\textbf{C*Alg}(A_k , A_k )$ for $k \geq l \ $.
In order to do this, we will use the following identification.
\begin{rem}\label{connesfus}
Let $ \left\{Y_k \coloneqq \mcal M_k(\Gamma_k \cdots \Gamma_1 m_0,Q_k^{2} \Gamma_k \cdots \Gamma_1 m_0 ) \right\}_{k\geq 1} $ denote the right correspondence associated to the $ 1 $-cell $ Q_\bullet \btimes Q_\bullet $ in \textbf{UC}.
The proof of [Propn. 3.12 of \cite{DGGJ}] tells us that the map $\xi \boxtimes \eta \longmapsto $
\raisebox{-18mm}{
	\begin{tikzpicture}
		\draw[dashed] (0.5,0.4) -- (0.5,1.2);
		\draw (0.4,0.4) to (0.4,1.2);
		\draw (-0.25,0.4) to (-0.25,1.2);
		\node[left] at (0.5,0.6) {$\cdots$};
		\draw (0.4,-0.15) to (0.4,-1.2);
		\draw (-0.25,-0.15) to (-0.25,-1.2);
		\draw[dashed] (0.5,-0.2) -- (0.5,-1.2);
		\node[left] at (0.5,-0.6) {$\cdots$};
		\node[draw,thick,rounded corners,minimum width=56] at (-0.3,-1.45) {$\eta$};
		\draw (0.4,-1.7) to (0.4,-2.2);
		\draw (-0.25,-1.7) to (-0.25,-2.2);
		\draw[dashed] (0.5,-1.7) -- (0.5,-2.2);
		\node[left] at (0.5,-2) {$\cdots$};
		\draw[red] (-0.5,0.4) to (-0.5,1) ;
		\draw[red] (-1.2,1) to (-1.2,-1.21); 
		\node[left] at (-1.1,0) {$Q_k$};
		\node [right,scale=0.5] at (-.25,1) {$\Gamma_k$};
		\node [right,scale=0.5] at (0.08,1.2) {$\Gamma_1$};
		\node [right,scale=0.7] at (0.5,1) {$m_0$};
		\node[draw,thick,rounded corners,fill=white,minimum width=40] at (0,0.14) {$\xi$}; 
\end{tikzpicture}}
is an isomorphism between $H_k \us {A_k} \boxtimes H_k$ and $ Y_k $ as right $A_k$-$A_k$ correspondence.
\end{rem}
Via the above identification, the multiplication $ 2 $-cell $ m_\bullet $ and the unit $ 2 $-cell $ i_\bullet $ in \textbf{UC} corresponds to the maps $\widetilde m_k : H_k \us{A_k}\boxtimes H_k \to H_k$ and $\widetilde i_k : A_k \to H_k $ respectively at the level of bimodules; more explicitly 
\[
\widetilde m_k(\xi \btimes \eta)\coloneqq
\raisebox{-18mm}{
             \begin{tikzpicture}
             \draw[dashed] (0.5,0.4) -- (0.5,1.4);
             \draw (0.4,0.4) to (0.4,1.4);
             \draw (-0.25,0.4) to (-0.25,1.4);
             \node[left] at (0.5,0.6) {$\cdots$};
             \draw (0.4,-0.15) to (0.4,-1.2);
             \draw (-0.25,-0.15) to (-0.25,-1.2);
             \draw[dashed] (0.5,-0.2) -- (0.5,-1.2);
             \node[left] at (0.5,-0.6) {$\cdots$};
             \node[draw,thick,rounded corners,minimum width=56] at (-0.3,-1.45) {$\eta$};
             \draw (0.4,-1.7) to (0.4,-2.2);
             \draw (-0.25,-1.7) to (-0.25,-2.2);
             \draw[dashed] (0.5,-1.7) -- (0.5,-2.2);
             \node[left] at (0.5,-2) {$\cdots$};
             \draw[red] (-0.5,0.4) to (-0.5,0.5) ;
             \draw[red,in=90,out=90,looseness=2] (-0.5,0.5) to (-1.2,0.5);
             \draw[red] (-1.2,0.5) to (-1.2,-1.21); 
             \node at (-0.85,0.9) {$\red{\bullet}$};
             \draw[red] (-.85,.9) to (-.85,1.4);
             \node[left] at (-1.1,0) {$Q_k$};
             \node [right,scale=0.8] at (-.8,1.2) {$\Gamma_k$};
             \node [right,scale=0.8] at (-.1,1.2) {$\Gamma_1$};
             \node at (0.85,1) {$m_0$};
             \node[draw,thick,rounded corners,fill=white,minimum width=40] at (0,0.14) {$\xi$}; 
             \end{tikzpicture}}\!\!\! \ \              
             \t{and} \ \ \ \widetilde i_k(x) \coloneqq
              			\raisebox{-10mm}{
						\begin{tikzpicture}
						\node[draw,thick,rounded corners,minimum width=30] at (0,0) {$x$};
						\draw (0.3,0.21) to (.3,1);
						\draw (-0.3,0.21) to (-.3,1);
						\draw (0.3,-0.21) to (.3,-1);
						\draw (-0.3,-.21) to (-.3,-1);
						\draw[dashed] (0.4,0.27) to (.4,1);
						\draw[dashed] (0.4,-0.27) to (.4,-1);
						\node at (0.05,0.4) {$\cdots$};
						\node at (0.05,-0.5) {$\cdots$};
						\node[left,scale=0.8] at (-.2,0.8) {$\Gamma_k$};
						\node[left,scale=0.8] at (0.4,0.8) {$\Gamma_1$};
						\node[right] at (0.3,.6) {$m_0$};
						\draw [red] (-0.8,-1) to (-.8,1);
						\node at (-.8,-.95) {$\red{\bullet}$};
						\node[left,scale=0.7] at (-.8,0) {$Q_k$};
						\end{tikzpicture}} \ .
\]
\begin{prop}
For each $k \geq l$, $\widetilde m_k$ and $\widetilde i_k$ are adjointable maps and hence $ 2 $-cells in {\normalfont\textbf{C*Alg}}.
Moreover, $(H_k,\widetilde m_k,\widetilde i_k)$ becomes a Q-system in $\normalfont\textbf{C*Alg}(A_k , A_k)$ for each $k \geq l \ $.
\end{prop}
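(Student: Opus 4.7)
The plan is to transport the Q-system structure from $(Q_\bullet, m_\bullet, i_\bullet)$ as a 1-cell in $\textbf{UC}$ to the bimodule level, using the identification of \Cref{connesfus} as the key translation device. Since the right correspondence associated to $Q_\bullet \btimes Q_\bullet$ is naturally identified with $H_k \us{A_k}\boxtimes H_k$, and similarly the right correspondence of the identity 1-cell $1_{\Gamma_\bullet}$ is $A_k$ itself, the natural transformations $m_k$ and $i_k$ must descend to bimodule maps $\widetilde m_k$ and $\widetilde i_k$. I will carry out three steps: (a) verify $\widetilde m_k, \widetilde i_k$ are $A_k$-bimodule maps, (b) produce their adjoints, (c) verify (Q1)-(Q4) pictorially.

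For step (a), left and right $A_k$-linearity of $\widetilde m_k$ follow from naturality of $m_k$: pre/post-composing the defining picture with $\Gamma_k(\alpha)$ for $\alpha \in A_k$ slides the $\Gamma_k \cdots \Gamma_1 m_0$ string freely past the multiplication vertex by naturality of $m_k$ at the underlying object, and similarly for $\widetilde i_k$. For step (b), the candidates $\widetilde m_k^* : H_k \to H_k \us{A_k}\boxtimes H_k$ and $\widetilde i_k^* : H_k \to A_k$ are defined by replacing the vertex in the defining pictures with $m_k^*$ and $i_k^*$ respectively, transported via \Cref{connesfus}. To check these are adjoints with respect to the inner product (\ref{Hkinp}), one computes $\langle \widetilde m_k(\xi \btimes \eta), \zeta \rangle_{A_k} = \zeta^* \circ \widetilde m_k(\xi \btimes \eta)$, stacks the resulting picture, and applies the $*$-operation on 2-cells in $\textbf{UC}$ (which realises $m_k$ by flipping the vertex into $m_k^*$); the result equals $\langle \xi \btimes \eta, \widetilde m_k^*(\zeta)\rangle$ after using \Cref{connesfus} to unfold $\boxtimes_{A_k}$. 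The analogous calculation for $\widetilde i_k$ is immediate.

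For step (c), each of the axioms (Q1)-(Q4) at the bimodule level pictorially coincides with the corresponding identity among natural transformations once the identification of \Cref{connesfus} is used to interpret all instances of $\boxtimes_{A_k}$. For $k \geq l$, \Cref{lremark} guarantees that $m_k$ and $i_k$ satisfy (Q1)-(Q4) as 2-cells in $\textbf{UC}$ on the nose (not merely eventually), so the associated bimodule diagrams commute at level $k$. Thus $(H_k, \widetilde m_k, \widetilde i_k)$ inherits the Q-system axioms from $(Q_k, m_k, i_k)$.

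The main obstacle is a bookkeeping one rather than a conceptual one, namely checking that the identification $H_k \us{A_k}\boxtimes H_k \cong Y_k$ of \Cref{connesfus} is compatible with iterated tensor products (so that, for instance, the associativity diagram for $\widetilde m_k$ can be read off from the three-fold product picture). Once this compatibility is nailed down, the proof becomes a mechanical translation of pictures; the bi-faithfulness and $2$-categorical naturality built into $\textbf{UC}$ ensure no further hypotheses are needed beyond $k \geq l$.
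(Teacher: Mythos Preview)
Your proposal is correct and follows essentially the same approach as the paper: exhibit the adjoints of $\widetilde m_k$ and $\widetilde i_k$ by replacing the $m_k$, $i_k$ vertices with $m_k^*$, $i_k^*$ (transported through the identification of \Cref{connesfus}), then read off (Q1)--(Q4) at the bimodule level directly from the corresponding identities for the natural transformations $m_k$, $i_k$ which hold on the nose for $k\geq l$ by \Cref{lremark}. The paper's write-up is terser---it simply records the adjoint pictures and asserts the rest---while you spell out the bimodule-linearity and the inner-product verification, but the underlying argument is the same.
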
 
\begin{proof}
Using the identification in \Cref{connesfus}, the adjoint of $\widetilde m_k$ is given by 
\raisebox{-12mm}{
\begin{tikzpicture}
\draw (0.6,0.28) to (.6,1);
\draw (-.1,0.28) to (-.1,1);
\draw (-.1,-0.28) to (-.1,-1);
\draw (.6,-.28) to (.6,-1);
\draw[dashed] (0.7,0.28) to (.7,1);
\draw[dashed] (0.7,-0.28) to (.7,-1);
\node at (0.3,0.5) {$\cdots$};
\node at (0.3,-0.5) {$\cdots$};
\node[right,scale=0.6] at (-.1,0.8) {$\Gamma_k$};
\node[left,scale=0.6] at (0.7,0.8) {$\Gamma_1$};
\node[right] at (0.6,.6) {$m_0$};
\draw[red,in=-90,out=-90,looseness=2] (-.8,1) to (-.25,1);
\draw[red] (-.5,.28) to (-.5,.6);
\node at (-.5,.65) {$\red{\bullet}$};
\node[left,scale=0.6] at (-.8,.7) {$Q_k$};
\node[draw,thick,rounded corners,fill=white,minimum width=48] at (0,0) {$\xi$};
\end{tikzpicture}}
and that of $\widetilde i_k$ is given by 
\raisebox{-10mm}{
\begin{tikzpicture}
\draw (0.4,0.28) to (.4,1);
\draw (-0.4,0.28) to (-.4,1);
\draw (0.4,-0.28) to (.4,-1);
\draw (-0.4,-.28) to (-.4,-1);
\draw[dashed] (0.5,0.28) to (.5,1);
\draw[dashed] (0.5,-0.28) to (.5,-1);
\node at (0.05,0.5) {$\cdots$};
\node at (0.05,-0.5) {$\cdots$};
\node[right,scale=0.6] at (-.4,0.8) {$\Gamma_k$};
\node[left,scale=0.6] at (0.5,0.8) {$\Gamma_1$};
\node[right] at (0.4,.6) {$m_0$};
\draw[red] (-.56,.28) to (-.56,.9);
\node at (-.56,.9) {$\red{\bullet}$};
\node[left,scale=0.6] at (-.5,.6) {$Q_k$};
\node[draw,thick,rounded corners,fill=white,minimum width=40] at (0,0) {$\xi$};
\end{tikzpicture}}.
Now using $\widetilde m_k$ and $\widetilde i_k$ and their adjoints, and the properties (Q1-Q4) of $ m_\bullet $ and $ i_\bullet  $  mentioned at in preliminaries , associativity, unitality, frobenius property and separability of $\left(H_k, \widetilde m_k, \widetilde i_k\right)$ easily follows. 		
\end{proof}
We now explore certain structural properties of $H_k$.

We prove the following proposition using ideas from \cite{CPJP}.
\begin{prop}\label{HkCstar}
For each $ k \geq l $, the space $H_k$ is a unital {\normalfont C*}-algebra with multiplication, adjoint and unit given by
\[
\xi \cdot \eta \coloneqq \widetilde m_k \left(\xi \btimes \eta \right) \ \t{ , } \ 
	\left(\raisebox{-8mm}{	
		\begin{tikzpicture}
			\draw (0.4,0.28) to (.4,.8);
			\draw (-0.4,0.28) to (-.4,.8);
			\draw (0.4,-0.28) to (.4,-.8);
			\draw (-0.4,-.28) to (-.4,-.8);
			\draw[dashed] (0.5,0.28) to (.5,.8);
			\draw[dashed] (0.5,-0.28) to (.5,-.8);
			\node at (0.05,0.47) {$\cdots$};
			\node at (0.05,-0.5) {$\cdots$};
			\node[right,scale=0.8] at (-1,-0.7) {$\Gamma_k$};
			\node[left,scale=0.8] at (0.5,-0.9) {$\Gamma_1$};
			\node[right] at (0.44,.5) {$m_0$};
			\draw[red] (-.56,.28) to (-.56,.8);
			\node[left,scale=0.8] at (-.5,.5) {$Q_k$};
			\node[draw,thick,rounded corners,fill=white,minimum width=40] at (0,0) {$\xi$};
	\end{tikzpicture}} 
	\right)^{\dagger} = 
	\raisebox{-10mm}{
		\begin{tikzpicture}
			\draw (0.4,0.28) to (.4,.8);
			\draw (-0.4,0.28) to (-.4,.8);
			\draw (0.4,-0.28) to (.4,-.8);
			\draw (-0.4,-.28) to (-.4,-.8);
			\draw[dashed] (0.5,0.28) to (.5,.8);
			\draw[dashed] (0.5,-0.28) to (.5,-.8);
			\node at (0.05,0.47) {$\cdots$};
			\node at (0.05,-0.5) {$\cdots$};
			\node[right,scale=0.8] at (-.6,1) {$\Gamma_k$};
			\node[left,scale=0.8] at (0.6,1) {$\Gamma_1$};
			\node[right] at (0.44,.44) {$m_0$};
			\draw[red] (-.56,-.28) to (-.56,-.4);
			\draw[red,in=-90,out=-90,looseness=2] (-.56,-.4) to (-.9,-.4);
			\draw[red] (-.9,-.4) to (-.9,.8);
			\draw[red] (-.72,-.6) to (-.72,-.8);
			\node at (-.72,-.6) {$\red{\bullet}$};
			\node at (-.72,-.85) {$\red{\bullet}$};
			\node[left,scale=0.8] at (-.8,.47) {$Q_k$};
			\node[draw,thick,rounded corners,fill=white,minimum width=40] at (0,0) {$\xi^*$};
	\end{tikzpicture}}
\ \t{ and } \ 
	\raisebox{-6mm}{ 
	\begin{tikzpicture}
		\draw [red] (-0.8,-.6) to (-.8,.4);
		\node at (-.8,-.6) {$\red{\bullet}$};
		\node[left,scale=0.8] at (-.8,0) {$Q_k$};
		\draw (-.6,-.6) to (-.6,.4);
		\draw (0,-.6) to (0,.4);
		\draw[dashed] (0.2,-.6) to (0.2,.4); 
		\node at (-.3,-.2) {$\cdots$};
		\node[right,scale=0.8] at (-.85,0.6) {$\Gamma_k$};
		\node[left,scale=0.8] at (0.25,0.6) {$\Gamma_1$};
		\node[right] at (0.2,0.1) {$m_0$};
\end{tikzpicture}} 
\]
respectively for $ \xi, \eta \in H_k $.
\end{prop}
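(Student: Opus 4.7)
The plan is to first verify that $(H_k, \cdot, {\dagger}, \widetilde i_k(1_{A_k}))$ is a unital $*$-algebra, and then upgrade this to a C*-algebra by faithfully representing $H_k$ on itself, viewed as a Hilbert $A_k$-module.

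For the algebraic axioms, associativity of $\cdot$ is the direct diagrammatic translation of axiom (Q1) applied to $\widetilde m_k$. The unital axiom $\widetilde i_k(1_{A_k}) \cdot \xi = \xi = \xi \cdot \widetilde i_k(1_{A_k})$ is a consequence of (Q2). For the involution, $(\xi^{\dagger})^{\dagger} = \xi$ reduces, after straightening the $Q_k$-strand via the self-duality (F1), to the zig-zag identities for the induced duality together with separability (Q4); anti-multiplicativity $(\xi \cdot \eta)^{\dagger} = \eta^{\dagger} \cdot \xi^{\dagger}$ is a direct consequence of the Frobenius condition (Q3), which allows us to slide the multiplication vertex past the cap built from $ev_Q$.

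For the C*-structure, define $\pi : H_k \to \mcal L_{A_k}(H_k)$ by $\pi(\xi)(\eta) := \xi \cdot \eta$, where $\mcal L_{A_k}(H_k)$ denotes the C*-algebra of adjointable $A_k$-linear operators on the Hilbert $A_k$-module $H_k$ equipped with the inner product of \Cref{Hkinp}. Associativity of $\cdot$ shows $\pi$ is an algebra homomorphism. The essential identity is that $\pi$ preserves involution, i.e., $\langle \xi \cdot \eta , \zeta \rangle_{A_k} = \langle \eta , \xi^{\dagger} \cdot \zeta \rangle_{A_k}$ for $\xi,\eta,\zeta \in H_k$; this is a planar identity in $\mcal M_k$ that follows from the Frobenius condition (Q3) combined with the explicit formula for $\widetilde m_k^*$ established in the preceding proposition. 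Injectivity of $\pi$ is immediate, since $\pi(\xi)(\widetilde i_k(1_{A_k})) = \xi$ by the unital axiom. Transporting the operator norm via $\| \xi \| := \| \pi(\xi) \|_{\mcal L_{A_k}(H_k)}$ then endows $H_k$ with the structure of a unital C*-algebra; since $H_k$ is finite dimensional, no completion issue arises.

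The main obstacle will be the diagrammatic verification that $\pi(\xi)^{*} = \pi(\xi^{\dagger})$ as operators on the Hilbert $A_k$-module $H_k$. This is the only step that is not immediate from the algebraic Q-system axioms, as it requires translating the $A_k$-valued inner product of \Cref{Hkinp} into the string-diagram language, and then performing a manipulation that simultaneously invokes the Frobenius condition (Q3) and the self-duality of $Q_k$ from (F1). Once this identity is in hand, everything else follows formally from (Q1)--(Q4) and from the standard Hilbert C*-module machinery.
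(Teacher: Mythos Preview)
Your argument is correct: the left regular representation $\pi(\xi)(\eta)=\xi\cdot\eta$ embeds $H_k$ as a $*$-subalgebra of the finite-dimensional C*-algebra $\mcal L_{A_k}(H_k)$, and the adjoint identity $\pi(\xi)^*=\pi(\xi^\dagger)$ is indeed the Frobenius relation (Q3) unwound via the self-duality (F1). This is a clean and self-contained route.

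The paper takes a different but equally valid path: instead of representing $H_k$ on itself, it builds an explicit $*$-subalgebra $S_k\subset\t{End}(Q_k\Gamma_k\cdots\Gamma_1 m_0)$ (the left-$Q_k$-equivariant endomorphisms) and writes down mutually inverse unital $*$-isomorphisms $\phi_1^{(k)}:H_k\to S_k$ and $\phi_2^{(k)}:S_k\to H_k$. Your approach is arguably more conceptual and requires less diagram chasing. The paper's approach, however, is not just proving the proposition: the concrete algebra $S_k$ and the maps $\phi_1^{(k)},\phi_2^{(k)}$ are reused in an essential way later---$\phi_1^{(k)}\circ\widetilde i_k$ is used to verify injectivity of $A_k\hookrightarrow H_k$ and to build the conditional expectation in \Cref{HkCE}, and $S_l$ together with $\phi_2^{(l)}$ appears explicitly in the definition of the filtration $\{B_k\}$ in \Cref{2new0cells}. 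So if you follow your route, you will still need to introduce $S_k$ and these isomorphisms separately before those later constructions.
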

\begin{proof}
Indeed, $\xi^{\dagger \dagger}=\xi$.
Again
\[
(\xi_1 . \xi_2)^{\dagger}=
\raisebox{-12mm}{
	\begin{tikzpicture}
		\draw[dashed] (.5,.3) to (.5,.6);
		\draw (.3,.3) to (.3,.6);
		\draw (-.3,.3) to (-.3,.6);
		\draw[dashed] (.5,-.3) to (.5,-.6);
		\draw (.3,-.3) to (.3,-.6);
		\draw (-.3,-.3) to (-.3,-.6);
		\draw[dashed] (.5,-1.2) to (.5,-1.5);
		\draw (.3,-1.2) to (.3,-1.5);
		\draw (-.2,-1.2) to (-.2,-1.5);
		\draw[red] (-.65,-.28) to (-.65,-1.5);
		\draw[red] (-.3,-1.2) to (-.3,-1.5);
		\draw[red,in=-90,out=-90,looseness=2] (-.65,-1.5) to (-.3,-1.5);
		\draw[red] (-.5,-1.7) to (-.5,-1.9);
		\draw[red,in=-90,out=-90,looseness=2] (-.5,-1.9) to (-1,-1.9);
		\draw[red] (-1,-1.9) to (-1,.6);
		\node[scale=0.7] at (-.5,-1.7) {$\red{\bullet}$};
		\node[scale=0.7] at (-.75,-2.2) {$\red{\bullet}$};
		\draw[red] (-.75,-2.2) to (-.75,-2.4);
		\node[scale=0.7] at (-.75,-2.4) {$\red{\bullet}$};
		\node at (0,.4) {$\cdots$};
		\node at (0,-.45) {$\cdots$};
		\node at (.1,-1.4) {$\cdots$};
		\node[draw,thick,rounded corners,fill=white,minimum width=40] at (0,0) {${\xi_2}^*$};
		\node[draw,thick,rounded corners,fill=white,minimum width=30] at (0.1,-.9) {${\xi_1}^*$};
\end{tikzpicture}}
=
\raisebox{-12mm}{
	\begin{tikzpicture}
		\draw[dashed] (.5,.3) to (.5,.6);
		\draw (.3,.3) to (.3,.6);
		\draw (-.3,.3) to (-.3,.6);
		\draw[dashed] (.5,-.3) to (.5,-.6);
		\draw (.3,-.3) to (.3,-.6);
		\draw (-.3,-.3) to (-.3,-.6);
		\draw[dashed] (.5,-1.2) to (.5,-1.5);
		\draw (.3,-1.2) to (.3,-1.5);
		\draw (-.2,-1.2) to (-.2,-1.5);
		\draw[red] (-.65,-.28) to (-.65,-1.5);
		\draw[red] (-.3,-1.2) to (-.3,-1.5);
		\draw[red,in=-90,out=-90,looseness=2] (-.8,-1.7) to (-.3,-1.7);
		\draw [red] (-.3,-1.7) to (-.3,-1.5);
		\draw[red] (-.55,-2) to (-.55,-2.2);
		\draw[red,in=-90,out=-90,looseness=2] (-1,-1.5) to (-.65,-1.5);
		\draw[red] (-1,-1.5) to (-1,.6);
		\node[scale=0.7] at (-.8,-1.7) {$\red{\bullet}$};
		\node[scale=0.7] at (-.55,-2.2) {$\red{\bullet}$};
		\node[scale=0.7] at (-.55,-2) {$\red{\bullet}$};
		\node at (.1,-1.35) {$\cdots$};
		\node at (.05,-.45) {$\cdots$};
		\node at (.05,.45) {$\cdots$};
		\node[draw,thick,rounded corners,fill=white,minimum width=40] at (0,0) {${\xi_2}^*$};
		\node[draw,thick,rounded corners,fill=white,minimum width=30] at (0.1,-.9) {${\xi_1}^*$};
\end{tikzpicture}}=
\raisebox{-12mm}{
	\begin{tikzpicture}
		\draw[dashed] (.5,.3) to (.5,.6);
		\draw (.3,.3) to (.3,.6);
		\draw (-.3,.3) to (-.3,.6);
		\draw[dashed] (.5,-.3) to (.5,-.6);
		\draw (.3,-.3) to (.3,-.6);
		\draw (-.3,-.3) to (-.3,-.6);
		\draw[dashed] (.5,-1.2) to (.5,-1.5);
		\draw (.3,-1.2) to (.3,-1.5);
		\draw (-.2,-1.2) to (-.2,-1.5);
		\draw[red] (-.45,-.28) to (-.45,-.4);
		\draw[red,in=-90,out=-90,looseness=2] (-1.15,-1.2) to (-.3,-1.2);
		\draw[red] (-1.15,-1.2) to (-1.15,0);
		\draw[red] (-.72,-1.7) to (-.72,-2);
		\draw[red,in=-90,out=-90,looseness=2] (-.45,-.4) to (-.75,-.4);
		\draw[red] (-.75,-.4) to (-.75,0);
		\draw[red] (-.95,.22) to (-.95,.6);
		\draw[red,in=90,out=90,looseness=2] (-1.15,0) to (-.75,0);
		\node[scale=0.7] at (-.95,.22) {$\red{\bullet}$};
		\node[scale=0.7] at (-.72,-1.7) {$\red{\bullet}$};
		\node[scale=0.7] at (-.72,-2) {$\red{\bullet}$};
		\node[scale=0.7] at (-.6,-.8) {$\red{\bullet}$};
		\node[scale=0.7] at (-.6,-.6) {$\red{\bullet}$};
		\draw[red] (-.6,-.6) to (-.6,-.8);
		\node at (.1,-1.35) {$\cdots$};
		\node at (.05,-.45) {$\cdots$};
		\node at (.05,.45) {$\cdots$};
		\node[draw,thick,rounded corners,fill=white,minimum width=40] at (0.1,0) {${\xi_2}^*$};
		\node[draw,thick,rounded corners,fill=white,minimum width=30] at (0.1,-.9) {${\xi_1}^*$};
\end{tikzpicture}}
=\xi_2^{\dagger} . \xi_1^{\dagger}
\]
where the second equality follows from associativity and the third comes from Frobenius and unitality conditions.
Also,
\[
1_{H_k} \cdot \xi = 
	 \raisebox{-6mm}{	
	\begin{tikzpicture}
	\draw (0.4,0.28) to (.4,.8);
	\draw (-0.4,0.28) to (-.4,.8);
	\draw (0.4,-0.28) to (.4,-.8);
	\draw (-0.4,-.28) to (-.4,-.8);
	\draw[dashed] (0.5,0.28) to (.5,.8);
	\draw[dashed] (0.5,-0.28) to (.5,-.8);
	\node at (0.05,0.5) {$\cdots$};
	\node at (0.05,-0.5) {$\cdots$};
	\node[right,scale=0.6] at (-.4,0.7) {$\Gamma_k$};
	\node[left,scale=0.6] at (0.5,0.7) {$\Gamma_1$};
	\node[right,scale=.8] at (0.44,.44) {$m_0$};
	\draw[red] (-.76,.6) to (-.76,1);
	\draw[red] (-.56,.26) to (-.56,.4);
	\draw[red] (-.96,.4) to (-.96,-.4);
	\node[left,scale=0.6] at (-.9,.2) {$Q_k$};
	\draw[red,in=90,out=90,looseness=2] (-.56,.4) to (-.96,.4);
	\node at (-.96,-.4) {$\red{\bullet}$};
	\node at (-.76,.6) {$\red{\bullet}$};
	\node[draw,thick,rounded corners,fill=white,minimum width=40] at (0,0) {$\xi$};
	\end{tikzpicture}} =
\raisebox{-6mm}{	
	\begin{tikzpicture}
		\draw (0.4,0.28) to (.4,.8);
		\draw (-0.4,0.28) to (-.4,.8);
		\draw (0.4,-0.28) to (.4,-.8);
		\draw (-0.4,-.28) to (-.4,-.8);
		\draw[dashed] (0.5,0.28) to (.5,.8);
		\draw[dashed] (0.5,-0.28) to (.5,-.8);
		\node at (0.05,0.5) {$\cdots$};
		\node at (0.05,-0.5) {$\cdots$};
		\node[right,scale=0.6] at (-.4,0.7) {$\Gamma_k$};
		\node[left,scale=0.6] at (0.5,0.7) {$\Gamma_1$};
		\node[right,scale=.8] at (0.44,.44) {$m_0$};
		\draw[red] (-.56,.28) to (-.56,.8);
		\node[left,scale=0.6] at (-.5,.47) {$Q_k$};
		\node[draw,thick,rounded corners,fill=white,minimum width=40] at (0,0) {$\xi$};
\end{tikzpicture}} =
\raisebox{-6mm}{	
	\begin{tikzpicture}
	\draw (0.4,0.28) to (.4,.8);
	\draw (-0.4,0.28) to (-.4,.8);
	\draw (0.4,-0.28) to (.4,-.8);
	\draw (-0.4,-.28) to (-.4,-.8);
	\draw[dashed] (0.5,0.28) to (.5,.8);
	\draw[dashed] (0.5,-0.28) to (.5,-.8);
	\node at (0.05,0.5) {$\cdots$};
	\node at (0.05,-0.5) {$\cdots$};
	\node[right,scale=0.6] at (-.4,0.7) {$\Gamma_k$};
	\node[left,scale=0.6] at (0.5,0.7) {$\Gamma_1$};
	\node[right,scale=.8] at (0.44,.44) {$m_0$};
	\draw[red] (-.9,.28) to (-.9,.8);
	\draw[red,in=90,out=90,looseness=2] (-.9,.8) to (-.6,.8);
	\node[left,scale=0.6] at (-.85,.47) {$Q_k$};
	\node at (-.6,.7) {$\red{\bullet}$};
	\node[scale=.8] at (-.75,.95) {$\red{\bullet}$};
	\draw[red] (-.75,.95) to (-.75,1.2);
	\node[draw,thick,rounded corners,fill=white,minimum width=56] at (0,0) {$\xi$};
	\end{tikzpicture}} = \xi \cdot 1_{H_k}. 
\]
Hence, $H_k$ becomes a unital *-algebra.

To prove that $ H_k $ is a C*-algebra, we show that it is isomorphic to a *-subalgebra of a finite dimensional C*-algebra.
Define
\[
S_k \coloneqq  \left\{ x \in \t{End}(Q_k \Gamma_k \cdots \Gamma_1 m_0) \ \left|
 \raisebox{-6mm}{
 \begin{tikzpicture}
 \draw (0.4,0.2) to (.4,.8);
 \draw (-0.4,0.2) to (-.4,.8);
 \draw (0.4,-0.2) to (.4,-.8);
 \draw (-0.4,-.2) to (-.4,-.8);
 \draw[dashed] (0.5,0.2) to (.5,.8);
 \draw[dashed] (0.5,-0.2) to (.5,-.8);
 \node at (0.05,0.4) {$\cdots$};
 \node at (0.05,-0.5) {$\cdots$};
 \node[right,scale=0.6] at (-.4,0.6) {$\Gamma_k$};
 \node[left,scale=0.6] at (0.5,0.6) {$\Gamma_1$};
 \node[right] at (0.4,.4) {$m_0$};
 \draw[red] (-.96,.2) to (-.96,-.6);
 \node[left,scale=0.6] at (-.7,.6) {$Q_k$};
 \draw[red] (-.76,.4) to (-.76,.8);
 \draw[red,in=90,out=90,looseness=2] (-.56,.2) to (-.96,.2);
 \node at (-.76,.4) {$\red{\bullet}$};
 \node[draw,thick,rounded corners,fill=white,minimum width=40] at (0,0) {$x$};
 \end{tikzpicture}}=
\raisebox{-6mm}{
	\begin{tikzpicture}
	\draw (0.4,0.2) to (.4,.8);
	\draw (-0.4,0.2) to (-.4,.8);
	\draw (0.4,-0.2) to (.4,-.8);
	\draw (-0.4,-.2) to (-.4,-.8);
	\draw[dashed] (0.5,0.2) to (.5,.8);
	\draw[dashed] (0.5,-0.2) to (.5,-.8);
	\node at (0.05,0.4) {$\cdots$};
	\node at (0.05,-0.5) {$\cdots$};
	\node[right,scale=0.6] at (-.4,0.6) {$\Gamma_k$};
	\node[left,scale=0.6] at (0.5,0.6) {$\Gamma_1$};
	\node[right] at (0.4,.4) {$m_0$};
	\draw[red] (-.9,.2) to (-.9,.8);
	\node[left,scale=0.6] at (-.85,.4) {$Q_k$};
	\draw[red] (-.9,-.2) to (-.9,-.4);
	\draw[red,in=90,out=90,looseness=2] (-1.1,-.6) to (-.7,-.6);
	\node at (-.9,-.4) {$\red{\bullet}$};
	\draw[red] (-1.1,-.6) to (-1.1,-.8);
	\draw[red] (-.7,-.6) to (-.7,-.8);
	\node[draw,thick,rounded corners,fill=white,minimum width=56] at (0,0) {$x$};
	\end{tikzpicture}}
\right.  \right\}
\]
sitting inside the finite dimensional C*-algebra $\t{End}(Q_k\Gamma_k\cdots\Gamma_1 m_0)$.
Clearly $ S_k $ is closed under multiplication, as well as *-closed (using Frobenius property and unitality).
Define $\phi_1^{(k)} : H_k \to S_k$ and $\phi_2^{(k)} : S_k \to H_k$ as follows:\\
  $\phi_1^{(k)} (\xi) \coloneqq \raisebox{-6mm}{	
  	\begin{tikzpicture}
  	\draw (0.4,0.28) to (.4,.8);
  	\draw (-0.4,0.28) to (-.4,.8);
  	\draw (0.4,-0.28) to (.4,-.8);
  	\draw (-0.4,-.28) to (-.4,-.8);
  	\draw[dashed] (0.5,0.28) to (.5,.8);
  	\draw[dashed] (0.5,-0.28) to (.5,-.8);
  	\node at (0.05,0.47) {$\cdots$};
  	\node at (0.05,-0.5) {$\cdots$};
  	\node[right,scale=0.6] at (-.4,0.7) {$\Gamma_k$};
  	\node[left,scale=0.6] at (0.5,0.7) {$\Gamma_1$};
  	\node[right,scale=.8] at (0.44,.44) {$m_0$};
  	\draw[red] (-.86,.6) to (-.86,1);
	\draw[red] (-1.2,.25) to (-1.2,-.65);
	\draw[red,in=90,out=90,looseness=2] (-.56,.25) to (-1.2,.25);
	\node at (-.86,.6) {$\red{\bullet}$};
  	\node[left,scale=0.6] at (-.8,.81) {$Q_k$};
  	\node[draw,thick,rounded corners,fill=white,minimum width=40] at (0,0) {$\xi$};
  	\end{tikzpicture}} \in S_k$ (by associativity of $Q_k$) 
and $\phi_2^{(k)} (x) \coloneqq \raisebox{-6mm}{	
  	\begin{tikzpicture}
  	\draw (0.4,0.2) to (.4,.8);
  	\draw (-0.4,0.2) to (-.4,.8);
  	\draw (0.4,-0.2) to (.4,-.8);
  	\draw (-0.4,-.2) to (-.4,-.8);
  	\draw[dashed] (0.5,0.2) to (.5,.8);
  	\draw[dashed] (0.5,-0.2) to (.5,-.8);
  	\node at (0.05,0.4) {$\cdots$};
  	\node at (0.05,-0.5) {$\cdots$};
  	\node[right,scale=0.6] at (-.4,0.6) {$\Gamma_k$};
  	\node[left,scale=0.6] at (0.5,0.6) {$\Gamma_1$};
  	\node[right] at (0.4,.4) {$m_0$};
  	\draw[red] (-.9,.2) to (-.9,.8);
  	\node[left,scale=0.6] at (-.85,.4) {$Q_k$};
  	\draw[red] (-.9,-.2) to (-.9,-.8);
  	\node at (-.9,-.8) {$\red{\bullet}$};
  	\node[draw,thick,rounded corners,fill=white,minimum width=56] at (0,0) {$x$};
  	\end{tikzpicture}} $.
Now, it is routine to check using the axioms of $Q$-systems that $\phi_1^{(k)}$ and $\phi_2^{(k)}$ are unital, *-homomorphisms. Also, $\phi_1^{(k)}$ and $\phi_2^{(k)}$ are mutually inverse to each other, hence they are isomorphisms.   	             
\end{proof}
\begin{lem}\label{HkCE}
The map $ \widetilde i_k : A_k \to H_k $ defined by $\widetilde i_k(a) \coloneqq
\raisebox{-8mm}{
	\begin{tikzpicture}
		\node[draw,thick,rounded corners,minimum width=35] at (0,0) {$a$};
		\draw (0.4,0.2) to (.4,.8);
		\draw (-0.4,0.2) to (-.4,.8);
		\draw (0.4,-0.2) to (.4,-.8);
		\draw (-0.4,-.2) to (-.4,-.8);
		\draw[dashed] (0.5,0.2) to (.5,.8);
		\draw[dashed] (0.5,-0.2) to (.5,-.8);
		\node at (0.05,0.4) {$\cdots$};
		\node at (0.05,-0.5) {$\cdots$};
		\node[right,scale=0.6] at (-.4,0.6) {$\Gamma_k$};
		\node[left,scale=0.6] at (0.5,0.6) {$\Gamma_1$};
		\node[right] at (0.4,.4) {$m_0$};
		\draw[red] (-.7,-.7) to (-.7,.7);
		\node at (-.7,-.7) {$\red{\bullet}$};
		\node[left,scale=0.7] at (-.7,0) {$Q_k$};
\end{tikzpicture}} $ is a unital inclusion of {\normalfont C*}-algebras.
In the reverse direction, the map $ E_k : H_k \ra A_k $ defined by $E_k(\xi) \coloneqq 
\raisebox{-7mm}{
\begin{tikzpicture}
		\draw (0.6,-.8) to (.6,.8);
		\draw (-0.2,-.8) to (-.2,.8);
		\draw[dashed] (0.7,-.8) to (.7,.8);
		\node at (0.15,0.4) {$\cdots$};
		\node at (0.15,-0.5) {$\cdots$};
		\node[right,scale=0.6] at (-.25,0.66) {$\Gamma_k$};
		\node[left,scale=0.6] at (0.7,0.66) {$\Gamma_1$};
		\node[right] at (0.6,.47) {$m_0$};
		\draw[red] (-.4,.2) to (-.4,.7);
		\node[scale=.7] at (-.6,.47) {$Q_k$};
		\node at (-.4,.7) {$\red{\bullet}$};
		\node[draw,thick,rounded corners,fill=white,minimum width=48] at (.2,0) {$\xi$};
		\node[draw,thick,rounded corners] at (-1.4,0) {$d_{Q_k} ^{-1}$};
\end{tikzpicture}}$ (where  $d_{Q_k} = \raisebox{-4mm}{\begin{tikzpicture}
\draw[red] (0,0) to (0,.8);
\node at (0,0) {$\red{\bullet}$};
\node at (0,.8) {$\red{\bullet}$};
\node[scale=.7] at (-.18,.4) {$Q_k$};
\end{tikzpicture}}$ ) is a finite index, faithful conditional expectation satisfying $ E_k \left( \eta^\dagger \cdot \xi \right)  = 
\raisebox{-9 mm}{
\begin{tikzpicture}
\draw (-.4,-.9) to (-.4,.9);
\draw (.4,-.9) to (.4,.9);
\draw[dashed] (.6,-.9) to (.6,.9);
\node at (0,.5) {$ \cdots $};
\node at (0,-.5) {$ \cdots $};
\node at (.9,.7) {$ m_0 $};
\node at (.9,-.7) {$ m_0 $};
\node at (-.6,.7) {$ \Gamma_k $};
\node at (-.6,-.7) {$ \Gamma_k $};
\node[draw,fill=white,thick,rounded corners] at (.1,0) {$\left\lab \xi , \eta \right \rab_{A_k}$};
\node[draw,thick,rounded corners] at (-1.4,0) {$d_{Q_k} ^{-1}$};
\end{tikzpicture}}$ (where $ \left\lab \cdot , \cdot \right \rab_{A_k} $ is the right $ A_k $-valued inner product on $ H_k $ as defined in \Cref{Hkinp}) for each $k \geq l \ $.
\end{lem}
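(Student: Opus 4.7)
The plan is to unpack the graphical calculus and invoke the Q-system axioms (Q1)--(Q4) together with the inequalities in (F2) and the properties of $d_{Q_k}^{-1}, s_{Q_k}$ from (F3). I would proceed in three stages.

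First, I would verify that $\widetilde{i}_k$ is a unital $*$-algebra homomorphism into the C*-algebra $H_k$ of \Cref{HkCstar}. Unitality is immediate on comparing pictures with the unit $1_{H_k}$ of the previous proposition. Multiplicativity $\widetilde{i}_k(a)\cdot\widetilde{i}_k(b)=\widetilde{i}_k(ab)$ reduces, after stacking the pictures, to the relation $m_k\circ(i_k\boxtimes i_k)=i_k$, which is (Q2). The identity $\widetilde{i}_k(a^*)=\widetilde{i}_k(a)^{\dagger}$ follows from the formula for $\dagger$ by sliding the multiplication cup past the $i_k$-vertex using Frobenius (Q3) and then collapsing the resulting bigon with (Q2). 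Injectivity will be a consequence of the splitting identity established below.

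Second, I would establish the key formula $E_k(\eta^\dagger\cdot\xi)=d_{Q_k}^{-1}\cdot\langle\xi,\eta\rangle_{A_k}$ by graphical computation. In the stacked picture, the $m_k$-vertex (from the multiplication in $H_k$) sits directly beneath the $i_k^*$-cap (from $E_k$); applying Frobenius (Q3) and unitality (Q2) collapses this combination to a single $Q_k$-strand. A second Frobenius move unbends the cup coming from $\eta^\dagger$, and separability (Q4) removes the residual loop, leaving precisely $\eta^*\circ\xi=\langle\xi,\eta\rangle_{A_k}$ pre-multiplied by $d_{Q_k}^{-1}$. Bimodularity of $E_k$ over $\widetilde{i}_k(A_k)$ then drops out from the bimodularity of $\langle\cdot,\cdot\rangle_{A_k}$. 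The splitting identity $E_k\circ\widetilde{i}_k=\mathrm{id}_{A_k}$ is the direct computation $E_k(\widetilde{i}_k(a))=d_{Q_k}^{-1}\cdot d_{Q_k}\cdot a$; by (F3)(a), the composition $d_{Q_k}^{-1}\cdot d_{Q_k}$ reduces on the $Q_k$-strand appearing in the diagrammatic formula for $\widetilde{i}_k(a)$ to the identity, yielding $a$.

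Third, for faithfulness: if $E_k(\xi^\dagger\cdot\xi)=0$, then $d_{Q_k}^{-1}\cdot\langle\xi,\xi\rangle_{A_k}=0$, so $s_{Q_k}\cdot\langle\xi,\xi\rangle_{A_k}=0$, and another application of (F3)(a) forces $\langle\xi,\xi\rangle_{A_k}=0$, whence $\xi=0$ by positivity of the $A_k$-valued inner product. For the finite index bound, sandwiching the middle inequality in (F2) evaluated at $\Gamma_k\cdots\Gamma_1 m_0$ with $\xi$ on top and $\xi^\dagger$ on the bottom produces the Pimsner--Popa bound $\xi^\dagger\cdot\xi\le \|d_{Q_k}\|\cdot\widetilde{i}_k(E_k(\xi^\dagger\cdot\xi))$, i.e.\ a finite index constant $\|d_{Q_k}\|$.

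The main obstacle is the Frobenius manipulation in the second stage, where careful bookkeeping of strand positions is required so that the $Q_k$-loops close correctly and no spurious factors remain; the subordinate issue of $s_{Q_k}$ versus $1_{A_k}$ is handled uniformly by (F3)(a), since every diagram arising in the argument contains a $Q_k$-strand on which $s_{Q_k}$ may be absorbed.
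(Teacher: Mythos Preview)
Your approach is correct but differs from the paper's in its organizing principle. The paper does not work directly in $H_k$; instead it invokes the $*$-isomorphism $\phi_1^{(k)}:H_k\to S_k$ from \Cref{HkCstar} and carries out the entire argument inside $S_k\subset\t{End}(Q_k\Gamma_k\cdots\Gamma_1 m_0)$. There, $\widetilde i_k$ corresponds to $a\mapsto Q_k\,a$, whose injectivity is immediate from bi-faithfulness of $Q_k$, and the conditional expectation $E'$ arises from the standard ``cap off the self-dual strand'' construction using (F1). Faithfulness and the finite-index bound then come from (F2)/(F3) applied to a genuine endomorphism $x\in S_k^+$, with no need to first express $x$ as $\xi^\dagger\cdot\xi$. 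Your route---establish the formula $E_k(\eta^\dagger\cdot\xi)=d_{Q_k}^{-1}\lab\xi,\eta\rab_{A_k}$ first and deduce everything from it---is more hands-on and avoids the $S_k$ detour, at the cost of the Frobenius bookkeeping you flag. One small point: in your splitting computation $E_k(\widetilde i_k(a))=s_{Q_k}\cdot a$, there is no $Q_k$-strand left in the output (it lives in $A_k$), so (F3)(a) alone does not absorb $s_{Q_k}$; what actually forces $s_{Q_k}=1$ here is bi-faithfulness of $Q_k$ combined with (F3)(a), since $(s_{Q_k})_v\neq 1$ for some simple $v$ would give $Q_k v=0$. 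The paper's proof sidesteps this by reading the expectation off the self-duality data, but the same implicit use of bi-faithfulness is present there too.
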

\begin{proof}
We make use of the *-algebra isomorphisms $ \phi_1, \phi_2 $ between $ H_k $ and $ S_k $ and find that the map $ \widetilde i_k : A_k \ra H_k $ corresponds to $ A_k \ni a \os{\displaystyle \phi_1^{(k)} \circ \widetilde i_k}{\longmapsto} Q_k \, a  = \raisebox{-6 mm}{
\begin{tikzpicture}
\draw (0.3,-0.6) to (.3,.6);
\draw (-0.3,-0.6) to (-.3,.6);
\draw[dashed] (.5,-.6) to (.5,.6);
\node at (0.8,.4) {$m_0$};
\node at (0.8,-.45) {$m_0$};
\draw[red] (-.6,-.6) to (-.6,.6);
\node at (-.85,0) {$Q_k$};
\node at (0.04,0.4) {$\cdots$};
\node at (0.04,-0.4) {$\cdots$};
\node[draw,thick,rounded corners,fill=white,minimum width=30] at (0.1,0) {$a$};
\end{tikzpicture}
}
\in S_k $ which is indeed an inclusion since $ Q_k $ is a bi-faithful functor.
Now, $ Q_k $ is symmetrically self-dual with the solution to conjugate equation given by \raisebox{-5 mm}{
\begin{tikzpicture}
\draw[red] (-.4,.2) to[in=180,out=-90] (0,-.2) to[out=0,in=-90] (.4,.2);
\draw[red] (0,-.2) to (0,-.6);
\node at (0,-.6) {\red{$\bullet$}};
\node at (0,-.2) {\red{$\bullet$}};
\end{tikzpicture}
}.
Thus, we have a conditional expectation given by
\[
S_k \ni x \os {\displaystyle E'} \longmapsto  \raisebox{-10 mm}{
\begin{tikzpicture}
\draw (0.3,-1) to (.3,1);
\draw (-0.3,-1) to (-.3,1);
\draw[dashed] (.5,-1) to (.5,1);
\node at (0.8,.4) {$m_0$};
\node at (0.8,-.45) {$m_0$};
\draw[red] (-.5,-.3) to (-.5,.3) to[out=90,in=0] (-.7,.5) to[out=180,in=90] (-.9,.3) to (-.9,-.3) to[out=-90,in=-180] (-.7,-.5) to[out=0,in=-90] (-.5,-.3);
\draw[red] (-.7,.5) to (-.7,.9);
\draw[red] (-.7,-.5) to (-.7,-.9);
\node at (-.7,.5) {\red{$\bullet$}};
\node at (-.7,-.5) {\red{$\bullet$}};
\node at (-.7,.9) {\red{$\bullet$}};
\node at (-.7,-.9) {\red{$\bullet$}};
\node at (0.04,0.4) {$\cdots$};
\node at (0.04,-0.4) {$\cdots$};
\node[draw,thick,rounded corners,fill=white,minimum width=40] at (0,0) {$x$};
\node[draw,thick,rounded corners] at (-1.5,0) {$d^{-1}_{Q_k}$};
\end{tikzpicture}
} =   \raisebox{-9 mm}{
\begin{tikzpicture}
	\draw (0.3,-1) to (.3,1);
	\draw (-0.3,-1) to (-.3,1);
	\draw[dashed] (.5,-1) to (.5,1);
	\node at (0.8,.4) {$m_0$};
	\node at (0.8,-.45) {$m_0$};
\draw[red] (-.5,-.6) to (-.5,.6);
\node at (-.5,-.6) {\red{$\bullet$}};
\node at (-.5,.6) {\red{$\bullet$}};
	\node at (0.04,0.4) {$\cdots$};
	\node at (0.04,-0.4) {$\cdots$};
	\node[draw,thick,rounded corners,fill=white,minimum width=40] at (0,0) {$x$};
	\node[draw,thick,rounded corners] at (-1.4,0) {$d^{-1}_{Q_k}$};
\end{tikzpicture}
} \in A_k 
\]
where the equality follows from the definition of $ S_k $ and separability axiom.
This conditional expectation is automatically faithful and translates into $ E_k $ (defined in the statement) via the *-isomorphism $ \phi_2^{(k)} $.
Now, for $x \in S_k ^ +$, we have
\[\raisebox{-8mm}{
	\begin{tikzpicture}
		\draw (0.4,0.2) to (.4,.8);
		\draw (-0.4,0.2) to (-.4,.8);
		\draw (0.4,-0.2) to (.4,-.8);
		\draw (-0.4,-.2) to (-.4,-.8);
		\draw[dashed] (0.5,0.2) to (.5,.8);
		\draw[dashed] (0.5,-0.2) to (.5,-.8);
		\node at (0.05,0.4) {$\cdots$};
		\node at (0.05,-0.5) {$\cdots$};
		\node[right,scale=0.6] at (-.4,0.6) {$\Gamma_k$};
		\node[left,scale=0.6] at (0.5,0.6) {$\Gamma_1$};
		\node[right] at (0.4,.4) {$m_0$};
		\draw[red] (-.6,.2) to (-.6,.7);
		\draw[red] (-.6,-.2) to (-.6,-.7);
		\node[draw,thick,rounded corners,fill=white,minimum width=48] at (0,0) {$x$};
		\end{tikzpicture}}
= 
\raisebox{-12mm}{
	\begin{tikzpicture}
		\draw (0.4,0.2) to (.4,.8);
		\draw (-0.4,0.2) to (-.4,.8);
		\draw (0.4,-0.2) to (.4,-.8);
		\draw (-0.4,-.2) to (-.4,-.8);
		\draw[dashed] (0.5,0.2) to (.5,.8);
		\draw[dashed] (0.5,-0.2) to (.5,-.8);
		\node at (0.05,0.4) {$\cdots$};
		\node at (0.05,-0.5) {$\cdots$};
		\node[right,scale=0.6] at (-.4,0.6) {$\Gamma_k$};
		\node[left,scale=0.6] at (0.5,0.6) {$\Gamma_1$};
		\node[right] at (0.4,.4) {$m_0$};
		\draw[red] (-.6,.2) to (-.6,.7);
		\draw[red] (-.6,-.2) to (-.6,-.7);
		\draw[red,in=90,out=90,looseness=2] (-.6,.7) to (-1.1,.7);
		\draw[red,in=-90,out=-90,looseness=2] (-1.1,.7) to (-1.6,.7);
		\draw[red] (-1.6,.7) to (-1.6,1);
		\draw[red,in=-90,out=-90,looseness=2] (-.6,-.7) to (-1.1,-.7);
		\draw[red,in=90,out=90,looseness=2] (-1.1,-.7) to (-1.6,-.7);
		\draw[red] (-1.6,-.7) to (-1.6,-1);
		\node[scale=0.7] at (-1.35,-.4) {$\red{\bullet}$};
		\node[scale=0.7] at (-.85,1) {$\red{\bullet}$};
		\node[scale=0.7] at (-1.35,-.2) {$\red{\bullet}$};
		\node[scale=0.7] at (-1.35,.35) {$\red{\bullet}$};
		\node[scale=0.7] at (-.85,-1) {$\red{\bullet}$};
		\draw[red] (-1.35,-.4) to (-1.35,-.2);
		\draw[red] (-1.35,.35) to (-1.35,.15);
		\node[scale=0.7] at (-1.35,.15) {$\red{\bullet}$};
		\draw[red] (-.85,1) to (-.85,1.2);
		\node[scale=0.7] at (-.85,1.2) {$\red{\bullet}$};
		\draw[red] (-.85,-1) to (-.85,-1.2);
		\node[scale=0.7] at (-.85,-1.2) {$\red{\bullet}$};
		\node[draw,thick,rounded corners,fill=white,minimum width=48] at (0,0) {$x$};
\end{tikzpicture}} \leq \norm{d_{Q_k}}
\raisebox{-8mm}{
	\begin{tikzpicture}
		\draw[red] (-1,.8) to (-1,-.8);
		\node[scale=0.7] at (-.6,.7) {$\red{\bullet}$};
		\node[scale=0.7] at (-.6,-.7) {$\red{\bullet}$};
		\draw (0.4,0.2) to (.4,.8);
		\draw (-0.4,0.2) to (-.4,.8);
		\draw (0.4,-0.2) to (.4,-.8);
		\draw (-0.4,-.2) to (-.4,-.8);
		\draw[dashed] (0.5,0.2) to (.5,.8);
		\draw[dashed] (0.5,-0.2) to (.5,-.8);
		\node at (0.05,0.4) {$\cdots$};
		\node at (0.05,-0.5) {$\cdots$};
		\node[right,scale=0.6] at (-.4,0.6) {$\Gamma_k$};
		\node[left,scale=0.6] at (0.5,0.6) {$\Gamma_1$};
		\node[right] at (0.4,.4) {$m_0$};
		\draw[red] (-.6,.2) to (-.6,.7);
		\draw[red] (-.6,-.2) to (-.6,-.7);
		\node[draw,thick,rounded corners,fill=white,minimum width=48] at (0,0) {$x$};
\end{tikzpicture}}
\]	
where the first equality follows from (F1) of \Cref{F1} and the second inequality from (F2) of \Cref{F1} and the definition of $ S_k $.
We rewrite the last term as
$
\raisebox{-8mm}{
	\begin{tikzpicture}
		\draw[red] (-1.8,.8) to (-1.8,-.8);
		\draw[red] (-1.6,.5) to (-1.6,-.5);
		\node[scale=0.7] at (-1.6,.5) {$\red{\bullet}$};
		\node[scale=0.7] at (-1.6,-.5) {$\red{\bullet}$};
		\node[scale=0.7] at (-.6,.7) {$\red{\bullet}$};
		\node[scale=0.7] at (-.6,-.7) {$\red{\bullet}$};
		\node[draw,thick,rounded corners,scale=0.7] at (-1.2,0) {$d_{Q_k}^{-1}$};
		\draw (0.4,0.2) to (.4,.8);
		\draw (-0.4,0.2) to (-.4,.8);
		\draw (0.4,-0.2) to (.4,-.8);
		\draw (-0.4,-.2) to (-.4,-.8);
		\draw[dashed] (0.5,0.2) to (.5,.8);
		\draw[dashed] (0.5,-0.2) to (.5,-.8);
		\node at (0.05,0.4) {$\cdots$};
		\node at (0.05,-0.5) {$\cdots$};
		\node[right,scale=0.6] at (-.4,0.6) {$\Gamma_k$};
		\node[left,scale=0.6] at (0.5,0.6) {$\Gamma_1$};
		\node[right] at (0.4,.4) {$m_0$};
		\draw[red] (-.6,.2) to (-.6,.7);
		\draw[red] (-.6,-.2) to (-.6,-.7);
		\node[draw,thick,rounded corners,fill=white,minimum width=48] at (0,0) {$x$};
\end{tikzpicture}}
\leq
\norm{d_{Q_k}}\ Q_k \left(E'(x)\right)$.
Hence, the conditional expectation $ E' $, and thereby $ E_k $	has finite index.
\end{proof}

\comments{\begin{cor}\label{HkQsystem}
	$_{A_k}{H_k}_{A_k}$ is isomorphic to $_{H_k}{H_k}_{H_k}$ as Q-systems in {\normalfont \textbf{C*Alg}}.
\end{cor}
\begin{proof}
Consider the inclusion $A_k \os{\displaystyle \widetilde i_k}\hookrightarrow H_k$.
Using the finite Pimsner-Popa basis of the right $ A_k $-correspondence $ H_k $ (see Lemma 3.3 DGGJ)  and the relation between the right $ A_k $-valued inner product and the conditional expectation $ E_k :H_k \ra A_k $ in \Cref{HkCE}, one may obtain a finite right Pimsner-Popa basis for $ E_k $.
Since $ E_k $ is  a faithful, finite index conditional expectation, by [example 3.25 CHJP], we have the result.  
\end{proof}}

Next, we will test the compatibility of the countable family of finite dimensional C*-algebras $ \left\{H_k\right\}_{k\geq 0} $ and the inclusions $ H_k \os {\displaystyle I_{k+1}}\hookrightarrow H_{k+1} $ for $ k\geq 0 $ (as described in \ref{Hkinclusion}).
\begin{lem}
The inclusion $ H_k \os {\displaystyle I_{k+1}}\hookrightarrow H_{k+1} $ is a $ * $-algebra homomorphism eventually for all $ k $.
Further, the unital filtration $ \left\{A_k\right\}_{k\geq 0} $ of finite dimensional C*-algebras (as described \ref{Akinclusion}) sits inside $ H_\infty = \us k \cup H_k $ via the inclusions $ \widetilde i_k: A_k \ra H_k $ eventually for all $ k $.
In particular, the above conditions commence when $ (m_k, m_{k+1}) $ and $ (i_k, i_{k+1}) $ start satisfying the exchange relation.
\end{lem}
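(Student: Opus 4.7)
The plan is to verify the three $*$-algebra conditions (unit preservation, multiplicativity, involution compatibility) for $I_{k+1}:H_k\hookrightarrow H_{k+1}$ directly from the pictorial definitions, using the exchange relations between $m_\bullet, i_\bullet$ and the unitary connection $W^Q_\bullet$ that, by \Cref{lremark}, hold for all $k\ge l$. First I would check unit preservation. By definition $1_{H_k}=\widetilde i_k(1_{A_k})$ is the morphism obtained by capping off a $Q_k$-strand with $i_k$, and $I_{k+1}$ stacks $\Delta_{k+1}$ above, then composes with $(W^Q_{k+1})$ at the $Q_k$-strand. Sliding the $i_k$ cap through the crossing using the exchange relation for $i_\bullet$ converts the picture to $\widetilde i_{k+1}(1_{A_{k+1}})=1_{H_{k+1}}$, which is exactly what is needed. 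Moreover the same slide, performed on $\widetilde i_k(a)$ for a general $a\in A_k$, gives $I_{k+1}(\widetilde i_k(a))=\widetilde i_{k+1}(a)$ (where $a\in A_k$ is viewed in $A_{k+1}$ via the inclusion~\eqref{Akinclusion}), which is precisely the second assertion of the lemma.

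Next I would check multiplicativity. Under the identification of \Cref{connesfus}, the product $\xi\cdot\eta$ is the picture obtained by stacking $\eta$ below $\xi$ and joining the two $Q_k$-strands by $m_k$. Applying $I_{k+1}$ amounts to tensoring on the left with $\Delta_{k+1}$ (really $\Gamma_{k+1}$ since $\Delta=\Gamma$ for this endomorphism $1$-cell) and composing with $W^Q_{k+1}$ at the outgoing $Q_k$-strand. Using naturality of $W^Q_{k+1}$ to pass it down past each of $\xi$ and $\eta$, and then the exchange relation for $m_\bullet$ to commute $W^Q_{k+1}$ through $m_k$ (converting a crossing-over-$m_k$ into a $m_{k+1}$ with two crossings below it), the resulting diagram is precisely $I_{k+1}(\xi)\cdot I_{k+1}(\eta)$. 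The routine calculation is just repeated application of naturality and the single exchange identity for $m_k$.

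For the involution, recall from \Cref{HkCstar} that $\xi^\dagger$ is built from $\xi^*$, the multiplication $m_k$, and a cap-shape produced from $i_k$ and $m_k^*$ (giving a unitarily separable self-duality for $Q_k$). Each of the building blocks $m_k, m_k^*, i_k, i_k^*$ satisfies an exchange relation with $W^Q_{k+1}$ for $k\ge l$ (the starred versions being obtained from the unstarred ones by taking adjoints of both sides, since $W^Q_{k+1}$ is unitary). Thus the same slide-through argument, applied separately to each component in the formula for $\xi^\dagger$, shows $I_{k+1}(\xi^\dagger)=I_{k+1}(\xi)^\dagger$.

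The only real bookkeeping issue is keeping track of which level each $m, i, W^Q$ lives at during the slides; since all the relevant exchange relations hold simultaneously for all $k\ge l$, there is no genuine obstacle. Combining the three steps with the first-part consequence $I_{k+1}\circ\widetilde i_k=\widetilde i_{k+1}$ (noted above) establishes both conclusions of the lemma, with the threshold being exactly the level $l$ from which $(m_k,m_{k+1})$ and $(i_k,i_{k+1})$ satisfy exchange, as claimed.
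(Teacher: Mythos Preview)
Your proposal is correct and follows exactly the approach the paper indicates: the paper's proof is the single sentence ``This easily follows from the exchange relation of $m_k$ and $i_k$, and the definitions of $\widetilde m_k$ and $\widetilde i_k$,'' and you have simply unpacked that sentence into its three constituent checks (unit, product, involution) plus the compatibility $I_{k+1}\circ\widetilde i_k=\widetilde i_{k+1}$. There is no meaningful difference in strategy.
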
 
\begin{proof}
	This easily follows from the exchange relation of $m_k$ and $i_k$, and the definitions of $ \widetilde m_k $ and $ \widetilde i_k $.
\end{proof}

\begin{rem}\label{Hkbasis}
We can obtain $\mscr S_k \subset H_k$ such that $\underset{\sigma \in \mscr S_k}{\sum}\sigma \sigma^* = 1_{Q_k \Gamma_k \cdots \Gamma_1 m_0}$ using  \Cref{PPbasispreC*} and \Cref{Hkinclusion} .
\end{rem}

\section{Splitting of $(Q_\bullet, m_\bullet, i_\bullet)$}

In this section we will first construct a suitable $0$-cell in $\textbf{UC}$ using results from the previous section. Then move on to construct a dualizable $1$-cell $X_\bullet$ from $\Gamma_\bullet$ to the newly constructed $0$-cell. Subsequently we build a unitary from $\ol X_\bullet \boxtimes X_\bullet$ to $Q_\bullet$ which intertwine the algebra maps as well as satisfy exchange relations eventually.

\vspace*{2mm}

\noindent \textit{Notation:} Thoughout this section, given a finite dimensional C*-algebra $ A $, we will use the notation $ \mcal R_A $ for the category of finite-dimensional (as a complex vector space) right $A$-correpondences.
Note that $\mcal R_{A}$ is a finite, semisimple C*-category.

\comments{Using the results from previous section, we first construct a bi-faithful functor from $X_k : \mcal M_k \to \mcal R_{G_k}$ for each $k \geq 0 \ $. Then proceed to establish an isomorphism of Q-systems between $\bar{X}_k X_k$ and $Q_k$ for each $k \geq l$. In the subsequent section we will then construct a suitable $0$-cell $\left(\Delta_\bullet, \mcal R_{G_\bullet} \right)$ in $\textbf{UC}$ and a suitable unitary connection $W_\bullet$ so that $\left(X_\bullet, W_\bullet \right)$ becomes a dualizable $1$-cell in $\textbf{UC}$. In the final step we show that the isomorphism of Q-systems between $\bar{X}_k X_k$ and $Q_k$ for $k \geq l$, satisfy exchange relations for $k \geq l \ $.}

\subsection{New $0$-cells in $\textbf{UC}$}\label{2new0cells}\

Let $ l \in \N $ be as in \Cref{lremark}

For each $k \geq 0$, consider the C*-algebra inclusions $A_k \os{\displaystyle Q_k}\hookrightarrow C_k \coloneqq \t{End}(Q_k \Gamma_k \cdots \Gamma_{1} m_0) \ $ and $ C_k \ni \gamma \longmapsto \raisebox{-6 mm}{
\begin{tikzpicture}
\draw[dashed] (.5,-.7) to (.5,.7);
\draw (.3,-.7) to (.3,.7);
\draw (-.3,-.7) to (-.3,.7);
\draw (-.5,-.7) to[out=90,in=-90] (-.7,-.3) to (-.7,.3) to[out=90,in=-90] (-.5,.7);
\draw[white, line width=1mm] (-.7,-.7) to[out=90,in=-90] (-.5,-.3) to (-.5,.3) to[out=90,in=-90] (-.7,.7);
\draw[red] (-.7,-.7) to[out=90,in=-90] (-.5,-.3) to (-.5,.3) to[out=90,in=-90] (-.7,.7);
\node at (0,.5) {$ \cdots $};
\node at (0,-.5) {$ \cdots $};
\node[draw,thick,rounded corners,fill=white,minimum width=35] at (0,0) {$\gamma$};
\end{tikzpicture}} \in C_{k+1} $.
Note that $Q_k \left(A_k\right) \subset S_k \subset C_k $ for all $ k \geq l $.
Consider the filtration of C*-algebras $\left\{B_k \right\}_{k \geq 0}$ defined as follows:
\[B_k = \begin{cases}
H_k & \ \ \ \ \t{if} \ k \geq l \\
S_l \cap  C_{k} &  \ \ \ \ \t{if} \ 0 \leq k \leq l-1 
\end{cases}\]
where the inclusion $B_k \hookrightarrow B_{k+1}$ is given by $ I_k $ for $ k\geq l  $, set inclusions for $0 \leq k \leq l-2$ and the remaining inclusion $ B_{l-1} \hookrightarrow B_l $ is $\left.\phi_2^{(l)}\right|_{S_l \cap C_{l-1}} : S_l \cap C_{l-1} \to H_l$ (where $ \phi^{(l)}_2 :S_l \ra H_l$ is the isomorphism defined in \Cref{HkCstar}).


Define $\Delta_{k+1} \coloneqq \bullet \us{B_k} \boxtimes B_{k+1} : \mcal R_{B_k} \to \mcal R_{B_{k+1}}$ 
\comments{
as follows.
\[\Delta_{k+1} = \begin{cases}
 \bullet \us{H_k} \boxtimes H_{k+1} & \ \ \ \ \t{if} \ k \geq l \\
Id_{\mcal R_{G_k}} &  \ \ \ \ \t{if} \ 0 \leq k \leq l-1 
\end{cases}\]
}
for $k \geq 0$.
Each $\Delta_k$ is a bi-faithful functor (which follows from the unital inclusion $ B_k \hookrightarrow B_{k+1} $ of finite dimensional C*-algebra for $k \geq 0$).
Thus, we have a $0$-cell $\left(\Delta_\bullet, \mcal R_{B_\bullet} \right) \in \textbf{UC}_0 \ $.

\vspace*{4mm}

Similarly, using the unital filtration $ \left\{A_k\right\}_{k\geq 0} $ (resp., $ \left\{C_k\right\}_{k\geq 0} $) of finite dimensional C*-algebras, we define another $ 0 $-cell  $ \Sigma_\bullet $ (resp. $ \Psi_\bullet $)
defined by $\Sigma_k \coloneqq \bullet \us {A_{k-1}} \btimes A_k\ :\ \mcal R_{A_{k-1}} \ra \mcal R_{A_k} $ (resp., $\Psi_k \coloneqq \bullet \us {C_{k-1}} \btimes C_k\ :\ \mcal R_{C_{k-1}} \ra \mcal R_{C_k} $) for $ k\geq 1 $.

\subsection{Construction of dualizable $1$-cell from $\left(\Gamma_\bullet , \mcal M_\bullet\right)$ to $\left(\Delta_\bullet , \mcal R_{B_\bullet}\right)$} \
\comments{
Our goal in this subsection is to construct bi-faithful functors $X_k : \mcal M_k \to \mcal R_{G_k}$ for each $k \geq 0 \ $ in such a way, so that $X_\bullet$ becomes a dualizable $1$-cell from $\Gamma_\bullet$ to $\Delta_\bullet$ in $\textbf{UC}$. Then in the subsequent subsections we prove that $\ol X_k X_k$ becomes unitarily isomorphic to $Q_k$ as Q-systems for $k \geq l$ and the unitaries satisfy exchange relations eventually . 

\vspace*{2mm}}

Our strategy is to build two dualizable $1$-cells $ \left( F_\bullet , W_\bullet^F \right) : \Gamma_\bullet \to \Sigma_\bullet$ and $\left(\Lambda_{\bullet}, W_\bullet^\Lambda \right) : \Sigma_\bullet \to \Delta_\bullet$ and define $\left(X_\bullet, W_\bullet\right)$ to be their composition in $\textbf{UC}$ as depicted in \ref{tensor1cell} of \Cref{UCdefn} and thereby obtaining our desired dualizable $1$-cell $X_\bullet : \Gamma_\bullet \to \Delta_\bullet$ in $\textbf{UC}$.
We first prove the following easy fact.
\begin{prop}\label{Fequivalence}
Given a finite semisimple C*-category $ \mcal M $ and an object $ m $ which contains every simple object as a sub-object, the functor $F \coloneqq \mcal M (m ,\bullet) : \mcal M \to \mcal R_{A}$  is an equivalence where $ A = \normalfont\t{End} (m) $ and $ \mcal R_A $ is the category of right $ A $-correspondences.
\end{prop}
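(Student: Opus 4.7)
The plan is to verify that $F$ is well-defined into $\mathcal R_A$ and then check essential surjectivity and full faithfulness separately, exploiting the semisimple structure of $\mathcal M$.

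First I would observe that for any $n\in\mathcal M$, the space $\mathcal M(m,n)$ is finite dimensional (since $\mathcal M$ is a finite semisimple C*-category) and carries an obvious right $A$-action by precomposition, with an $A$-valued inner product $\langle f,g\rangle_A := g^*\circ f\in A$. A morphism $h\in\mathcal M(n_1,n_2)$ induces $F(h):=h\circ(-):\mathcal M(m,n_1)\to\mathcal M(m,n_2)$, which is right $A$-linear and adjointable (with adjoint $h^*\circ(-)$). This shows $F$ is a well-defined $*$-linear functor $\mathcal M\to\mathcal R_A$.

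Next I would list a maximal collection $\{v_1,\dots,v_r\}$ of mutually non-isomorphic simple objects of $\mathcal M$. By hypothesis we may write $m\simeq\bigoplus_i v_i^{\oplus n_i}$ with $n_i\geq 1$, whence $A\simeq\bigoplus_i M_{n_i}(\mathbb C)$ as a C*-algebra, and $F(v_i)\simeq\mathbb C^{n_i}$ as a right $A$-module, concentrated in the $i$-th block. Thus $F$ sends the simples of $\mathcal M$ to a complete set of representatives of the simples of $\mathcal R_A$ (since every simple right $A$-module is precisely a simple $M_{n_i}(\mathbb C)$-module for some $i$). Using that $F$ is additive (it commutes with finite direct sums) and both categories are semisimple with finitely many simples, essential surjectivity follows immediately.

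For full faithfulness, by additivity of both sides in each variable, it suffices to check that the map
\[
\mathcal M(v_i,v_j)\,\longrightarrow\,\mathrm{Hom}_A(F(v_i),F(v_j))
\]
is a bijection for each pair of simples $v_i,v_j$. The left-hand side is $\delta_{ij}\mathbb C$ by Schur. The right-hand side is $\mathrm{Hom}_A(\mathbb C^{n_i},\mathbb C^{n_j})$ where the two modules are concentrated in different blocks of $A$ when $i\neq j$ (giving $0$) and in the same block $M_{n_i}(\mathbb C)$ when $i=j$ (giving $\mathbb C$ by the Artin–Wedderburn correspondence). Finally $F(\mathrm{id}_{v_i})=\mathrm{id}_{F(v_i)}\neq 0$, so the map is nonzero and hence an isomorphism in the case $i=j$. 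Combining essential surjectivity with full faithfulness, $F$ is an equivalence.

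No step is a real obstacle: the only point requiring a little care is identifying the simple right $A$-modules with the $F(v_i)$, which reduces to the standard Artin–Wedderburn decomposition of the finite-dimensional C*-algebra $A=\mathrm{End}(m)$.
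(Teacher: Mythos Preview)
Your proof is correct but takes a genuinely different route from the paper's. The paper argues \emph{constructively}: for fullness it chooses a resolution of the identity $\sum_{u\in\mathscr S_x} u\,u^* = 1_x$ in $F(x)$ (which exists because every simple of $\mathcal M$ embeds in $m$) and, given $T\in\mathcal R_A(F(x),F(y))$, exhibits the explicit preimage $f=\sum_u T(u)\,u^*\in\mathcal M(x,y)$; for essential surjectivity it starts with a nonzero vector in a simple $H\in\mathcal R_A$, extracts a minimal projection $p\in A$ supporting it, realizes $p=\alpha\alpha^*$ for an isometry $\alpha:x\hookrightarrow m$ with $x$ simple, and then writes down an isomorphism $F(x)\cong H$ by hand. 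Your argument instead passes through the Artin--Wedderburn decomposition of $A$ to match simples on both sides and then reduces full faithfulness to simples via additivity and Schur. Your approach is cleaner and more conceptual; the paper's approach has the advantage of producing the explicit formula $f=\sum_u T(u)\,u^*$ inverting $F$ on morphisms, and this resolution-of-identity technique (finite subsets $\mathscr S$ with $\sum\sigma\sigma^*=1$) is reused repeatedly later in the paper when building the unitary connections and verifying surjectivity of various maps.
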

\begin{proof}
For $ x \in \t{ob} (\mcal M) $, $F(x)$ becomes a right $A$-correspondence with the $ A $-action and $ A $-valued inner product defined in the following way
\[
F(x) \times A \ni (u,a) \longmapsto u \, a \in F(x) \ \t{ and } \ \left\lab u, v \right  \rab \coloneqq v^* u \ .
\]
For $f \in \mcal M(x,y), F(f)(u)= f\,u \in F(y)$ for each $u \in F(x)$. Indeed, $ F $-action on any morphism of $ \mcal M $ is adjointable ($F(f)^* = F(f^*)$) and $ A $-linear.
Clearly, each $F$ is a faithful functor.
	
Let $T \in \mcal R_{A}(F(x),F(y))$.
Since every simple appears as a sub-object in $ m $, we can find $\mathscr S_x \subseteq F(x) $ such that $ \displaystyle \sum_{u \in \mathscr S_x} u \, u^* = 1_x$. 
Define $f \coloneqq \displaystyle \sum_{u \in \mathscr S_x} T(u)u^* \in \mcal M (x,y)$. 
For $v \in F(x)$,we have,
	\begin{equation*}
		\begin{split}
			T(v) &= T\left(\displaystyle \sum_{u \in \mathscr S_x} u\, u^* v \right) \\
			&= \displaystyle \sum_{u \in \mathscr S_x} T(u)\, u^* v \ \ \t{(since $T$ is right $A$-linear)} \\
			&= F(f)(v) 	\ \ \t{(by definition of $f$)}
		\end{split} 
	\end{equation*}
	Thus, $F$ is full.

	Now, we show that each $F$ is essentially surjective. Since $F$ is fully faithful by Schur's lemma, we have , $F(x)$ is simple if $x$ is simple. We show that for simple $H \in \mcal R_{A}$ there is a simple $x$ in $\mcal M$ such that $F(x)_{A} \simeq H_{A} $. Choose, $\xi \in H \setminus \left\{0 \right\} $
	such that $\langle \xi,\xi \rangle_{A} \neq 0 $. By spectral decomposition of $\langle \xi,\xi \rangle_{A}$ , there is a minimal  projection $p$ in $A$ such that $\langle \xi,\xi \rangle_{A} p \in \C p \setminus \left\{0 \right\} $. Now ,since p is minimal, $\langle \xi p,\xi p \rangle_{A}= p \langle \xi,\xi \rangle_{A} p \in \C p \setminus \left\{0 \right\}$. Without loss of generality, we assume $\xi p=\xi$. Now, $H$ being irreducible, we have, $H= \xi A$. Now, by semi-simplicity of $\mcal M$ there is a simple $x \in \mcal M$ and an isometry $\alpha : x \to m$ such that $p=\alpha {\alpha^*}$. Observe that, $\alpha^* \in F(x)$ and $F(x)= {\alpha^*}{A}$. Define $T' : {F(x)}_A \to H_{A}$ as $T'({\alpha^*}a)= \xi a$ for all $a \in A$. Clearly, $T'$ is well-defined, right-$A$ linear and onto. Thus, $T'$ is an isomorphism. Hence, $F$ is an equivalence.
\end{proof}
\subsubsection{Construction of $ \left(F_\bullet , W^F_\bullet\right) \in \t{\normalfont \textbf{UC}}_1 \left(\Gamma_\bullet , \Sigma_\bullet \right) $}\

For each $ k \geq 0 $, setting $ m = \Gamma_k \cdots \Gamma_1 m_0$ in \Cref{Fequivalence}, we obtain the functor $F_k \coloneqq \mcal M_k (\Gamma_k \cdots \Gamma_1 m_0, \bullet) : \mcal M_k \to \mcal R_{A_k}$ which is an equivalence.
\begin{rem}\label{Fkequivalence}
$ F_k $ being an equivalence is a part of an adjoint equivalence \cite{JY21}, so we may obtain an adjoint $ \ol F_k $ of $F_k$, and evaluation and coevaulation implementing the duality which are both natural unitaries.
Thus, for each $k \geq 0$, bi-faithfulness of $F_k$ is immediate .
\end{rem}
Before we describe the unitary connections for $F_k$'s, we digress a bit to prove some results which will be useful in the construction.

\vspace*{4mm}

Suppose $\mcal N$ is a C*-semisimple category. For $x , y \in \t{Ob}(\mcal N)$ , consider the morphism space $\mcal N(x,y)$ and consider the C*-algebra $A=\t{End}(x)$. Then, $\mcal N(x,y)$ becomes a right-$A$ correspondence with $A$-valued inner product, $\langle u,v \rangle_{A}= {u^*}v.$

We proceed with the following lemma.
\begin{lem}\label{Unitary}
	Suppose $\mcal M$ and $\mcal N$ are finite, C*-semisimple categories. Let $\Gamma_1 : \mcal M \to \mcal N$ and $\Gamma_2 : \mcal N \to \mcal N$ be bi-faithful, $*$-linear functors. Then the map $
	T :  \mcal N(\Gamma_1 m_0,x) \us{\normalfont\t{End}(\Gamma_1 m_0)}\boxtimes \mcal N(\Gamma_1 m_0,\Gamma_2 \Gamma_1 m_0) \to \mcal N(\Gamma_1 m_0,\Gamma_2 x) $ given by $u \boxtimes v \os{T}\longmapsto 
	\raisebox{-16mm}{
		\begin{tikzpicture}
		\draw (-.5,.2) to (-.5,1.8);
		\draw (0,.2) to (0,1);
		\draw[dashed] (0.25,.2) to (0.25,.5);
		\draw[ densely dotted] (.08,.89) to (.08,1.8);
		\draw (0,-.2) to (0,-.8);
		\draw[dashed] (0.25,1) to (0.25,-.8);
		\node[draw,thick,rounded corners,fill=white,minimum width=30] at (-.1,0) {$v$};
		\node[draw,thick,rounded corners,fill=white,minimum width=10] at (0.1,1) {$u$};
		\node[scale=.8] at (0,-1) {$\Gamma_{1}$};
		\node[scale=.8] at (-.2,.5) {$\Gamma_{1}$};
		\node[scale=.8] at (-.8,1.2) {$\Gamma_2$};
		\node at (.6,-.5) {$m_0$};
		\node at (.3,1.5) {$x$}; 
		\end{tikzpicture}}$ is a unitary as a right- $\normalfont \t{End}(\Gamma_1 m_0)$-linear map.   
\end{lem}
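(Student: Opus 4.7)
The plan is to check the four pieces needed for $T$ to be a unitary of right $A$-modules, where $A = \t{End}(\Gamma_1 m_0)$: well-definedness on the $A$-balanced tensor product, right $A$-linearity, isometry for the $A$-valued inner product (giving injectivity), and surjectivity.

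For the first three, everything reduces to the functoriality and $*$-linearity of $\Gamma_2$. The left $A$-action on $v \in \mcal N(\Gamma_1 m_0, \Gamma_2\Gamma_1 m_0)$ is $a \cdot v = \Gamma_2(a) \circ v$, while the right $A$-action on $u \in \mcal N(\Gamma_1 m_0, x)$ is post-composition. The computation $T(u \circ a \boxtimes v) = \Gamma_2(u \circ a) \circ v = \Gamma_2(u) \circ \Gamma_2(a) \circ v = T(u \boxtimes a \cdot v)$ descends $T$ to the $A$-balanced tensor product, and right $A$-linearity in the target is immediate. A direct calculation using the paper's convention $\langle \xi, \zeta \rangle = \zeta^* \circ \xi$ gives
\[
\langle T(u_1 \boxtimes v_1), T(u_2 \boxtimes v_2) \rangle_A = v_2^* \, \Gamma_2(u_2^* u_1) \, v_1 = \langle u_1 \boxtimes v_1, u_2 \boxtimes v_2 \rangle_A,
\]
the first equality by $*$-linearity of $\Gamma_2$ and the second by the standard formula for the $A$-tensor inner product. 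Hence $T$ is isometric, and in particular injective.

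Surjectivity is the crux. Since $m_0$ is taken as the sum of all simples of $\mcal M$ and $\Gamma_1$ is bi-faithful, $\Gamma_1 m_0 \in \mcal N$ contains every simple of $\mcal N$ as a sub-object: faithfulness of the adjoint $\Gamma_1'$ forces every simple $y \in \mcal N$ to satisfy $\mcal N(y, \Gamma_1 m_0) \neq 0$. By semisimplicity of $\mcal N$ we can then choose, for each $x$, a finite set $\mscr S_x \subset \mcal N(\Gamma_1 m_0, x)$ of partial isometries with $\sum_{s \in \mscr S_x} s \, s^* = 1_x$. Applying the $*$-linear functor $\Gamma_2$ yields $\sum_s \Gamma_2(s) \, \Gamma_2(s)^* = 1_{\Gamma_2 x}$, so any $w \in \mcal N(\Gamma_1 m_0, \Gamma_2 x)$ decomposes as $w = \sum_s \Gamma_2(s) \, (\Gamma_2(s)^* w) = T\!\left( \sum_s s \boxtimes \Gamma_2(s)^* w \right)$, proving surjectivity. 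The main obstacle, as indicated, is extracting such a Pimsner--Popa basis for an arbitrary $x \in \mcal N$ out of morphisms factoring through $\Gamma_1 m_0$; once the bi-faithfulness of $\Gamma_1$ is converted into this covering statement, the remainder of the proof is a routine functoriality check.
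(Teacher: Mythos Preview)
Your argument is correct. The well-definedness, right $A$-linearity, and isometry steps match the paper's essentially verbatim (modulo the paper's local switch to the convention $\langle u,v\rangle_A = u^* v$ just before this lemma, which is immaterial).

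The surjectivity argument, however, takes a genuinely different route. The paper bends the $\Gamma_2$-strand using the solution to the conjugate equations for $\Gamma_2$ (i.e.\ the adjoint functor $\ol\Gamma_2$), and then inserts a Pimsner--Popa basis $\mscr S \subset \mcal N(\Gamma_1 m_0, \ol\Gamma_2 \Gamma_1 m_0)$, invoking bi-faithfulness of both $\Gamma_1$ and $\Gamma_2$ to guarantee such a basis exists. You instead take a PP-basis $\mscr S_x \subset \mcal N(\Gamma_1 m_0, x)$ for the arbitrary object $x$ directly, using only that $\Gamma_1 m_0$ contains every simple of $\mcal N$ (which needs only bi-faithfulness of $\Gamma_1$), and then push the resolution of the identity through $\Gamma_2$ by $*$-functoriality. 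Your approach is more elementary---it avoids dualizing $\Gamma_2$ altogether and shows, as a bonus, that the bi-faithfulness hypothesis on $\Gamma_2$ is not actually needed for this lemma; mere $*$-linearity suffices. The paper's approach is more in the graphical-calculus spirit used throughout and generalizes naturally when one wants to manipulate the $\Gamma_2$-strand in other ways, but for the bare statement your argument is the cleaner one.
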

\begin{proof}
	Let $A= \t{End}(\Gamma_1 m_0)$. Clearly, $T$ is middle A-linear. Now,
	\[\langle T(u_1 \boxtimes v_1) , T(u_2 \boxtimes v_2) \rangle_{A} =
	\raisebox{-12mm}{
		\begin{tikzpicture}
		\draw (-.4,.25) to (-.4,1.3);
		\draw (0,.2) to (0,.45);
		\draw[dashed] (0.25,.2) to (0.25,.45);
		\draw (0,1) to (0,1.3);
		\draw (0,-.2) to (0,-.5);
		\draw (0,1.9) to (0,2.2);
		\draw[dashed] (0.25,-.2) to (0.25,-.5);
		\draw[dashed] (0.25,1) to (0.25,1.4);
		\draw[dashed] (0.25,1.9) to (0.25,2.2); 
		\node[draw,thick,rounded corners,minimum width=40,fill=white] at (0,1.6) {$v_1^{*}$};
		\node[draw,thick,rounded corners,minimum width=40,fill=white] at (0,0) {$v_2$};
		\node[draw,thick,rounded corners,fill=white] at (0.2,0.74) {$u_1 ^{*} u_2$};
		\end{tikzpicture}}
	= \langle v_1 , \langle u_1,u_2 \rangle_{A} v_2 \rangle_{A}= \langle u_1 \boxtimes v_1, u_2 \boxtimes v_2 \rangle_{A} . 
	\]
	Hence, $T$ is an isometry. If we can show that $T$ is surjective then we get our desired result from [Lance]. Now, let $y \in \mcal N(\Gamma_1 m_0, \Gamma_2 x)$. Then, 
	\raisebox{-6mm}{
		\begin{tikzpicture}
		\draw (-.1,.24) to (-.1,.7);
		\draw[densely dotted] (0.1,.24) to(.1,.7);
		\draw (-.1,-.24) to (-.1,-.7);
		\draw[dashed] (.1,-.24) to (.1,-.7);
		\node[right] at (.06,.5) {$x$};
		\node[left,scale=0.8] at (-.12,.6) {$\Gamma_2$};
		\node[left,scale=0.8] at (-.12,-.6) {$\Gamma_1$};
		\node at (.5,-.6) {$m_0$};
		\node[draw,thick,rounded corners,fill=white,minimum width=20] at (0,0) {$y$};
		\end{tikzpicture}}
	= 
	\raisebox{-4mm}{
		\begin{tikzpicture}
		\draw (-.1,.23) to (-.1,.2);
		\draw[densely dotted] (0.1,.23) to(.1,.7);
		\draw (-.1,-.24) to (-.1,-.7);
		\draw[dashed] (.1,-.24) to (.1,-.7);
		\node[right] at (.06,.4) {$x$};
		\draw[in=90,out=90,looseness=2] (-.1,.2) to (-.5,.2);
		\draw (-.5,.2) to (-.5,-0.25);
		\draw[in=-90,out=-90,looseness=2] (-.5,-.25) to (-.9,-.25);
		\draw (-.9,-.25) to (-.9,.25);
		\node[draw,thick,rounded corners,fill=white,minimum width=20] at (0,0) {$y$};
		\end{tikzpicture}}= $\displaystyle \sum_{\alpha \in \mathscr S}$
	\raisebox{-12mm}{
		\begin{tikzpicture}
		\draw (-.1,.23) to (-.1,.2);
		\draw[densely dotted] (0.1,.23) to(.1,.7);
		\draw (-.1,-.24) to (-.1,-.5);
		\draw[dashed] (.1,-.24) to (.1,-.5);
		\node[right] at (.06,.4) {$x$};
		\draw[in=90,out=90,looseness=2] (-.1,.2) to (-.5,.2);
		\draw (-.5,.2) to (-.5,-0.5);
		\draw (-.1,-.9) to (-.1,-1.2);
		\draw[dashed] (.1,-.9) to (.1,-1.2);
		\draw[dashed] (0.1,-1.6) to(.1,-2);
		\draw (-.1,-1.6) to (-.1,-1.9);
		\draw (-.4,-1.6) to (-.4,-1.7);
		\draw[in=-90,out=-90,looseness=2] (-.4,-1.7) to (-.8,-1.7);
		\draw (-.8,-1.7) to (-.8,.7);
		\node[draw,thick,rounded corners,fill=white,minimum width=20] at (0,0) {$y$};
		\node[draw,thick,rounded corners,fill=white,minimum width=30] at (-.1,-.7) {$\alpha$};
		\node[draw,thick,rounded corners,minimum width=30,scale=0.8,fill=white] at (-.1,-1.4) {$\alpha^{*}$};
		\end{tikzpicture}}.
	Last equality follows from the fact that, we can find such a set $\mathscr S \subseteq \t{End}\left( \ol \Gamma_2 \Gamma_1 m_0 \right)$ because of bi-faithfulness of $\Gamma_2$, $\Gamma_{1}$ and $m_0$ contains all irreducibles of $\mcal M$. Now, $T \left( \displaystyle \sum_{\alpha \in \mathscr S}
	\raisebox{-6mm}{
		\begin{tikzpicture}
		\draw (-.1,.23) to (-.1,.2);
		\draw[densely dotted] (0.1,.23) to(.1,.7);
		\draw (-.1,-.24) to (-.1,-.5);
		\draw[dashed] (.1,-.24) to (.1,-.5);
		\node[right] at (.06,.5) {$x$};
		\draw[in=90,out=90,looseness=2] (-.1,.2) to (-.5,.2);
		\draw (-.5,.2) to (-.5,-0.5);
		\draw (-.1,-.9) to (-.1,-1.2);
		\draw[dashed] (.1,-.9) to (.1,-1.2);
		\node[draw,thick,rounded corners,minimum width=20,fill=white] at (0,0) {$y$};
		\node[draw,thick,rounded corners,minimum width=30,fill=white] at (-.1,-.7) {$\alpha$};
		\end{tikzpicture}} \boxtimes
	\raisebox{-6mm}{
		\begin{tikzpicture}
		\draw[dashed] (0.1,-1.6) to(.1,-2);
		\draw (-.1,-1.6) to (-.1,-1.9);
		\draw (-.4,-1.6) to (-.4,-1.7);
		\draw[in=-90,out=-90,looseness=2] (-.4,-1.7) to (-.8,-1.7);
		\draw[dashed] (0.1,-1.2) to(.1,-.7);
		\draw (-.1,-1.2) to (-.1,-.7);
		\draw (-.8,-1.7) to (-.8,-.7);
		\node[draw,thick,rounded corners,minimum width=30,scale=0.8,fill=white] at (-.1,-1.4) {$\alpha^{*}$};
		\end{tikzpicture}}\right)
	=y$. Hence, $T$ is surjective. So, $T$ is an unitary.	    
\end{proof}


\begin{cor}\label{uninat}
	The maps  $T_x^k :F_k(x)\us{A_k} \boxtimes {A_{k+1}}_{A_{k+1}} \to \mcal M_k (\Gamma_{k+1} \Gamma_{k} \cdots \Gamma_1 m_0,\Gamma_{k+1} x)_{A_{k+1}}$ given by $u \boxtimes \alpha \os{T^k_x}\longmapsto 
	\raisebox{-12mm}{
		\begin{tikzpicture}
		\draw[dashed] (.45,-.7) to (.45,.7);
		\draw (.35,-.7) to (.35,.7);
		\draw (-.4,-.7) to (-.4,1.6);
		\draw (-.18,-.7) to (-.18,.7);
		\node at (0.1,.45) {$\cdots$};
		\node at (0.1,-.5) {$\cdots$};
		\draw[densely dotted] (.15,1.1) to (.15,1.6);
		\node[right] at (.12,1.4) {$x$};
		\node[scale=.8] at (-.18,-.9) {$\Gamma_{k}$};
		\node[scale=.8] at (.35,-.9) {$\Gamma_{1}$};
		\node at (.8,-.5) {$m_0$};
		\node[left,scale=0.8] at (-.4,.8) {$\Gamma_{k+1}$};
		\node[draw,thick,rounded corners,fill=white,minimum width=24] at (0.15,.9) {$u$};
		\node[draw,thick,rounded corners,fill=white,minimum width=35] at (0,0) {$\alpha$};
		\end{tikzpicture}}$ are unitaries and they are natural in x, for each $x \in \mcal M_k$ and $k \geq 0$.
\end{cor}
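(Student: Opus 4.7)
The plan is to apply Lemma~\ref{Unitary} directly, with the mild observation that its proof adapts verbatim when $\Gamma_2$ is allowed to target a different finite semisimple C*-category (nothing in the lemma or its proof uses that $\Gamma_2$ is an endofunctor beyond $*$-linearity, bi-faithfulness, and the existence of an adjoint $\ol \Gamma_2$). Under this observation, I would take the role of $\Gamma_1 m_0$ to be $\Gamma_k\cdots\Gamma_1 m_0 \in \mcal M_k$ and the role of $\Gamma_2$ to be $\Gamma_{k+1}: \mcal M_k \to \mcal M_{k+1}$, so that $F_k(x)$ is the first tensor factor, $A_{k+1} = \t{End}(\Gamma_{k+1}\Gamma_k\cdots\Gamma_1 m_0)$ is the second, and the graphical formula reduces to $T^k_x(u \btimes \alpha) = \Gamma_{k+1}(u) \circ \alpha$.

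First I would verify that $T^k_x$ is well-defined (middle $A_k$-balanced) via the inclusion $A_k \hookrightarrow A_{k+1}$ given by $a \mapsto \Gamma_{k+1}(a)$: for $a \in A_k$, $T^k_x(ua \btimes \alpha) = \Gamma_{k+1}(u)\Gamma_{k+1}(a)\alpha = T^k_x(u \btimes \Gamma_{k+1}(a)\alpha)$, and right $A_{k+1}$-linearity is immediate. Next, isometry is a one-line computation using $*$-linearity of $\Gamma_{k+1}$:
\[
\lab T^k_x(u_1 \btimes \alpha_1),\, T^k_x(u_2 \btimes \alpha_2)\rab_{A_{k+1}} = \alpha_1^{*}\, \Gamma_{k+1}(u_1^{*} u_2)\, \alpha_2 = \lab \alpha_1,\, \Gamma_{k+1}(\lab u_1, u_2\rab_{A_k})\, \alpha_2\rab_{A_{k+1}},
\]
which is exactly the inner product on the balanced tensor product.

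The main work is surjectivity. Given $y \in \mcal M_{k+1}(\Gamma_{k+1}\Gamma_k\cdots\Gamma_1 m_0, \Gamma_{k+1} x)$, I would use bi-faithfulness of $\Gamma_{k+1}$ to pick an adjoint $\ol \Gamma_{k+1}$ with solutions to the conjugate equations and graphically \emph{bend} $y$ into a morphism in $\mcal M_k$ starting at $\ol \Gamma_{k+1}\Gamma_{k+1}\Gamma_k\cdots\Gamma_1 m_0$, then resolve the identity on this latter object via a finite family $\mscr S$ of morphisms $\alpha: \Gamma_k\cdots\Gamma_1 m_0 \to \ol \Gamma_{k+1}\Gamma_{k+1}\Gamma_k\cdots\Gamma_1 m_0$ satisfying $\sum_{\alpha \in \mscr S} \alpha\,\alpha^{*} = 1$. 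Existence of such a resolution reduces to showing that every simple of $\mcal M_k$ appears as a sub-object of $\Gamma_k\cdots\Gamma_1 m_0$, which I would prove by induction on $k$: the base case is the choice of $m_0$, and the inductive step uses bi-faithfulness of $\Gamma_j$ (whose faithful adjoint ensures every simple of $\mcal M_j$ is a sub-object of $\Gamma_j(z)$ for some simple $z \in \mcal M_{j-1}$). Unwinding the resulting expression exhibits $y$ as $T^k_x(u \btimes \alpha)$ for a canonically determined pair $(u,\alpha)$, completing unitarity.

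Naturality in $x$ is immediate: for $f \in \mcal M_k(x,y)$ one computes $T^k_y((f \circ u) \btimes \alpha) = \Gamma_{k+1}(f)\Gamma_{k+1}(u)\alpha = \Gamma_{k+1}(f)\, T^k_x(u \btimes \alpha)$, so $T^k_\bullet$ intertwines the post-composition actions on either side. I expect the only real friction to lie in the graphical bookkeeping of the surjectivity step, but this is the same template as in the proof of Lemma~\ref{Unitary} and should go through without difficulty.
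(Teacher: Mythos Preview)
Your proof is correct and follows the same approach as the paper: invoke Lemma~\ref{Unitary} with $\Gamma_1 m_0$ replaced by $\Gamma_k\cdots\Gamma_1 m_0$ and $\Gamma_2$ replaced by $\Gamma_{k+1}$, then read off naturality from the definition of $F_k$. The paper's proof is a terse two-liner that simply cites Lemma~\ref{Unitary} and \Cref{Fequivalence}, whereas you spell out the well-definedness, isometry, surjectivity, and naturality checks explicitly; your remark that Lemma~\ref{Unitary} is stated for $\Gamma_2$ an endofunctor but applies verbatim when $\Gamma_2:\mcal N\to\mcal N'$ is a useful clarification the paper leaves implicit.
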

\begin{proof}
	Clearly, $T_x^k$ are right-$A_{k+1}$ linear.
	Unitarity of $T_x^k$ follows from \Cref{Unitary}. Naturality of $T^k$ follows from the definition of $F_k$ acting on morphism spaces as in \Cref{Fequivalence}.
\end{proof}

We now define the unitary connections for $\left\{F_k\right\}_{k\geq 0}$ as $W_{k+1}^F \coloneqq T^k : \Sigma_{k+1} F_k \to F_{k+1} \Gamma_{k+1}$ as defined in \Cref{uninat}, for each $k \geq 0 \ $.
Pictorially we denote, for each $k \geq 0$, $F_k$ by \raisebox{-4mm}{\begin{tikzpicture}
	\draw[violet,->] (0,0) to (0,1);
	\end{tikzpicture}} and $\ol F_k$ by \raisebox{-4mm}{\begin{tikzpicture}
	\draw[violet,<-] (0,0) to (0,1);
	\end{tikzpicture}} and for each $k \geq 1$, $W_k^F$ by
\raisebox{-4mm}{
	\begin{tikzpicture}
	\draw[white,line width=1mm,out=-90,in=90] (-.25,.5) to (.25,-.5);
	\draw[out=-90,in=90,violet,<-] (-.25,.5) to (.25,-.5);
	\begin{scope}[on background layer]
	\draw[out=90,in=-90] (-.25,-.5) to (.25,.5);
	\end{scope}
	\node[right] at (0.15,-0.4) {$F_{k-1}$};
	\node[right] at (0.15,0.3) {$\Gamma_{k}$};
	\node[left] at (-0.15,-0.4) {$\Sigma_{k}$};
	\node[left] at (-0.15,0.3) {$F_{k}$};
	\end{tikzpicture}}
and $\left(W_k^F\right)^*$ by
\raisebox{-4mm}{
	\begin{tikzpicture}
	\draw[white,line width=1mm,out=-90,in=90] (.25,.5) to (-.25,-.5);
	\draw[out=-90,in=90,violet,<-] (.25,.5) to (-.25,-.5);
	\begin{scope}[on background layer]
	\draw[out=90,in=-90] (.25,-.5) to (-.25,.5);
	\end{scope}
	\node[right] at (0.15,-0.4) {$\Gamma_{k}$};
	\node[right] at (0.15,0.3) {$F_{k-1}$};
	\node[left] at (-0.15,-0.4) {$F_{k}$};
	\node[left] at (-0.15,0.3) {$\Sigma_{k}$};
	\end{tikzpicture}} .
Hence, we have a $1$-cell $\left(F_\bullet, W_\bullet^F \right) \in \textbf{UC}_1 (\Gamma_\bullet, \Sigma_\bullet)$.

For each $k \geq 1$, define
\[ \ol W_k^F \coloneqq \raisebox{-6mm}{
	\begin{tikzpicture}
	\draw[white,line width=1mm,out=-90,in=90] (-.25,.5) to (.25,-.5);
	\draw[out=-90,in=90,violet,->] (-.25,.5) to (.25,-.5);
	\begin{scope}[on background layer]
	\draw[out=90,in=-90] (-.25,-.5) to (.25,.5);
	\end{scope}
	\node[right] at (0.2,-0.4) {$\ol F_{k-1}$};
	\node[right] at (0.15,0.3) {$\Sigma_{k}$};
	\node[left] at (-0.15,-0.4) {$\Gamma_{k}$};
	\node[left] at (-0.15,0.3) {$\ol F_{k}$};
	\end{tikzpicture}}
\coloneqq
\raisebox{-10mm}{
	\begin{tikzpicture}
	\draw[white,line width=1mm,out=-90,in=90] (.25,.5) to (-.25,-.5);
	\draw[out=-90,in=90,violet,<-] (.25,.5) to (-.25,-.5);
	\begin{scope}[on background layer]
	\draw[out=90,in=-90] (.25,-.5) to (-.25,.5);
	\draw (-.25,.5) to (-.25,.8);
	\draw (.25,-.5) to (.25,-.8);
	\end{scope}
	\node[right,scale=0.7] at (0,-1) {$\Gamma_{k}$};
	\node[left,scale=0.7] at (0,1) {$\Sigma_{k}$};
	\draw[violet,in=90,out=90,looseness=2,->] (.25,.5) to (.75,.5);
	\draw[violet,->] (.75,.5) to (.75,-.8);
	\draw[violet,in=-90,out=-90,looseness=2,<-] (-.25,-.5) to (-.75,-.5);
	\draw[violet,<-] (-.75,-.5) to (-.75,.8);
	\end{tikzpicture}} \	\t{and} \ \left(\ol W_k^F \right)^*  \coloneqq 
\raisebox{-8mm}{
	\begin{tikzpicture}
	\draw[out=-90,in=90](-.25,.5) to (.25,-.5);
	\draw[white,line width=1mm,out=-90,in=90] (-.25,-.5) to (.25,.5);
	\draw[violet,out=90,in=-90,<-] (-.25,-.5) to (.25,.5);
	\node[right] at (0.2,-0.4) {$\Sigma_{k}$};
	\node[right] at (0.15,0.3) {$\ol F_{k-1}$};
	\node[left] at (-0.15,-0.4) {$\ol F_{k}$};
	\node[left] at (-0.15,0.3) {$\Gamma_{k}$};
	\end{tikzpicture}} 
\coloneqq
\raisebox{-8mm}{
	\begin{tikzpicture}
	\draw[white,line width=1mm,out=-90,in=90] (-.25,.5) to (.25,-.5);
	\draw[out=-90,in=90,violet,<-] (-.25,.5) to (.25,-.5);
	\draw[violet,in=90,out=90,looseness=2,->] (-.25,.5) to (-.75,.5);
	\draw[violet] (-.75,.5) to (-.75,-.8);
	\draw[violet,in=-90,out=-90,looseness=2,<-] (.25,-.5) to (.75,-.5);
	\draw[violet,<-] (.75,-.5) to (.75,.8);
	\begin{scope}[on background layer]
	\draw[out=90,in=-90] (-.25,-.5) to (.25,.5);
	\end{scope}
	\draw (.25,.5) to (.25,.8);
	\draw (-.25,-.5) to (-.25,-.8);
	\node[right,scale=0.8] at (0.16,0.9) {$\Gamma_{k}$};
	\node[left,scale=0.8] at (-0.15,-0.9) {$\Sigma_{k}$};
	\end{tikzpicture}}\] 
Since the evaluation and coevaluation are chosen (in \Cref{Fkequivalence}) to be  unitaries, therefore $ \ol W^F_k $'s are also so.
We claim that $F_\bullet$ is a dualizable $1$-cell in $\textbf{UC}$ with dual $\left(\ol F_\bullet, \ol  W_\bullet^F \right)$.
For this,  we verify that solutions to conjugate equations (as in \Cref{Fkequivalence}) satisfy exchange relations for $k \geq 0$, which is equivalent to the equations by which $W_k^F$'s and $ \ol W^F_k $'s become unitaries.
\comments{\begin{prop}\label{xrelFk}
\red{Comment out!!!}	$\left(F_\bullet, W_\bullet^F \right)$ is a dualizable $1$-cell in $\normalfont \textbf{UC} \ $.
\end{prop}
\begin{proof}
	We have,
	\raisebox{-10mm}{
		\begin{tikzpicture}[rotate=60,scale=1.2]
		\draw[violet,in=-90,out=-90,looseness=2,->] (0,0) to (0.5,0);
		\draw[violet,<-] (0,0) to (0,1);
		\draw[violet] (0.5,0) to (0.5,1);
		\draw (-.6,.6) to (-.05,.6);
		\draw (0.05,.6) to (0.45,.6);
		\draw (0.55,.6) to (1,.6);
		\node[left,scale=0.7] at (-.6,.5) {$\Gamma_{k+1}$};
		\node[left,scale=0.7] at (-.2,0) {$\ol F_k$};
		\node[left,scale=0.6] at (.4,.6) {$\Sigma_{k+1}$};
		\node[left,scale=0.7] at (0,.7) {$\ol F_{k+1}$};
		\node[right,scale=0.7] at (.8,.56) {$\Gamma_{k+1}$};  
		\node[right,scale=0.7] at (.5,0) {$F_k$};
		\node[right,scale=0.7] at (.5,1.2) {$F_{k+1}$};
		\end{tikzpicture}}=
	\raisebox{-16mm}{
		\begin{tikzpicture}
		\draw[white,line width=1mm,out=-90,in=90] (.25,.5) to (-.25,-.5);
		\draw[out=-90,in=90,violet,<-] (.25,.5) to (-.25,-.5);
		\begin{scope}[on background layer]
		\draw[out=90,in=-90] (.25,-.5) to (-.25,.5);
		\end{scope}
		\node[right,scale=0.7] at (0.15,-0.4) {$\Gamma_{k+1}$};
		\node[left,scale=0.7] at (-0.15,0.3) {$\Sigma_{k+1}$};
		\draw[violet,in=90,out=90,looseness=2,->] (.25,.5) to (.75,.5);
		\draw[violet,->] (.75,.5) to (.75,-.2);
		\draw[violet,in=-90,out=-90,looseness=2,<-] (-.25,-.5) to (-.75,-.5);
		\draw[violet,<-] (-.75,-.5) to (-.75,0);
		\draw[violet,in=-90,out=-90,looseness=2] (.75,-.2) to (1.25,-.2);
		\draw[violet] (1.25,-.2) to (1.25,1);
		\draw[in=-90,out=90] (-.25,.5) to (.25,1.5);
		\draw (.25,1.5) to (.25,2);
		\draw[white,line width=1mm,in=-90,out=90] (1.25,1) to (-.5,2);
		\draw[violet,in=-90,out=90,->] (1.25,1) to (-.5,2);
		\end{tikzpicture}}
	=\raisebox{-10mm}{
		\begin{tikzpicture}
		\draw[white,line width=1mm,out=-90,in=90] (.25,.5) to (-.25,-.5);
		\draw[out=-90,in=90,violet,<-] (.25,.5) to (-.25,-.5);
		\begin{scope}[on background layer]
		\draw[out=90,in=-90] (.25,-.5) to (-.25,.5);
		\end{scope}
		\node[right,scale=0.7] at (0.15,-0.4) {$\Gamma_{k+1}$};
		\node[left,scale=0.7] at (-0.15,0.3) {$\Sigma_{k+1}$};
		\draw[violet,in=-90,out=-90,looseness=2,<-] (-.25,-.5) to (-.75,-.5);
		\draw[violet,<-] (-.75,-.5) to (-.75,0);
		\draw[in=-90,out=90] (-.25,.5) to (.25,1.5);
		\draw[white,line width=1mm,in=-90,out=90,looseness=2] (.25,.5) to (-.25,1.5);
		\draw[violet,in=-90,out=90,looseness=2] (.25,.5) to (-.25,1.5);
		\end{tikzpicture}}
	=
	\raisebox{-8mm}{
		\begin{tikzpicture}
		\draw[violet,in=-90,out=-90,looseness=2,->] (0,0) to (.5,0);
		\draw[violet,<-] (0,0) to (0,1);
		\draw[violet,->] (0.5,0) to (0.5,1);
		\draw (1,-.4) to (1,1);
		\node[right,scale=0.7] at (1,.5) {$\Gamma_{k+1}$};
		\end{tikzpicture}}.
	The last equality follows from unitarity of $W_k^F$. Now, 
	\raisebox{-10mm}{
		\begin{tikzpicture}[rotate=60,scale=1.2]
		\draw[violet,in=-90,out=-90,looseness=2,<-] (0,0) to (0.5,0);
		\draw[violet] (0,0) to (0,1);
		\draw[violet,<-] (0.5,0) to (0.5,1);
		\draw (-.6,.6) to (-.05,.6);
		\draw (0.05,.6) to (0.45,.6);
		\draw (0.55,.6) to (1,.6);
		\node[left,scale=0.7] at (-.6,.5) {$\Sigma_{k+1}$};
		\node[left,scale=0.7] at (-.2,0) {$F_k$};
		\node[left,scale=0.6] at (.4,.6) {$\Gamma_{k+1}$};
		\node[left,scale=0.7] at (0,.7) {$F_{k+1}$};
		\node[right,scale=0.7] at (.8,.56) {$\Sigma_{k+1}$};  
		\node[right,scale=0.7] at (.5,0) {$\ol F_k$};
		\node[right,scale=0.7] at (.5,1.2) {$\ol F_{k+1}$};
		\end{tikzpicture}}=
	\raisebox{-12mm}{
		\begin{tikzpicture}
		\draw[white,line width=1mm,out=-90,in=90] (.25,.5) to (-.25,-.5);
		\draw[out=-90,in=90,violet,<-] (.25,.5) to (-.25,-.5);
		\begin{scope}[on background layer]
		\draw[out=90,in=-90] (.25,-.5) to (-.25,.5);
		\end{scope}
		\node[right,scale=0.7] at (0,-1) {$\Gamma_{k+1}$};
		\node[left,scale=0.7] at (-0.15,0.3) {$\Sigma_{k+1}$};
		\draw[violet,in=90,out=90,looseness=2,->] (.25,.5) to (.75,.5);
		\draw[violet,->] (.75,.5) to (.75,-.2);
		\draw[violet,in=-90,out=-90,looseness=2,<-] (-.25,-.5) to (-.75,-.5);
		\draw[violet,<-] (-.75,-.5) to (-.75,0);
		\draw[in=90,out=-90] (.25,-.5) to (-.75,-1.5);
		\draw (-.75,-1.5) to (-.75,-2.4);
		\draw[white,line width=1mm] (-.25,-2) to (-1.25,-1);
		\draw[violet,in=-90,out=90,->] (-.25,-2) to (-1.25,-1);
		\draw[violet,in=-90,out=-90,looseness=2,<-] (-.25,-2) to (.75,-2);
		\draw[violet] (.75,-2) to (.75,-.2);
		\draw[violet] (-1.25,-1) to (-1.25,0);
		\end{tikzpicture}}
	=
	\raisebox{-12mm}{
		\begin{tikzpicture}
		\draw[white,line width=1mm,out=-90,in=90] (.25,.5) to (-.25,-.5);
		\draw[out=-90,in=90,violet,<-] (.25,.5) to (-.25,-.5);
		\begin{scope}[on background layer]
		\draw[out=90,in=-90] (.25,-.5) to (-.25,.5);
		\end{scope}
		\node[right,scale=0.7] at (0,-1) {$\Gamma_{k+1}$};
		\node[left,scale=0.6] at (-0.15,0.3) {$\Sigma_{k+1}$};
		\draw[violet,in=90,out=90,looseness=2,->] (.25,.5) to (.75,.5);
		\draw[violet,->] (.75,.5) to (.75,-.2);
		\draw[violet,in=-90,out=-90,looseness=2,<-] (-.25,-.5) to (-.75,-.5);
		\draw[violet,<-] (-.75,-.5) to (-.75,0);
		\draw[in=90,out=-90] (.25,-.5) to (-.75,-1.5);
		\draw (-.75,-1.5) to (-.75,-2.4);
		\draw[white,line width=1mm] (-.25,-2) to (-1.25,-1);
		\draw[violet,in=-90,out=90,->] (-.25,-2) to (-1.25,-1);
		\draw[violet,in=-90,out=-90,looseness=2,<-] (-.25,-2) to (.75,-2);
		\draw[violet] (.75,-2) to (.75,-.2);
		\draw[violet] (-1.25,-1) to (-1.25,0);
		\draw[violet,in=90,out=90,looseness=2] (-1.25,0) to (-.75,0);
		\draw[violet,in=-90,out=-90,looseness=2,<-] (-1.25,.6) to (-.75,.6);
		\end{tikzpicture}}
	=
	\raisebox{-6mm}{
		\begin{tikzpicture}
		\draw[violet,in=-90,out=-90,looseness=2,<-] (0,0) to (.5,0);
		\draw[violet] (0,0) to (0,1);
		\draw[violet] (.5,0) to (.5,1);
		\draw (1,-.4) to (1,1);
		\draw[violet,in=-90,out=-90,looseness=2,<-] (1.2,.5) to (1.7,.5);
		\draw[violet,in=90,out=90,looseness=2,->] (1.2,.5) to (1.7,.5);
		\end{tikzpicture}}
	=
	\raisebox{-6mm}{
		\begin{tikzpicture}
		\draw[violet,in=-90,out=-90,looseness=2,<-] (0,0) to (.5,0);
		\draw[violet] (0,0) to (0,1);
		\draw[violet,<-] (.5,0) to (.5,1);
		\draw (1,-.4) to (1,1);
		\node[scale=.8] at (1.2,1.2) {$\Sigma_{k+1}$};
		\node[scale=.8] at (.5,1.2) {$\ol F_k$};
		\node[scale=.8] at (-.2,1.2) {$F_k$};
		\end{tikzpicture}} \ .
	The second equality follows from \Cref{Fkequivalence} and third equality follows from unitarity of $W_k^F$. The last equality again uses \Cref{Fkequivalence} . Thus, we have the required result.
\end{proof}}

\begin{rem}\label{CkMkequivalence}
Observe that by \Cref{Fequivalence}, we have an adjoint equivalence $ G_k : \mcal M_k \to \mcal R_{C_k} $ using the fact that $ Q_k \Gamma_k \cdots \Gamma_1 m_0 $ contains every simple of $ \mcal M_k $ as a subobject for each $k \geq 0$.
Further, the square \begin{tikzcd}
	\mcal M_{k+1} \arrow[r,"G_{k+1}"] & \mcal R_{C_{k+1}}  \\
	\mcal M_k \arrow[u,"\Gamma_k"] \arrow[swap,r,"G_k"] & \mcal R_{C_k} \arrow[swap,u,"\bullet \us {C_k} \btimes C_{k+1}"] 
\end{tikzcd}
commutes up to a natural unitary, say $ W^G_{k+1} $, which can be proven exactly the same was done for $ F_k $'s, and thereby yeilding a dualizable $ 1 $-cell $ \left(G_\bullet , W^G_\bullet \right) $ in \textbf{UC} from $ \Gamma_\bullet $ to $ \Psi_\bullet $.

Picking a dual  $ \ol G_\bullet \in \t{\textbf{UC}}_1 \left( \Psi_\bullet , \Gamma_\bullet \right) $ of $ G_\bullet $, we set $ \left(R_\bullet, W^R_\bullet \right)\coloneqq F_\bullet \btimes \ol G_\bullet \in \t{\textbf{UC}}_1 \left( \Psi_\bullet , \Sigma_\bullet \right)$.
That is, $ R_k = F_k \ol G_k : \mcal R_{C_k} \to \mcal R_{A_k} $ for $ k\geq 0 $ which along with the unitary connections are compatible with the $ \Sigma_k $'s and $ \Psi_k $'s.
\comments{Observe that by \Cref{Fequivalence}, the category $\mcal R_{C_k}$, just like $\mcal R_{A_k}$, is equivalent to $\mcal M_k$ for each $k \geq 0$ since $ Q_k \Gamma_k \cdots \Gamma_1 m_0 $ contains every simple of $ \mcal M_k $ as a subobject.
So, we have an equivalence $R_k : \mcal R_{C_k} \to \mcal R_{A_k} \ $ for each $k \geq 0 \ $.
Further, note that these $ R_k $'s (which factors through $\mcal M_k $'s via \Cref{Fequivalence}) satisfy the condition that the square of functors
\begin{tikzcd}[row sep=small]
&\mcal R_{C_{k+1}}
\arrow[drr,bend left,"R_{k+1}"']
\arrow[dr, dashed] & & \\
&& \mcal M_{k+1} \arrow[r, "F_{k+1}"] 
& \mcal R_{A_{k+1}}  \\
&& \mcal M_k \arrow[u, "\Gamma_{k+1}"] \arrow[r, "F_k"]
& \mcal R_{A_{k}} \arrow[u, "\Sigma_{k+1}"']\\
&\mcal R_{C_{k}} \arrow[uuu,"\bullet \us{C_k}\boxtimes C_{k+1}"] \arrow [ur,dashed] \arrow[urr,bend right,"R_k"]  & &
\end{tikzcd} 
\begin{tikzcd}
	\mcal R_{C_{k+1}} \arrow[r,"R_{k+1}"] & \mcal R_{A_{k+1}}  \\
	\mcal R_{C_k} \arrow[u,"\bullet \us {C_k} \btimes C_{k+1}"] \arrow[swap,r,"R_k"] & \mcal R_{A_k} \arrow[swap,u,"\Sigma_{k+1}"] 
\end{tikzcd}
 is commutative up to a natural unitary (where the dashed lines are the equivalences between $\mcal R_{C_k}$ and $\mcal M_k$).
}
\end{rem}
\subsubsection{Construction of $\left(\Lambda_\bullet, W_\bullet^\Lambda \right) \in \t{\normalfont \textbf{UC}}_1 \left(\Sigma_\bullet , \Delta_\bullet\right)$ and its dual}\

Observe that in \Cref{2new0cells},  for each $ k\geq 0 $, we have unital inclusions $ A_k \hookrightarrow  B_k$ of C*-algebras; in particular, for $ k \geq l $, this is given in \Cref{HkCE}.
As a result, the functor $\Lambda_k \coloneqq \bullet \us {A_k} \btimes {B_k} : \mcal R_{A_k} \ra \mcal R_{B_k}$ turns out to bi-faithful for each $k \geq 0$.
Next, we need to define the unitary connection for $ \Lambda_\bullet $.
We acheive this using the following easy fact.
\comments{Before we desribe the unitary connections for $k \geq l$}
\begin{fact}\label{lambdakuni}
Suppose $A,B,C,D$ are finite dimensional C*-algebras such that we have a square of unital inclusions \begin{tikzcd}
	C \arrow[r,hook] & D  \\
	A \arrow[u,hook] \arrow[r,hook] & B \arrow[u,hook] 
\end{tikzcd}. This induces a square of categories and functors \begin{tikzcd}
	\mcal R_C \arrow[r," \bullet \us{C}\boxtimes D"] & \mcal R_D  \\
	\mcal R_A \arrow[u,"\bullet \us{A}\boxtimes C"] \arrow[swap,r,"\bullet \us{A}\boxtimes B"] & \mcal R_B \arrow[swap,u,"\bullet \us{B}\boxtimes D"] 
\end{tikzcd}.
Corresponding to this last square, there exists a unitary natural transformation between the functors $\bullet \us{A}\boxtimes B \us{B}\boxtimes D$ and $\bullet \us{A}\boxtimes C \us{C}\boxtimes D$.
\end{fact}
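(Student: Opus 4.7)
The plan is to exploit associativity of the interior tensor product of C*-correspondences, together with the fact that for any unital inclusion $A \hookrightarrow B$, the right $B$-module $B \us{B}\boxtimes D$ is canonically unitarily isomorphic to $D$ regarded as an $A$-$D$ correspondence via $A \hookrightarrow B \hookrightarrow D$.

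First I would fix a right $A$-correspondence $H$ and exhibit, for each of the two composite functors, a natural unitary isomorphism to the "long" correspondence $H \us{A}\boxtimes D$ (where $D$ is viewed as an $A$-$D$-correspondence by restricting the left $D$-action along the appropriate embedding $A \hookrightarrow D$). Concretely, I would define
\[
\alpha_H : \bigl(H \us{A}\boxtimes B\bigr) \us{B}\boxtimes D \longrightarrow H \us{A}\boxtimes D, \qquad (\xi \boxtimes b) \boxtimes d \longmapsto \xi \boxtimes (bd),
\]
and analogously $\beta_H$ for the other path through $C$. Each such map is well-defined and right $D$-linear by the balancing relations, and is manifestly surjective since any elementary tensor $\xi \boxtimes d$ is the image of $\xi \boxtimes 1 \boxtimes d$.

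The key computation is preservation of $D$-valued inner products: using that the inner product on a relative tensor product is given by $\langle \xi_1 \boxtimes b_1,\, \xi_2 \boxtimes b_2\rangle_B = b_1^*\,\langle \xi_1,\xi_2\rangle_A\,b_2$, both sides unfold to $d_1^* b_1^*\langle \xi_1,\xi_2\rangle_A b_2 d_2$ (and symmetrically in $C$ for $\beta_H$). This makes $\alpha_H$ and $\beta_H$ isometric with dense range, hence unitary. Naturality in $H$ is immediate from the formula.

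Finally, the commutativity of the original square of C*-algebras means that the two resulting left $A$-actions on $D$ (namely $a \cdot d = \iota_{B\to D}(\iota_{A\to B}(a))\, d$ and $a \cdot d = \iota_{C\to D}(\iota_{A\to C}(a))\,d$) coincide, so $\alpha_H$ and $\beta_H$ land in the same right $D$-correspondence. The desired natural unitary from $\bullet \us{A}\boxtimes B \us{B}\boxtimes D$ to $\bullet \us{A}\boxtimes C \us{C}\boxtimes D$ is then $\beta^{-1} \circ \alpha$. I do not expect any serious obstacle here: everything reduces to the standard associator/unitor calculus for C*-correspondences, and the hypothesis that the original square of algebras commutes is exactly what is needed to identify the two $A$-$D$-bimodule structures on $D$.
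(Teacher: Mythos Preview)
Your argument is correct and is exactly the standard associator/unitor computation for interior tensor products of C*-correspondences; the paper does not supply a proof at all, merely recording this as an ``easy fact'' and invoking it. Your write-up is therefore a faithful expansion of what the authors leave implicit, and no further comparison is needed.
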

For $0 \leq k \leq l-1 \ $, the unitaries $W_{k+1}^\Lambda$ may be obtained by applying \Cref{lambdakuni} to the squares \begin{tikzcd}
 \mcal R_{A_{k+1}} \arrow[r,"\Lambda_{k+1}"] & \mcal R_{B_{k+1}}  \\
 \mcal R_{A_k} \arrow[u,"\Sigma_{k+1}"] \arrow[swap,r,"\Lambda_k"] & \mcal R_{B_k} \arrow[swap,u,"\Delta_{k+1}"] 
 \end{tikzcd}. 
 
We now explicitly describe the unitaries $W_k^\Lambda : \Delta_k \Lambda_{k-1} \to \Lambda_k \Sigma_k$ for each $k \geq l+1 \ $.
For  $V \in \t{Ob}\left(\mcal R_{A_{k-1}}\right)$, define $\left( W_k^\Lambda \right)_V : V \us{A_{k-1}}\boxtimes H_{k-1} \us{H_{k-1}}\boxtimes {H_k}_{H_k} \to   V \us{A_{k-1}}\boxtimes A_k \us{A_k}\boxtimes {H_k}_{H_k} $ as follows :
\begin{equation*}\label{lambdaunipageref}
V \us{A_{k-1}}\boxtimes H_{k-1} \us{H_{k-1}}\boxtimes {H_k} \ni q \us{A_{k-1}}\boxtimes \xi_1 \us{H_{k-1}}\boxtimes \xi_2 \os{\left( W_k^\Lambda \right)_V} \longmapsto q \us{A_{k-1}}\boxtimes 1_{A_k} \us{A_k}\boxtimes \xi_1 \cdot \xi_2 \ \ \t{for each} \ \ q \in V \ .
\end{equation*}
It is easy to see that each $\left( W_k^\Lambda \right)_V$ is a unitary and natural in $V$, and
 	$\left( W_k^\Lambda \right)_V^*$
is given as follows:
 \[V \us{A_{k-1}}\boxtimes A_k \us{A_k}\boxtimes {H_k} \ni q \us{A_{k-1}}\boxtimes \alpha \us{A_k}\boxtimes \xi \os{\left( W_k^\Lambda \right)_V^*} \longmapsto q \us{A_{k-1}}\boxtimes 1_{H_{k-1}} \us{A_k}\boxtimes \xi_1 \cdot \xi_2 \ \ \t{for each} \ \ q \in V\ . \]
Thus, we get a $1$-cell $\left(\Lambda_\bullet , W_\bullet \right) : \Sigma_\bullet \to \Delta_\bullet$ in $\textbf{UC} \ $. 

\vspace*{4mm}

We now define $\left(\ol \Lambda_{\bullet} , \ol W_\bullet^\Lambda \right) \in \textbf{UC}_1 \left(\Delta_\bullet, \Sigma_\bullet \right) $ so that it becomes dual to $\left(\Lambda_{\bullet}, W_\bullet \right)$ in $\textbf{UC}$.\\
For $0 \leq k \leq l-1$, define $\ol \Lambda_k \coloneqq R_k \circ \left(\bullet \us{B_k}\boxtimes C_{k}\right) : \mcal R_{B_k} \to \mcal R_{A_k} \ $ where $R_k : \mcal R_{C_k} \ra \mcal R_{A_k}$ is the equivalence given in \Cref{CkMkequivalence}.\\
For $ k \geq l $, define $ \ol \Lambda_k \coloneqq \bullet \us {H_k}\btimes H_k : \mcal R_{H_k} \ra \mcal R_{A_k} $.
Here the right action of $ A_k $ on $ H_k $ is given by the inclusion $ A_k \hookrightarrow H_k $ (as in \Cref{HkCE}) and the multiplication in C*-algebra $ H_k $; however, the right $ A_k $-valued inner product is the one defined in \ref{Hkinp} (and not the one coming from conditional expectation).
\comments{\red{Delete this paragraph!} For each $k \geq 0$, define $\ol \Lambda_k : \mcal R_{G_k} \to \mcal R_{A_k}$ as follows:
\[\ol \Lambda_k = \begin{cases}
 \bullet \us{H_k}\boxtimes {H_k}_{A_k} & \ \ \ \ \t{if} \ k \geq l \\
 \Lambda'_k &  \ \ \ \ \t{if} \ 0 \leq k \leq l-1 
 \end{cases}\]
} 
\comments{Define $\ol \Lambda_k \coloneqq  \bullet \us{H_k}\boxtimes {H_k}_{A_k} : \mcal R_{H_k} \to \mcal R_{A_k}$ for each $k \geq l \ $.}
\begin{rem}
Although the functors $\ol \Lambda_k$ may not be adjoint to $\Lambda_k$ for $0 \leq k \leq l-1 $, we will need these functors
to define an adjoint of $\left(\Lambda_\bullet, W_\bullet^\Lambda \right)$ in $\textbf{UC} \ $.
\end{rem}
Our next job is to define the unitary connections $ \left\{\ol W^\Lambda_k\right\}_{k\geq 1} $ for $ \ol \Lambda_\bullet $.
This will be divide into three different ranges for $ k $, namely $ \left\{1, \ldots , l-1\right\}$, $\{l\}$ and $\{l+1, l+2, \ldots\} $; the choice of the natural unitaries in the first two ranges could be arbitrary.

\vspace*{4mm}

\noindent{\textbf{Case $0 \leq k \leq l-2 $}} :
For the unitary connection $\ol W_{k+1}^\Lambda$, we look at the following horizontally stacked squares of functors.
\[
\begin{tikzcd}
	\mcal R_{B_{k+1}} \arrow[r,"\bullet \us {B_{k+1} }\btimes C_{k+1}"] & \mcal R_{C_{k+1}} \arrow[r,"R_{k+1}"] &\mcal R_{A_{k+1}}  \\
	\mcal R_{B_k} \arrow[u,"\Delta_{k+1}"] \arrow[swap,r,"\bullet \us {B_{k} }\btimes C_{k}"] & \mcal R_{C_k} \arrow[swap,u,"\bullet \us {C_k} \btimes C_{k+1}"]  \arrow[swap,r,"R_k"] & \mcal R_{A_k} \arrow[swap,u, "\Sigma_{k+1} "]
\end{tikzcd} \ .
\]
Both the squares are commutative up to natural unitaries; the left one follows from \Cref{lambdakuni} and the right comes from \Cref{CkMkequivalence}.
$\ol W_{k+1}^\Lambda$ is defined as the appropriate composition of above two natural unitaries.

\vspace*{4mm}

\noindent {\textbf{Case $k=l$}}: To define the natural unitary $ \ol W^\Lambda_l : \Sigma_l \ \ol \Lambda_{l-1} \to \ol \Lambda_l \ \Delta_l  $, it is enough to check whether the square
\begin{tikzcd}[row sep = huge , column sep = huge]
\mcal R_{H_{l}} \arrow[r,"\bullet \us{H_l}\boxtimes H_l" , "\ol \Lambda_l"'] & \hspace*{2mm} \mcal R_{A_l}  \\
\mcal R_{S_l \cap C_{l-1}} \arrow[u,"\bullet \us{B_{l-1}}\boxtimes B_l","\Delta_l"'] \arrow[r,"\ol \Lambda_{l-1}", " R_{l-1} \circ \left(\bullet \us{B_{l-1}}\boxtimes C_{l-1}\right)"'] & \hspace*{2mm}\mcal R_{A_{l-1}} \arrow[u,"\bullet \us{A_{l-1}}\boxtimes A_l"',"\Sigma_l"] 
\end{tikzcd}
commutes up to a natural isomorphism; let us call this square $ \mathbb S $.
Consider the horizontal pair of sqaures
\begin{tikzcd}[column sep=huge]
	\mcal R_{S_l} \arrow[r,"\bullet \us {S_l}\btimes C_l"] & \mcal R_{C_l} \arrow[r,"R_l"] &\mcal R_{A_l}  \\
	\mcal R_{S_l \cap C_{l-1}} \arrow[u,"\bullet \us {S_l \cap C_{l-1}} \btimes S_l"] \arrow[swap,r,"\bullet \us {S_l \cap C_{l-1}}\btimes C_{l-1}"] & \mcal R_{C_{l-1}} \arrow[swap,u,"\Psi_l"]  \arrow[swap,r,"R_{l-1}"] & \mcal R_{A_{l-1}} \arrow[swap,u, "\Sigma_l "]
\end{tikzcd} referred as $ \mathbb S_1 $
; the first square of $ \mathbb S_1 $ commutes by \Cref{lambdakuni} and the second follows from \Cref{CkMkequivalence}.
Note that the bottom and the right sides of $ \mathbb S $ matches with that of $ \mathbb S_1 $.

We next claim that the top side of $ \mathbb S_1 $ is naturally isomorphic to $ \bullet \us {S_l} \btimes S_l : \mcal R_{S_l} \to \mcal R_{A_l} $.
To see this, consider the square 
\begin{tikzcd}
	\mcal R_{C_{l}} \arrow[d,"\bullet \us {C_l} \btimes C_l"'] &  \mcal M_l \arrow[l,"G_l"'] \arrow[d,"F_l"] \\
	\mcal R_{S_l}  & \mcal R_{A_l} \arrow[l,"\bullet \us {A_l} \btimes S_l"] 
\end{tikzcd} referred as $ \mathbb S_2 $.
For $ x \in \t{Ob} \left(\mcal M_l\right)  $, the map
\[
F_l (x) \us {A_l} \btimes S_l \ni \xi \us {A_l} \btimes \gamma \longmapsto 
\raisebox{-11 mm}{
\begin{tikzpicture}
\draw[densely dotted] (.1,1.1) to (.1,2);
\draw (-.3,-.7) to (-.3,1.1);
\draw (.3,-.7) to (.3,1.1);
\draw[dashed] (.5,-.7) to (.5,1.1);
\draw[red] (-.5,-.7) to (-.5,.5);
\node at (0,.5) {$ \cdots $};
\node at (0,-.5) {$ \cdots $};
\node at (-.5,.5) {\red{$ \bullet $}};
\node[scale=.8] at (-.75,-.5) {$Q_l$};
\node at (.9,-.5) {$m_0$};
\node at (.9,.5) {$m_0$};
\node at (.3,1.7) {$x$};
\node[draw,thick, rounded corners,fill=white,minimum width=40] at (0,0) {$ \gamma $};
\node[draw,thick, rounded corners,fill=white,minimum width=35] at (.1,1.1) {$ \xi $};
\end{tikzpicture}
}
\us {C_l} \btimes 1_{C_l} \in G_l (x) \us {C_l} \btimes C_l
\]
is $ S_l $-linear and natural in $ x $.
To show that the map is surjective, pick a basic tensor $ \zeta \us {C_l} \btimes 1_{C_l} \in G_l (x) \us {C_l} \btimes C_l $; note that it can be expressed as the image of $ \displaystyle \sum_{\sigma \in \mscr S_l} \zeta \circ \sigma \us {A_l} \btimes \phi^{(l)}_1 \left(\sigma^\dagger \right)$ where $\mscr S_l$ is as in \Cref{Hkbasis} and $\phi_1^{(l)} : H_l \to S_l$ is the isomorphism mentioned in \Cref{HkCstar}.
This concludes natural commutativity of $\mathbb S_2$.
Now, the adjoint of the functors  $ \bullet \us {C_l} \btimes C_l : \mcal R_{C_l} \to \mcal R_{S_l} $ and $ \bullet \us {A_l} \btimes S_l : \mcal R_{A_l} \to \mcal R_{S_l} $ (appearing in the square $ \mathbb S_1 $) are given by $ \bullet \us {S_l} \btimes C_l : \mcal R_{S_l} \to \mcal R_{C_l} $ and $ \bullet \us {S_l} \btimes A_l : \mcal R_{S_l} \to \mcal R_{A_l} $ respectively; this can be achieved by solving the conjugate equations using the set $ \mscr S_l  $ again and the conditional expectations.
Thus, dualizing the square $ \mathbb S_2 $ and using the fact that $ F_l $ is an adjoint equivalence, we get $ R_l \circ \left(\bullet \us {S_l} \btimes {C_l}_{C_l}\right) \cong \left(\bullet \us {S_l} \btimes  {S_l}_{A_l}\right) $.
Using this natural isomorphism and natural commutativity of the square $\mathbb S_1$, we obtain natural commutativity of
\begin{tikzcd}[column sep=huge, row sep=large]
\mcal R_{S_l} \hspace*{2mm} \arrow[r,"\bullet \us {S_l}\btimes S_l"] & \hspace*{2mm}\mcal R_{A_l}  \\
\mcal R_{S_l \cap C_{l-1}} \hspace*{2mm} \arrow[u,"\bullet \us {S_l \cap C_{l-1}} \btimes S_l"] \arrow[swap,r,"R_{l-1} \circ \left(\bullet \us {S_l \cap C_{l-1}}\btimes C_{l-1}\right)"] & \hspace*{2mm} \mcal R_{A_{l-1}} \arrow[swap,u,"\Sigma_l"] 
\end{tikzcd}.
Finally, using the isomorphism $\phi_2^{(l)} : S_l \to H_l $ (as in \Cref{HkCstar}), we get our desired natural commutativity of $\mathbb S$.
Set $\ol W_l^\Lambda$ to be a natural unitary implementing commutativity of $\mathbb S$.  

\vspace*{4mm}

\noindent {\textbf{Case $k \geq l$}}:
To define $\ol W_{k+1}^\Lambda$, we will need the solutions to conjugate equations for $\Lambda_k$ and $\ol \Lambda_k$ for each $k \geq l $.
We will use the following pictorial notations:
\[ \Lambda_k \coloneqq \raisebox{-4mm}{\begin{tikzpicture}
		\draw[orange,->] (0,0) to (0,1);
\end{tikzpicture}} \ \ \ \ \t{and} \ \ \ \ \ol \Lambda_k \coloneqq \raisebox{-4mm}{\begin{tikzpicture}
		\draw[orange,<-] (0,0) to (0,1);
\end{tikzpicture}} \ \ \ \ \t{for each} \ \ k \geq 0  \]

\begin{defn}\label{lambdaksol}
\

\begin{itemize}	
\item [(i)] $\raisebox{-4mm}{\begin{tikzpicture}
\draw[orange,in=-90,out=-90,looseness=2,<-] (0,0) to (1,0);
\end{tikzpicture}} : \t{Id}_{\mcal R_{H_k}} \to \Lambda_k \ol \Lambda_k $ is the natural transformation defined as: \\
$\raisebox{-4mm}{\begin{tikzpicture}
\draw[orange,in=-90,out=-90,looseness=2,<-] (0,0) to (1,0);
\draw[densely dotted] (1.2,0) to (1.2,-.7);
\node[scale=.8] at (1.35,-.5) {$V$};
\end{tikzpicture}} : V \to V \us{H_k}\boxtimes H_k \us{A_k}\boxtimes H_k$ is given by $ q \longmapsto \us{\sigma \in \mscr S_k}{\sum} q \us{H_k}\boxtimes \sigma \us{A_k}\btimes \sigma^\dagger \ $ where $V \in \mcal R_{H_k}$ and $\mscr S_k$ is as in \Cref{Hkbasis}.

\vspace*{2mm}

\item[(ii)]
	$\raisebox{-2mm}{\begin{tikzpicture}
	\draw[orange,in=90,out=90,looseness=2,->] (0,0) to (1,0);
	\end{tikzpicture}} : \Lambda_k \ol \Lambda_k \to \t{Id}_{\mcal R_{H_k}}  $ is the natural transformation defined as : \\ 
	$\raisebox{-3mm}{\begin{tikzpicture}
	\draw[orange,in=90,out=90,looseness=2,->] (0,-.7) to (1,-.7);
	\draw[densely dotted] (1.2,0) to (1.2,-.7);
	\node[scale=.8] at (1.35,-.5) {$V$};
	\end{tikzpicture}}  : V \us{H_k}\boxtimes H_k \us{A_k}\boxtimes H_k \to V $ is given by $q \us{H_k} \boxtimes \xi_1 \us{A_k} \boxtimes \xi_2 \longmapsto q.(\xi_1 \cdot \xi_2)$ where $V \in \mcal R_{H_k}$.

\vspace*{2mm}

 \item [(iii)]$\raisebox{-5mm}{\begin{tikzpicture}
	\draw[orange,in=-90,out=-90,looseness=2,->] (0,0) to (1,0);
	\end{tikzpicture}} : \t{Id}_{\mcal R_{A_k}} \to \ol \Lambda_k \Lambda_k  $ is the natural transformation defined as :\\  
	 $\raisebox{-4mm}{\begin{tikzpicture}
	\draw[orange,in=-90,out=-90,looseness=2,->] (0,0) to (1,0);
	\draw[densely dotted] (1.2,0) to (1.2,-.7);
	\node[scale=.8] at (1.35,-.5) {$V$};
	\end{tikzpicture}} : V \to V \us{A_k}\boxtimes H_k \us{H_k}\boxtimes H_k$ is given by $q  \longmapsto  q \us{A_k} \boxtimes 1_{H_k} \us{H_k}\boxtimes 1_{H_k}$ where $V \in \mcal R_{A_k} $.

\vspace*{2mm}

\item [(iv)]$\raisebox{-2mm}{\begin{tikzpicture}
	\draw[orange,in=90,out=90,looseness=2,<-] (0,0) to (1,0);
	\end{tikzpicture}} : \ol \Lambda_k \Lambda_k \to \t{Id}_{\mcal R_{A_k}} $ is the natural transformation defined as : \\
	 $\raisebox{-4mm}{\begin{tikzpicture}
	\draw[orange,in=90,out=90,looseness=2,<-] (0,-.7) to (1,-.7);
	\draw[densely dotted] (1.2,0) to (1.2,-.7);
	\node[scale=.8] at (1.35,-.5) {$V$};
	\end{tikzpicture}}  : V \us{A_k}\boxtimes H_k \us{H_k}\boxtimes H_k \to V $ is given by $q \us{A_k} \boxtimes \xi_1 \us{H_k} \boxtimes \xi_2  \longmapsto q.\lab \xi_2 , \xi_1^{\dagger} \rab_{A_k}$ where $V \in \mcal R_{A_k}$.
\end{itemize}	

\end{defn}

\begin{lem}\label{lambdaklem}
	\begin{itemize}
	
	\item[(i)] $\raisebox{-3mm}{\begin{tikzpicture}
		\draw[orange,in=-90,out=-90,looseness=2,<-] (0,0) to (1,0);
		\end{tikzpicture}}$, $\raisebox{-1mm}{\begin{tikzpicture}
		\draw[orange,in=90,out=90,looseness=2,->] (0,0) to (1,0);
		\end{tikzpicture}}$, $\raisebox{-3mm}{\begin{tikzpicture}
		\draw[orange,in=-90,out=-90,looseness=2,->] (0,0) to (1,0);
		\end{tikzpicture}}$, $\raisebox{-1mm}{\begin{tikzpicture}
		\draw[orange,in=90,out=90,looseness=2,<-] (0,0) to (1,0);
		\end{tikzpicture}}$ satisfy conjugate equations for $\Lambda_k , \ol \Lambda_k$ for each $k \geq l \ $. 
		
	\item[(ii)]	Also, $\left( \, \raisebox{-3mm}{\begin{tikzpicture}
		\draw[orange,in=-90,out=-90,looseness=2,<-] (0,0) to (1,0);
		\end{tikzpicture}} \, \right)^* = \raisebox{-1mm}{\begin{tikzpicture}
		\draw[orange,in=90,out=90,looseness=2,->] (0,0) to (1,0);
		\end{tikzpicture}} $ and $\left( \, \raisebox{-3mm}{\begin{tikzpicture}
		\draw[orange,in=-90,out=-90,looseness=2,->] (0,0) to (1,0);
		\end{tikzpicture}} \, \right)^* = \raisebox{-1mm}{\begin{tikzpicture}
		\draw[orange,in=90,out=90,looseness=2,<-] (0,0) to (1,0);
		\end{tikzpicture}} \ $.
	\item[(iii)] $
	\raisebox{-6mm}{\begin{tikzpicture}
		\draw[orange,in=-90,out=-90,looseness=2,<-] (0,0) to (1,0);
		\draw[orange,in=90,out=90,looseness=2,->] (0,0) to (1,0);
		\end{tikzpicture}} = {\normalfont\t{id}_{\t{Id}_{\mcal R_{H_k}}}}
	$
	
\end{itemize}  
\end{lem}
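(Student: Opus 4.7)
The plan is to establish all three parts by direct computation, relying on three ingredients: the Pimsner--Popa identity $\sum_{\sigma\in\mscr S_k}\sigma\circ\sigma^{\ast} = 1_{Q_k\Gamma_k\cdots\Gamma_1 m_0}$ from \Cref{Hkbasis}; the Q-system axioms on $(Q_k,m_k,i_k)$ (especially separability $m_k\circ m_k^{\ast}=\t{id}_{Q_k}$, Frobenius, and unitality); and the self-dual structure $coev_{Q_k}=m_k^{\ast}\circ i_k$, $ev_{Q_k}=i_k^{\ast}\circ m_k$ on $Q_k$ from (F1). A preliminary identity I will record at the start is
\[
\xi^{\dagger}=(\t{id}_{Q_k}\btimes\xi^{\ast})\circ(coev_{Q_k}\btimes\t{id}_{\Gamma_k\cdots\Gamma_1 m_0}) \quad \t{for all } \xi\in H_k,
\]
read off directly from the pictorial definition of $\dagger$ in \Cref{HkCstar}.

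I will dispose of (iii) first, as its proof exposes the key reduction. Composing the two natural transformations at $q\in V\in\mcal R_{H_k}$ gives $\sum_{\sigma\in\mscr S_k}q\cdot(\sigma\cdot\sigma^{\dagger})$, so it suffices to show $\sum_\sigma \sigma\cdot\sigma^{\dagger}=1_{H_k}$. Substituting the preliminary identity into the definition of the $H_k$-multiplication yields
\[
\sigma\cdot\sigma^{\dagger}=(m_k\btimes\t{id})\circ\bigl(\t{id}_{Q_k}\btimes(\sigma\circ\sigma^{\ast})\bigr)\circ(coev_{Q_k}\btimes\t{id}),
\]
and summing over $\sigma$ collapses the middle factor to the identity, leaving $(m_k\circ coev_{Q_k})\btimes\t{id}=(m_k\circ m_k^{\ast}\circ i_k)\btimes\t{id}=i_k\btimes\t{id}=1_{H_k}$ by separability and unitality. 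For (ii), the adjoint relations reduce to the pointwise identity $\lab\eta^{\dagger}\cdot\zeta,\xi\rab_{A_k}=\lab\zeta,\eta\cdot\xi\rab_{A_k}$ in $H_k$ (and its mirror), which follows from $\lab u,v\rab_{A_k}=v^{\ast}\circ u$ (see \Cref{Hkinp}) together with the fact in \Cref{HkCstar} that $\dagger$ is the C*-algebra adjoint of the $H_k$-multiplication.

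For (i), I will verify the two zigzag identities for each of the two adjunctions implicit in the data: the pair defined in (i) and (iv) of \Cref{lambdaksol}, exhibiting $\ol\Lambda_k$ as a left adjoint of $\Lambda_k$, and the pair defined in (iii) and (ii), exhibiting it as a right adjoint. Two of the four zigzags collapse immediately using unitality of the $H_k$-product (for instance, the right-adjunction zigzag on $\Lambda_k$ sends $q\us{A_k}\btimes\xi$ to $q\us{A_k}\btimes 1_{H_k}\cdot(1_{H_k}\cdot\xi)=q\us{A_k}\btimes\xi$). The zigzag on $\ol\Lambda_k$ for the left adjunction reduces to $\sum_\sigma \sigma\circ\sigma^{\ast}\circ\eta=\eta$, which is the Pimsner--Popa identity. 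The remaining zigzag, which I expect to be the main obstacle, is the left-adjunction zigzag on $\Lambda_k$: it reduces to
\[
\sum_{\sigma\in\mscr S_k}\lab\sigma,\xi^{\dagger}\rab_{A_k}\cdot\sigma^{\dagger}=\xi \quad \t{in } H_k
\]
for every $\xi\in H_k$. My strategy is to rewrite the left-hand side via functoriality of $Q_k$ as $Q_k\bigl((\xi^{\dagger})^{\ast}\bigr)\circ\sum_\sigma\bigl(Q_k(\sigma)\circ\sigma^{\dagger}\bigr)$; use the preliminary identity for $\sigma^{\dagger}$ and the Pimsner--Popa identity to collapse the inner sum to $coev_{Q_k}\btimes\t{id}_{\Gamma_k\cdots\Gamma_1 m_0}$; compute $Q_k((\xi^{\dagger})^{\ast})=(\t{id}_{Q_k}\btimes ev_{Q_k}\btimes\t{id})\circ(\t{id}_{Q_kQ_k}\btimes\xi)$ via functoriality and self-duality; and finally apply the snake identity $(\t{id}_{Q_k}\btimes ev_{Q_k})\circ(coev_{Q_k}\btimes\t{id}_{Q_k})=\t{id}_{Q_k}$ (which holds for the $Q_k$-self-duality by Frobenius and unitality) to conclude that the full composition equals $\xi$.
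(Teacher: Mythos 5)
Your proposal is correct, and it is worth recording because the paper itself offers essentially no argument here: parts (i) and (ii) are declared ``straightforward and left to the reader,'' and part (iii) is dispatched by citing \Cref{HkCE} together with [Example 3.25 of \cite{CPJP}], i.e.\ by invoking the general fact that a Pimsner--Popa basis for the finite-index faithful conditional expectation $E_k : H_k \to A_k$ yields the ``circle equals identity'' relation. Your route to (iii) is genuinely different and more elementary: you bypass $E_k$ entirely and compute $\sum_{\sigma}\sigma\cdot\sigma^{\dagger} = m_k\circ Q_k\bigl(\sum_\sigma\sigma\circ\sigma^{\ast}\bigr)\circ coev_{Q_k} = m_k\circ m_k^{\ast}\circ i_k = i_k = 1_{H_k}$ directly from \Cref{Hkbasis}, separability and unitality, which keeps the argument self-contained and makes visible exactly which Q-system axioms are used. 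Your treatment of (i) is also sound: the four triangle identities do split as you say into two that collapse by unitality of the $H_k$-product, one that is literally the Pimsner--Popa identity $\sum_\sigma\sigma\circ\sigma^{\ast}\circ\eta=\eta$, and the genuinely nontrivial one $\sum_{\sigma}\lab\sigma,\xi^{\dagger}\rab_{A_k}.\sigma^{\dagger}=\xi$, which your chain (naturality of $coev_{Q_k}$, collapse of $\sum_\sigma\sigma\sigma^{\ast}$, then the snake identity $(\t{id}\btimes ev_{Q_k})\circ(coev_{Q_k}\btimes\t{id})=\t{id}_{Q_k}$, itself a Frobenius-plus-unitality consequence) does establish. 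The one place you are slightly loose is in (ii): the identity $\lab\eta^{\dagger}\cdot\zeta,\xi\rab_{A_k}=\lab\zeta,\eta\cdot\xi\rab_{A_k}$ does not follow formally from $\dagger$ being the involution of the C*-algebra $H_k$ (\Cref{HkCstar} concerns the abstract $\ast$-algebra structure, not its compatibility with the $A_k$-valued inner product of \Cref{Hkinp}); it requires its own short computation, namely $m_k\circ Q_k(coev_{Q_k})=m_k^{\ast}$, which again is Frobenius plus unitality. Since that is exactly the same style of verification you carry out elsewhere, this is a presentational gap rather than a mathematical one.
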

\begin{proof}
The verification of (i) and (ii) are straightforward and is left to the reader.

For (iii), we use \Cref{HkCE} and [Example 3.25 of \cite{CPJP}].
\end{proof}
\comments{\begin{rem}\label{lambdaWk}
For our later purposes we now describe the unitaries $W_k^\Lambda : \Delta_k \Lambda_{k-1} \to \Lambda_k \Sigma_k$ for each $k \geq l+1 \ $.  

For every $V \in \mcal R_{A_{k-1}}$ define $\left( W_k^\Lambda \right)_V : V \us{A_{k-1}}\boxtimes H_{k-1} \us{H_{k-1}}\boxtimes {H_k}_{H_k} \to   V \us{A_{k-1}}\boxtimes A_k \us{A_k}\boxtimes {H_k}_{H_k} $ as follows :
\[V \us{A_{k-1}}\boxtimes H_{k-1} \us{H_{k-1}}\boxtimes {H_k} \ni q \us{A_{k-1}}\boxtimes \xi_1 \us{H_{k-1}}\boxtimes \xi_2 \os{\left( W_k^\Lambda \right)_V} \longmapsto q \us{A_{k-1}}\boxtimes 1_{A_k} \us{A_k}\boxtimes \xi_1 \cdot \xi_2 \ \ \t{for each} \ \ q \in V \]
It is easy to see that each $\left( W_k^\Lambda \right)_V$ is a unitary natural in $V$ and
$\left( W_k^\Lambda \right)_V^* : V \us{A_{k-1}}\boxtimes A_k \us{A_k}\boxtimes {H_k}_{H_k} \to V \us{A_{k-1}}\boxtimes H_{k-1} \us{H_{k-1}}\boxtimes {H_k}_{H_k} $ is given as follows :
\[V \us{A_{k-1}}\boxtimes A_k \us{A_k}\boxtimes {H_k} \ni q \us{A_{k-1}}\boxtimes \alpha \us{A_k}\boxtimes \xi \os{\left( W_k^\Lambda \right)_V^*} \longmapsto q \us{A_{k-1}}\boxtimes 1_{H_{k-1}} \us{A_k}\boxtimes \xi_1 \cdot \xi_2 \ \ \t{for each} \ \ q \in V \]
\end{rem} }
Pictorially, we denote $W_k^\Lambda$ by
\raisebox{-4mm}{
	\begin{tikzpicture}
	\draw[white,line width=1mm,out=-90,in=90] (-.25,.5) to (.25,-.5);
	\draw[out=-90,in=90,orange,<-] (-.25,.5) to (.25,-.5);
	\begin{scope}[on background layer]
	\draw[out=90,in=-90] (-.25,-.5) to (.25,.5);
	\end{scope}
	\node[right] at (0.15,-0.4) {$\Lambda_{k-1}$};
	\node[right] at (0.15,0.3) {$\Sigma_{k}$};
	\node[left] at (-0.15,-0.4) {$\Delta_{k}$};
	\node[left] at (-0.15,0.3) {$\Lambda_{k}$};
	\end{tikzpicture}} ,
${\left(W_k^\Lambda\right)}^*$ by
\raisebox{-4mm}{
	\begin{tikzpicture}
	\draw[white,line width=1mm,out=-90,in=90] (.25,.5) to (-.25,-.5);
	\draw[out=-90,in=90,orange,<-] (.25,.5) to (-.25,-.5);
	\begin{scope}[on background layer]
	\draw[out=90,in=-90] (.25,-.5) to (-.25,.5);
	\end{scope}
	\node[right] at (0.15,-0.4) {$\Sigma_{k}$};
	\node[right] at (0.15,0.3) {$\Lambda_{k-1}$};
	\node[left] at (-0.15,-0.4) {$\Lambda_{k}$};
	\node[left] at (-0.15,0.3) {$\Delta_{k}$};
	\end{tikzpicture}}, 
 $\ol W_k^\Lambda$ by \raisebox{-6mm}{
	\begin{tikzpicture}
	\draw[white,line width=1mm,out=-90,in=90] (-.25,.5) to (.25,-.5);
	\draw[out=-90,in=90,orange,->] (-.25,.5) to (.25,-.5);
	\begin{scope}[on background layer]
	\draw[out=90,in=-90] (-.25,-.5) to (.25,.5);
	\end{scope}
	\node[right] at (0.2,-0.4) {$\ol \Lambda_{k-1}$};
	\node[right] at (0.15,0.3) {$\Delta_{k}$};
	\node[left] at (-0.15,-0.4) {$\Sigma_{k}$};
	\node[left] at (-0.15,0.3) {$\ol \Lambda_{k}$};
	\end{tikzpicture}} and $\left(\ol W_k^\Lambda\right)^* $  by 
 \raisebox{-8mm}{
 	\begin{tikzpicture}
 	\draw[out=-90,in=90](-.25,.5) to (.25,-.5);
 	\draw[white,line width=1mm,out=-90,in=90] (-.25,-.5) to (.25,.5);
 	\draw[orange,out=90,in=-90,<-] (-.25,-.5) to (.25,.5);
 	\node[right] at (0.2,-0.4) {$\Delta_{k}$};
 	\node[right] at (0.15,0.3) {$\ol \Lambda_{k-1}$};
 	\node[left] at (-0.15,-0.4) {$\ol \Lambda_{k}$};
 	\node[left] at (-0.15,0.3) {$\Sigma_{k}$};
 	\end{tikzpicture}} for each $k \geq 1 \ $.
We have already defined all $ W^\Lambda_k $'s in \cpageref{lambdaunipageref}, and $ \ol W^\Lambda_k $ for $ 1\leq k \leq l $ in the above two cases.
Now, for $k \geq l$, we define

\noindent $\ol W_{k+1}^\Lambda = \raisebox{-6mm}{
	\begin{tikzpicture}
	\draw[white,line width=1mm,out=-90,in=90] (-.25,.5) to (.25,-.5);
	\draw[out=-90,in=90,orange,->] (-.25,.5) to (.25,-.5);
	\begin{scope}[on background layer]
	\draw[out=90,in=-90] (-.25,-.5) to (.25,.5);
	\end{scope}
	\node[right] at (0.2,-0.4) {$\ol \Lambda_{k}$};
	\node[right] at (0.15,0.3) {$\Delta_{k+1}$};
	\node[left] at (-0.15,-0.4) {$\Sigma_{k+1}$};
	\node[left] at (-0.15,0.3) {$\ol \Lambda_{k+1}$};
	\end{tikzpicture}}
\coloneqq
\raisebox{-10mm}{
	\begin{tikzpicture}
	\draw[white,line width=1mm,out=-90,in=90] (.25,.5) to (-.25,-.5);
	\draw[out=-90,in=90,orange,<-] (.25,.5) to (-.25,-.5);
	\begin{scope}[on background layer]
	\draw[out=90,in=-90] (.25,-.5) to (-.25,.5);
	\draw (-.25,.5) to (-.25,.8);
	\draw (.25,-.5) to (.25,-.8);
	\end{scope}
	\node[right,scale=0.7] at (0,-1) {$\Sigma_{k+1}$};
	\node[left,scale=0.7] at (0,1) {$\Delta_{k+1}$};
	\draw[orange,in=90,out=90,looseness=2,->] (.25,.5) to (.75,.5);
	\draw[orange,->] (.75,.5) to (.75,-.8);
	\draw[orange,in=-90,out=-90,looseness=2,<-] (-.25,-.5) to (-.75,-.5);
	\draw[orange,<-] (-.75,-.5) to (-.75,.8);
	\end{tikzpicture}} \	\t{and} \ $ $\left(\ol W_{k+1}^\Lambda\right)^* $ $ = 
\raisebox{-8mm}{
	\begin{tikzpicture}
	\draw[out=-90,in=90](-.25,.5) to (.25,-.5);
	\draw[white,line width=1mm,out=-90,in=90] (-.25,-.5) to (.25,.5);
	\draw[orange,out=90,in=-90,<-] (-.25,-.5) to (.25,.5);
	\node[right] at (0.2,-0.4) {$\Delta_{k+1}$};
	\node[right] at (0.15,0.3) {$\ol \Lambda_{k}$};
	\node[left] at (-0.15,-0.4) {$\ol \Lambda_{k+1}$};
	\node[left] at (-0.15,0.3) {$\Sigma_{k+1}$};
	\end{tikzpicture}} 
\coloneqq
\raisebox{-8mm}{
	\begin{tikzpicture}
	\draw[white,line width=1mm,out=-90,in=90] (-.25,.5) to (.25,-.5);
	\draw[out=-90,in=90,orange,<-] (-.25,.5) to (.25,-.5);
	\draw[orange,in=90,out=90,looseness=2,->] (-.25,.5) to (-.75,.5);
	\draw[orange] (-.75,.5) to (-.75,-.8);
	\draw[orange,in=-90,out=-90,looseness=2,<-] (.25,-.5) to (.75,-.5);
	\draw[orange,<-] (.75,-.5) to (.75,.8);
	\begin{scope}[on background layer]
	\draw[out=90,in=-90] (-.25,-.5) to (.25,.5);
	\end{scope}
	\draw (.25,.5) to (.25,.8);
	\draw (-.25,-.5) to (-.25,-.8);
	\node[right,scale=0.8] at (0.16,0.9) {$\Sigma_{k+1}$};
	\node[left,scale=0.8] at (-0.15,-0.9) {$\Delta_{k+1}$};
	\end{tikzpicture}} $\\
which turn out to be natural unitaries by the following remark.
\begin{rem}\label{lamdakunidual}
For each $k \geq l$ and $V \in \mcal R_{H_k}$, $q \in V$, $\xi \in H_k$, $\alpha \in A_{k+1}$, $\eta$, $\zeta \in H_{k+1}$ the element $ \left(\ol W_{k+1}^\Lambda \right)_V \left(q \us{H_k}\boxtimes \xi \us{A_k}\boxtimes \alpha \right) $ can be expressed as
\begin{align*}
	\raisebox{-10mm}{
	\begin{tikzpicture}
	\draw[white,line width=1mm,out=-90,in=90] (.25,.5) to (-.25,-.5);
	\draw[out=-90,in=90,orange,<-] (.25,.5) to (-.25,-.5);
	\begin{scope}[on background layer]
	\draw[out=90,in=-90] (.25,-.5) to (-.25,.5);
	\draw (-.25,.5) to (-.25,.8);
	\draw (.25,-.5) to (.25,-.8);
	\end{scope}
	\node[right,scale=0.7] at (-.2,-.9) {$\Sigma_{k+1}$};
	\node[left,scale=0.7] at (0,1) {$\Delta_{k+1}$};
	\node[scale=.8] at (1.4,-.5) {$V$}; 
	\draw[orange,in=90,out=90,looseness=2,->] (.25,.5) to (.75,.5);
	\draw[orange,->] (.75,.5) to (.75,-.8);
	\draw[orange,in=-90,out=-90,looseness=2,<-] (-.25,-.5) to (-.75,-.5);
	\draw[orange,<-] (-.75,-.5) to (-.75,.8);
	\draw[dotted] (1.2,.8) to (1.2,-.8);
	\end{tikzpicture}} \Big(q \us{H_k}\boxtimes \xi \us{A_k}\boxtimes \alpha \Big)
&= \raisebox{-10mm}{
	\begin{tikzpicture}
	\draw[white,line width=1mm,out=-90,in=90] (.25,.5) to (-.25,-.5);
	\draw[out=-90,in=90,orange,<-] (.25,.5) to (-.25,-.5);
	\begin{scope}[on background layer]
	\draw[out=90,in=-90] (.25,-.5) to (-.25,.5);
	\draw (-.25,.5) to (-.25,.8);
	\draw (.25,-.5) to (.25,-.8);
	\end{scope}
	\node[right,scale=0.7] at (-.2,-.9) {$\Sigma_{k+1}$};
	\node[left,scale=0.7] at (0,1) {$\Delta_{k+1}$};
	\node[scale=.8] at (1.4,-.5) {$V$}; 
	\draw[orange,in=90,out=90,looseness=2,->] (.25,.5) to (.75,.5);
	\draw[orange,->] (.75,.5) to (.75,-.8);
	\draw[orange,<-] (-.75,-.8) to (-.75,.8);
	\draw[orange,<-] (-.25,-.5) to (-.25,-.8);
	\draw[dotted] (1.2,.8) to (1.2,-.8);
	\end{tikzpicture}} \Big(q \us{H_k}\boxtimes \xi \us{A_k}\boxtimes \alpha \us{A_{k+1}} \boxtimes 1_{H_{k+1}} \us{H_{k+1}} \boxtimes 1_{H_{k+1}} \Big) \\
&= \raisebox{-8mm}{
	\begin{tikzpicture}
	\begin{scope}[on background layer]
	\draw (-.25,-.8) to (-.25,.8);
	\end{scope}
	\node[left,scale=0.7] at (0,1) {$\Delta_{k+1}$};
	\node[scale=.8] at (1.4,-.5) {$V$}; 
	\draw[orange,in=90,out=90,looseness=2,->] (.25,.5) to (.75,.5);
	\draw[orange,->] (.75,.5) to (.75,-.8);
	\draw[orange,<-] (-.75,-.8) to (-.75,.8);
	\draw[orange,<-] (.25,.5) to (.25,-.8);
	\draw[dotted] (1.2,.8) to (1.2,-.8);
	\end{tikzpicture}} \Big(q \us{H_k}\boxtimes \xi \us{A_k}\boxtimes 1_{H_k} \us{A_{k+1}} \boxtimes \alpha \us{H_{k+1}} \boxtimes 1_{H_{k+1}} \Big) = q.\xi \us{H_k} \boxtimes \alpha \us{H_{k+1}} \boxtimes 1_{H_{k+1}}
\end{align*}
and $\left(\ol W_{k+1}^\Lambda \right)_V^* \left(q \us{H_k}\boxtimes \eta  \us{H_{k+1}}\boxtimes \zeta \right)$ can be expressed as 
\begin{align*}
	\raisebox{-10mm}{
	\begin{tikzpicture}
	\draw[white,line width=1mm,out=-90,in=90] (-.25,.5) to (.25,-.5);
	\draw[out=-90,in=90,orange,<-] (-.25,.5) to (.25,-.5);
	\draw[orange,in=90,out=90,looseness=2,->] (-.25,.5) to (-.75,.5);
	\draw[orange] (-.75,.5) to (-.75,-.8);
	\draw[orange,in=-90,out=-90,looseness=2,<-] (.25,-.5) to (.75,-.5);
	\draw[orange,<-] (.75,-.5) to (.75,.8);
	\begin{scope}[on background layer]
	\draw[out=90,in=-90] (-.25,-.5) to (.25,.5);
	\end{scope}
	\draw (.25,.5) to (.25,.8);
	\draw (-.25,-.5) to (-.25,-.8);
	\draw[dotted] (1,-.8) to (1,.8);
	\node[scale=.8] at (1.4,-.5) {$V$};
	\node[right,scale=0.8] at (-.2,1) {$\Sigma_{k+1}$};
	\node[left,scale=0.8] at (0.15,-0.8) {$\Delta_{k+1}$};
	\end{tikzpicture}}\left(q \us{H_k}\boxtimes \eta  \us{H_{k+1}}\boxtimes \zeta \right)
	&= \raisebox{-10mm}{
	\begin{tikzpicture}
	\draw[white,line width=1mm,out=-90,in=90] (-.25,.5) to (.25,-.5);
	\draw[out=-90,in=90,orange,<-] (-.25,.5) to (.25,-.5);
	\draw[orange,in=90,out=90,looseness=2,->] (-.25,.5) to (-.75,.5);
	\draw[orange] (-.75,.5) to (-.75,-.8);
	\draw[orange,<-] (.75,-.8) to (.75,.8);
	\draw[orange] (.25,-.5) to (.25,-.8);
	\begin{scope}[on background layer]
	\draw[out=90,in=-90] (-.25,-.5) to (.25,.5);
	\end{scope}
	\draw (.25,.5) to (.25,.8);
	\draw (-.25,-.5) to (-.25,-.8);
	\draw[dotted] (1,-.8) to (1,.8);
	\node[scale=.8] at (1.4,-.5) {$V$};
	\node[right,scale=0.8] at (-.2,1) {$\Sigma_{k+1}$};
	\node[left,scale=0.8] at (0.15,-0.8) {$\Delta_{k+1}$};
	\end{tikzpicture}} \left(\us{\sigma \in \mscr S_k}{\sum} q \us{H_k}\boxtimes \sigma \us{A_k}\boxtimes \sigma^\dagger \us{H_k}\boxtimes \eta \us{H_{k+1}}\boxtimes \zeta \right)\\ 
  &= \raisebox{-10mm}{
 	\begin{tikzpicture}
 	\draw[orange,in=90,out=90,looseness=2,->] (-.25,.5) to (-.75,.5);
 	\draw[orange] (-.75,.5) to (-.75,-.8);
 	\draw[orange,<-] (.75,-.8) to (.75,.8);
 	\draw (.25,.8) to (.25,-.8);
 	\begin{scope}[on background layer]
 	\end{scope}
 	\draw (.25,.5) to (.25,.8);
 	\draw[orange,<-] (-.25,.5) to (-.25,-.8);
 	\draw[dotted] (1,-.8) to (1,.8);
 	\node[scale=.8] at (1.4,-.5) {$V$};
 	\node[right,scale=0.8] at (-.2,1) {$\Sigma_{k+1}$};
 	\end{tikzpicture}} \left(\us{\sigma \in \mscr S_k}{\sum} q \us{H_k}\boxtimes \sigma \us{A_k}\boxtimes 1_{A_{k+1}} \us{A_{k+1}}\boxtimes \sigma^\dagger \cdot \eta \us{H_{k+1}}\boxtimes \zeta \right)\\
  &= \us{\sigma \in \mscr S_k}{\sum} q \us{H_k}\boxtimes \sigma \us{A_k}\boxtimes \lab \zeta , \eta^\dagger \cdot \sigma \rab_{A_{k+1}} \ .
 \end{align*}
It is a straightforward verification that each $\left(\ol W_{k+1}^\Lambda\right)_V$ is a unitary and natural in $V \ $.
\end{rem}
Thus, we have defined a $ 1 $-cell $\left(\ol \Lambda_{\bullet}, \ol W_\bullet^\Lambda\right) $ in $\textbf{UC}$ from $\Delta_\bullet$ to $ \Sigma_\bullet $.
We need to prove that $\left(\ol \Lambda_{\bullet}, \ol W_\bullet^\Lambda\right)$ is dual to $\left( \Lambda_{\bullet},  W_\bullet^\Lambda\right) \ $.
In order to define the solution to conjugate equation (which is in fact a pair of $ 2 $-cells in \textbf{UC}), we have the liberty to ignore finitely many terms and define them eventually (by \Cref{UCisomorphism}).
By \Cref{lambdaklem}, we see that there are solutions to conjugate equations for $\Lambda_{k}$ and $\ol \Lambda_{k}$ for each $k \geq l \ $. So, we are only left with showing exchange relations of solutions eventually.

 We now verify that $\raisebox{-4mm}{\begin{tikzpicture}
	\draw[orange,in=-90,out=-90,looseness=2,<-] (0,0) to (1,0);
	\end{tikzpicture}}$ and $\raisebox{-4mm}{\begin{tikzpicture}
	\draw[orange,in=-90,out=-90,looseness=2,->] (0,0) to (1,0);
	\end{tikzpicture}}$ satisfy exchange relations for $k \geq l \ $. 

\begin{rem}\label{xrellambdak}
The solutions to conjugate equations for $\Lambda_k$ and $ \Lambda_{k+1} $ (as in \Cref{lambdaksol}) satisfy exchange relation eventually for all $ k $ with respect to $W_\bullet^\Lambda$ and $\ol W_\bullet^\Lambda \ $.
This directly follows from the fact $ W^\Lambda_k $'s and $ \ol W^\Lambda_k $'s are unitaries.
Nevertheless, we still furnish a proof below.
Note that
\[
\raisebox{-10mm}{
 	\begin{tikzpicture}[rotate=60,scale=1.2]
 	\draw[orange,in=-90,out=-90,looseness=2,->] (0,0) to (0.5,0);
 	\draw[orange,<-] (0,0) to (0,1);
 	\draw[orange] (0.5,0) to (0.5,1);
 	\draw (-.6,.6) to (-.05,.6);
 	\draw (0.05,.6) to (0.45,.6);
 	\draw (0.55,.6) to (1,.6);
 	\node[left,scale=0.7] at (-.6,.5) {$\Sigma_{k+1}$};
 	\node[left,scale=0.7] at (-.2,0) {$\ol \Lambda_k$};
 	\node[left,scale=0.6] at (.4,.6) {$\Delta_{k+1}$};
 	\node[left,scale=0.7] at (0,.7) {$\ol \Lambda_{k+1}$};
 	\node[right,scale=0.7] at (.8,.56) {$\Sigma_{k+1}$};  
 	\node[right,scale=0.7] at (.5,0) {$\Lambda_k$};
 	\node[right,scale=0.7] at (.5,1.2) {$\Lambda_{k+1}$};
 	\end{tikzpicture}} = 
 \raisebox{-16mm}{
 	\begin{tikzpicture}
 	\draw[white,line width=1mm,out=-90,in=90] (.25,.5) to (-.25,-.5);
 	\draw[out=-90,in=90,orange,<-] (.25,.5) to (-.25,-.5);
 	\begin{scope}[on background layer]
 	\draw[out=90,in=-90] (.25,-.5) to (-.25,.5);
 	\end{scope}
 	\node[right,scale=0.7] at (0.15,-0.7) {$\Sigma_{k+1}$};
 	\node[left,scale=0.7] at (-0.15,0.3) {$\Delta_{k+1}$};
 	\draw[orange,in=90,out=90,looseness=2,->] (.25,.5) to (.75,.5);
 	\draw[orange,->] (.75,.5) to (.75,-.2);
 	\draw[orange,in=-90,out=-90,looseness=2,<-] (-.25,-.5) to (-.75,-.5);
 	\draw[orange,<-] (-.75,-.5) to (-.75,0);
 	\draw[orange,in=-90,out=-90,looseness=2] (.75,-.2) to (1.25,-.2);
 	\draw[orange] (1.25,-.2) to (1.25,1);
 	\draw[in=-90,out=90] (-.25,.5) to (.25,1.5);
 	\draw (.25,1.5) to (.25,2);
 	\draw[white,line width=1mm,in=-90,out=90] (1.25,1) to (-.5,2);
 	\draw[orange,in=-90,out=90,->] (1.25,1) to (-.5,2);
 	\end{tikzpicture}}
  = \raisebox{-10mm}{
 	\begin{tikzpicture}
 	\draw[white,line width=1mm,out=-90,in=90] (.25,.5) to (-.25,-.5);
 	\draw[out=-90,in=90,orange,<-] (.25,.5) to (-.25,-.5);
 	\begin{scope}[on background layer]
 	\draw[out=90,in=-90] (.25,-.5) to (-.25,.5);
 	\end{scope}
 	\node[right,scale=0.7] at (0.2,-0.4) {$\Sigma_{k+1}$};
 	\node[left,scale=0.7] at (-0.15,0.3) {$\Delta_{k+1}$};
 	\draw[orange,in=-90,out=-90,looseness=2,<-] (-.25,-.5) to (-.75,-.5);
 	\draw[orange,<-] (-.75,-.5) to (-.75,0);
 	\draw[in=-90,out=90] (-.25,.5) to (.25,1.5);
 	\draw[white,line width=1mm,in=-90,out=90,looseness=2] (.25,.5) to (-.25,1.5);
 	\draw[orange,in=-90,out=90,looseness=2] (.25,.5) to (-.25,1.5);
 	\end{tikzpicture}}
 =
  \raisebox{-8mm}{
 	\begin{tikzpicture}
 	\draw[orange,in=-90,out=-90,looseness=2,->] (0,0) to (.5,0);
 	\draw[orange,<-] (0,0) to (0,1);
 	\draw[orange,->] (0.5,0) to (0.5,1);
 	\draw (1,-.4) to (1,1);
 	\node[right,scale=0.7] at (1,.5) {$\Sigma_{k+1}$};
 	\end{tikzpicture}}
\]
and
\[\raisebox{-10mm}{
 	\begin{tikzpicture}[rotate=60,scale=1.2]
 	\draw[orange,in=-90,out=-90,looseness=2,<-] (0,0) to (0.5,0);
 	\draw[orange] (0,0) to (0,1);
 	\draw[orange,<-] (0.5,0) to (0.5,1);
 	\draw (-.6,.6) to (-.05,.6);
 	\draw (0.05,.6) to (0.45,.6);
 	\draw (0.55,.6) to (1,.6);
 	\node[left,scale=0.7] at (-.6,.5) {$\Delta_{k+1}$};
 	\node[left,scale=0.7] at (-.2,0) {$\Lambda_k$};
 	\node[left,scale=0.6] at (.4,.6) {$\Sigma_{k+1}$};
 	\node[left,scale=0.7] at (0,.7) {$\Lambda_{k+1}$};
 	\node[right,scale=0.7] at (.8,.56) {$\Delta_{k+1}$};  
 	\node[right,scale=0.7] at (.5,0) {$\ol \Lambda_k$};
 	\node[right,scale=0.7] at (.5,1.2) {$\ol \Lambda_{k+1}$};
 	\end{tikzpicture}}=
 \raisebox{-12mm}{
 	\begin{tikzpicture}
 	\draw[white,line width=1mm,out=-90,in=90] (.25,.5) to (-.25,-.5);
 	\draw[out=-90,in=90,orange,<-] (.25,.5) to (-.25,-.5);
 	\begin{scope}[on background layer]
 	\draw[out=90,in=-90] (.25,-.5) to (-.25,.5);
 	\end{scope}
 	\node[right,scale=0.7] at (-.2,-1.2) {$\Sigma_{k+1}$};
 	\node[left,scale=0.7] at (-0.15,0.3) {$\Delta_{k+1}$};
 	\draw[orange,in=90,out=90,looseness=2,->] (.25,.5) to (.75,.5);
 	\draw[orange,->] (.75,.5) to (.75,-.2);
 	\draw[orange,in=-90,out=-90,looseness=2,<-] (-.25,-.5) to (-.75,-.5);
 	\draw[orange,<-] (-.75,-.5) to (-.75,0);
 	\draw[in=90,out=-90] (.25,-.5) to (-.75,-1.5);
 	\draw (-.75,-1.5) to (-.75,-2.4);
 	\draw[white,line width=1mm] (-.25,-2) to (-1.25,-1);
 	\draw[orange,in=-90,out=90,->] (-.25,-2) to (-1.25,-1);
 	\draw[orange,in=-90,out=-90,looseness=2,<-] (-.25,-2) to (.75,-2);
 	\draw[orange] (.75,-2) to (.75,-.2);
 	\draw[orange] (-1.25,-1) to (-1.25,0);
 	\end{tikzpicture}}
 = \raisebox{-6mm}{
	\begin{tikzpicture}
		\draw[orange,in=-90,out=-90,looseness=2,<-] (0,0) to (.5,0);
		\draw[orange] (0,0) to (0,1);
		\draw[orange,<-] (.5,0) to (.5,1);
		\draw (1,-.4) to (1,1);
		\node[scale=.8] at (1.5,.5) {$\Delta_{k+1}$}; 
\end{tikzpicture}} \ \ \ \ .
 \]
\comments{ Thus, $\raisebox{-10mm}{
 	\begin{tikzpicture}[rotate=60,scale=1.2]
 	\draw[orange,in=-90,out=-90,looseness=2,<-] (0,0) to (0.5,0);
 	\draw[orange] (0,0) to (0,1);
 	\draw[orange,<-] (0.5,0) to (0.5,1);
 	\draw (-.6,.6) to (-.05,.6);
 	\draw (0.05,.6) to (0.45,.6);
 	\draw (0.55,.6) to (1,.6);
 	\node[left,scale=0.7] at (-.6,.5) {$\Delta_{k+1}$};
 	\node[left,scale=0.7] at (-.2,0) {$\Lambda_k$};
 	\node[left,scale=0.6] at (.4,.6) {$\Sigma_{k+1}$};
 	\node[left,scale=0.7] at (0,.7) {$\Lambda_{k+1}$};
 	\node[right,scale=0.7] at (.8,.56) {$\Delta_{k+1}$};  
 	\node[right,scale=0.7] at (.5,0) {$\ol \Lambda_k$};
 	\node[right,scale=0.7] at (.5,1.2) {$\ol \Lambda_{k+1}$};
 	\end{tikzpicture}} = \raisebox{-6mm}{
 	\begin{tikzpicture}
 	\draw[orange,in=-90,out=-90,looseness=2,<-] (0,0) to (.5,0);
 	\draw[orange] (0,0) to (0,1);
 	\draw[orange,<-] (.5,0) to (.5,1);
 	\draw (1,-.4) to (1,1);
 	\end{tikzpicture}} \ $ if and only if 
 \begin{equation}\label{barwunitary}
 \raisebox{-12mm}{
 	\begin{tikzpicture}
 	\draw[white,line width=1mm,out=-90,in=90] (.25,.5) to (-.25,-.5);
 	\draw[out=-90,in=90,orange,<-] (.25,.5) to (-.25,-.5);
 	\begin{scope}[on background layer]
 	\draw[out=90,in=-90] (.25,-.5) to (-.25,.5);
 	\end{scope}
 	\node[right,scale=0.7] at (-.2,-1.2) {$\Sigma_{k+1}$};
 	\node[left,scale=0.7] at (.1,0.8) {$\Delta_{k+1}$};
 	\draw[orange,in=90,out=90,looseness=2,->] (.25,.5) to (.75,.5);
 	\draw[orange,->] (.75,.5) to (.75,-.2);
 	\draw[orange,in=-90,out=-90,looseness=2,<-] (-.25,-.5) to (-.75,-.5);
 	\draw[orange,<-] (-.75,-.5) to (-.75,.5);
 	\draw[in=90,out=-90] (.25,-.5) to (-.75,-1.5);
 	\draw (-.75,-1.5) to (-.75,-2.4);
 	\draw[white,line width=1mm] (-.25,-2) to (-1.25,-1);
 	\draw[orange,in=-90,out=90,->] (-.25,-2) to (-1.25,-1);
 	\draw[orange,in=-90,out=-90,looseness=2,<-] (-.25,-2) to (.75,-2);
 	\draw[orange] (.75,-2) to (.75,-.2);
 	\draw[orange] (-1.25,-1) to (-1.25,0);
 	\draw[orange,in=90,out=90,looseness=2,->] (-1.25,0) to (-1.75,0);
 	\draw[orange,->] (-1.75,0) to (-1.75,-2.4);
 	\end{tikzpicture}} = \raisebox{-10mm}{\begin{tikzpicture}
 \draw[orange,<-] (0,0) to (0,3);
 \draw (.4,0) to (.4,3);
 \node[scale=.8] at (-.1,-.2) {$\ol \Lambda_{k+1}$};
 \node[scale=.8] at (.8,-.2) {$\Delta_{k+1}$}; 
\end{tikzpicture}}
 \end{equation}
Now an easy application of \Cref{lamdakunidual} reveals \Cref{barwunitary} .

Thus, we have the desired result .
}
\end{rem}
Hence, $\left(\Lambda_{\bullet}, W_\bullet^\Lambda\right) : \Sigma_\bullet \to \Delta_\bullet$ is a dualizable $1$-cell in $\textbf{UC} \ $ with dual $\left(\ol \Lambda_{\bullet}, \ol W_\bullet^\Lambda\right)$ as described above .

\vspace*{4mm}

We are now in a position to describe our desired dualizable $1$-cell $\left(X_\bullet, W_\bullet \right)$ which will split $\left(Q_\bullet, m_\bullet, i_\bullet \right)$ as Q-system.\\
Define $\left(X_\bullet, W_\bullet \right) \coloneqq \Lambda_{\bullet} \boxtimes F_\bullet = \left( \left\{\Lambda_k F_k\right\}_{k \geq 0} , \left\{\raisebox{-10mm}{\begin{tikzpicture}
	\draw[in=-90,out=90,looseness=2] (0,0) to (2,2);
	\draw[white,line width=1mm,in=-90,out=90,looseness=2,->] (1,0) to (.5,2);
	\draw[white,line width=1mm,in=-90,out=90,looseness=2,->] (1.7,0) to (1.2,2);
	\draw[orange,in=-90,out=90,looseness=2,->] (1,0) to (.5,2);
	\draw[violet,in=-90,out=90,looseness=2,->] (1.7,0) to (1.2,2);
	\node[scale=.8] at (.8,-.2) {$\Lambda_{k-1}$};
	\node[scale=.8] at (1.5,-.2) {$F_{k-1}$};
	\node[scale=.8] at (-.2,-.2) {$\Delta_{k}$};
	\node[scale=.6] at (1,1.2) {$\Sigma_{k}$};
	\node[scale=.8] at (2.2,2.2) {$\Gamma_{k}$};
	\node[scale=.8] at (.5,2.2) {$\Lambda_{k}$};
	\node[scale=.8] at (1.3,2.2) {$F_{k}$};
	\end{tikzpicture}}\right\}_{k \geq 1} \right) \in 
\t{\textbf{UC}}_1 \left(\Gamma_\bullet , \Delta_\bullet\right)$ .  \\
\comments{For each $k \geq 0$ we define $W_{k+1} : \Delta_{k+1} X_{k} \to X_{k+1} \Gamma_{k+1}$ as follows: 
\[  W_{k+1} \coloneqq \begin{cases}
\raisebox{-10mm}{\begin{tikzpicture}
\draw[in=-90,out=90,looseness=2] (0,0) to (2,2);
\draw[white,line width=1mm,in=-90,out=90,looseness=2,->] (1,0) to (.5,2);
\draw[white,line width=1mm,in=-90,out=90,looseness=2,->] (1.7,0) to (1.2,2);
\draw[orange,in=-90,out=90,looseness=2,->] (1,0) to (.5,2);
\draw[violet,in=-90,out=90,looseness=2,->] (1.7,0) to (1.2,2);
\node[scale=.8] at (.8,-.2) {$\Lambda_k$};
\node[scale=.8] at (1.5,-.2) {$F_k$};
\node[scale=.8] at (-.2,-.2) {$\Delta_{k+1}$};
\node[scale=.6] at (1,1.2) {$\Sigma_{k+1}$};
\node[scale=.8] at (2.2,2.2) {$\Gamma_{k+1}$};
\node[scale=.8] at (.5,2.2) {$\Lambda_{k+1}$};
\node[scale=.8] at (1.3,2.2) {$F_{k+1}$};
\end{tikzpicture}} & \ \ \ \ \t{if} \ k \geq l \\
\t{Id}_{X_l \, \Gamma_{l} \cdots \Gamma_{k+1}}  & \ \ \ \ \t{if} \ 0 \leq k \leq l-1 \ .
\end{cases}\]
}
Pictorially, we denote $X_k$ by \raisebox{-4mm}{\begin{tikzpicture}
	\draw[blue,->] (0,0) to (0,1);
	\end{tikzpicture}} , $\ol X_k$ by \raisebox{-4mm}{\begin{tikzpicture}
	\draw[blue,<-] (0,0) to (0,1);
	\end{tikzpicture}} , $W_k$ by
\raisebox{-4mm}{
	\begin{tikzpicture}
	\draw[white,line width=1mm,out=-90,in=90] (-.25,.5) to (.25,-.5);
	\draw[out=-90,in=90,blue,<-] (-.25,.5) to (.25,-.5);
	\begin{scope}[on background layer]
	\draw[out=90,in=-90] (-.25,-.5) to (.25,.5);
	\end{scope}
	\node[right] at (0.15,-0.4) {$X_{k-1}$};
	\node[right] at (0.15,0.3) {$\Gamma_{k}$};
	\node[left] at (-0.15,-0.4) {$\Delta_{k}$};
	\node[left] at (-0.15,0.3) {$X_{k}$};
	\end{tikzpicture}}
and $W_k^*$ by
\raisebox{-4mm}{
	\begin{tikzpicture}
	\draw[white,line width=1mm,out=-90,in=90] (.25,.5) to (-.25,-.5);
	\draw[out=-90,in=90,blue,<-] (.25,.5) to (-.25,-.5);
	\begin{scope}[on background layer]
	\draw[out=90,in=-90] (.25,-.5) to (-.25,.5);
	\end{scope}
	\node[right] at (0.15,-0.4) {$\Gamma_{k}$};
	\node[right] at (0.15,0.3) {$X_{k-1}$};
	\node[left] at (-0.15,-0.4) {$X_{k}$};
	\node[left] at (-0.15,0.3) {$\Delta_{k}$};
	\end{tikzpicture}} .\\
Define $\ol W_k \coloneqq \raisebox{-6mm}{
	\begin{tikzpicture}
	\draw[white,line width=1mm,out=-90,in=90] (-.25,.5) to (.25,-.5);
	\draw[out=-90,in=90,blue,->] (-.25,.5) to (.25,-.5);
	\begin{scope}[on background layer]
	\draw[out=90,in=-90] (-.25,-.5) to (.25,.5);
	\end{scope}
	\node[right] at (0.2,-0.4) {$\ol X_{k-1}$};
	\node[right] at (0.15,0.3) {$\Delta_{k}$};
	\node[left] at (-0.15,-0.4) {$\Gamma_{k}$};
	\node[left] at (-0.15,0.3) {$\ol X_{k}$};
	\end{tikzpicture}}
\coloneqq
\raisebox{-10mm}{
	\begin{tikzpicture}
	\draw[white,line width=1mm,out=-90,in=90] (.25,.5) to (-.25,-.5);
	\draw[out=-90,in=90,blue,<-] (.25,.5) to (-.25,-.5);
	\begin{scope}[on background layer]
	\draw[out=90,in=-90] (.25,-.5) to (-.25,.5);
	\draw (-.25,.5) to (-.25,.8);
	\draw (.25,-.5) to (.25,-.8);
	\end{scope}
	\node[right,scale=0.7] at (0,-1) {$\Gamma_{k}$};
	\node[left,scale=0.7] at (0,1) {$\Delta_{k}$};
	\draw[blue,in=90,out=90,looseness=2,->] (.25,.5) to (.75,.5);
	\draw[blue,->] (.75,.5) to (.75,-.8);
	\draw[blue,in=-90,out=-90,looseness=2,<-] (-.25,-.5) to (-.75,-.5);
	\draw[blue,<-] (-.75,-.5) to (-.75,.8);
	\end{tikzpicture}} \	\t{and} \ $ $\left(\ol W_k\right)^* $ $ \coloneqq 
\raisebox{-8mm}{
	\begin{tikzpicture}
	\draw[out=-90,in=90](-.25,.5) to (.25,-.5);
	\draw[white,line width=1mm,out=-90,in=90] (-.25,-.5) to (.25,.5);
	\draw[blue,out=90,in=-90,<-] (-.25,-.5) to (.25,.5);
	\node[right] at (0.2,-0.4) {$\Delta_{k}$};
	\node[right] at (0.15,0.3) {$\ol X_{k-1}$};
	\node[left] at (-0.15,-0.4) {$\ol X_{k}$};
	\node[left] at (-0.15,0.3) {$\Gamma_{k}$};
	\end{tikzpicture}} 
\coloneqq
\raisebox{-8mm}{
	\begin{tikzpicture}
	\draw[white,line width=1mm,out=-90,in=90] (-.25,.5) to (.25,-.5);
	\draw[out=-90,in=90,blue,<-] (-.25,.5) to (.25,-.5);
	\draw[blue,in=90,out=90,looseness=2,->] (-.25,.5) to (-.75,.5);
	\draw[blue] (-.75,.5) to (-.75,-.8);
	\draw[blue,in=-90,out=-90,looseness=2,<-] (.25,-.5) to (.75,-.5);
	\draw[blue,<-] (.75,-.5) to (.75,.8);
	\begin{scope}[on background layer]
	\draw[out=90,in=-90] (-.25,-.5) to (.25,.5);
	\end{scope}
	\draw (.25,.5) to (.25,.8);
	\draw (-.25,-.5) to (-.25,-.8);
	\node[right,scale=0.8] at (0.16,0.9) {$\Gamma_{k}$};
	\node[left,scale=0.8] at (-0.15,-0.9) {$\Delta_{k}$};
	\end{tikzpicture}} $ .
Thus, we arrive at our desired $1$-cell $\left(X_\bullet, W_\bullet \right) \in \textbf{UC}_1(\Gamma_\bullet, \Delta_\bullet)$. We list some of the properties of $\left(X_\bullet, W_\bullet \right) \ $ .
\begin{lem}\
 	\begin{itemize}
		\item[(i)] $\left(X_\bullet, W_\bullet \right)$ is a dualizable $1$-cell in $\normalfont \textbf{UC} \ $.
		\item [(ii)] $\left(X_\bullet, W_\bullet \right)$ has a unitarily separable dual in $\normalfont \textbf{UC} \ $.
	\end{itemize}
\end{lem}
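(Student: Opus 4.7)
The plan is to build $(\ol X_\bullet, \ol W_\bullet)$ as the composition of the two duals we have already constructed, namely set $\ol X_\bullet \coloneqq \ol F_\bullet \boxtimes \ol \Lambda_\bullet \in \textbf{UC}_1(\Delta_\bullet, \Gamma_\bullet)$, with unitary connection $\ol W_\bullet$ defined by the tensor product formula in \Cref{tensor1cell}. For the candidate evaluation $ev_X: X_\bullet \boxtimes \ol X_\bullet \to 1_{\Gamma_\bullet}$ and coevaluation $coev_X: 1_{\Delta_\bullet} \to \ol X_\bullet \boxtimes X_\bullet$, I would take the obvious nested composites
\[
ev_X \coloneqq \raisebox{-8mm}{
\begin{tikzpicture}
\draw[violet,in=90,out=90,looseness=2,->] (0,0) to (1.5,0);
\draw[orange,in=90,out=90,looseness=2,->] (.4,0) to (1.1,0);
\draw[violet,->] (0,0) to (0,-.6);
\draw[orange] (.4,0) to (.4,-.6);
\draw[orange,<-] (1.1,0) to (1.1,-.6);
\draw[violet,<-] (1.5,0) to (1.5,-.6);
\end{tikzpicture}} \qquad\text{and}\qquad coev_X \coloneqq \raisebox{-8mm}{
\begin{tikzpicture}
\draw[orange,in=-90,out=-90,looseness=2,->] (0,0) to (1.5,0);
\draw[violet,in=-90,out=-90,looseness=2,->] (.4,0) to (1.1,0);
\draw[orange,<-] (0,0) to (0,.6);
\draw[violet] (.4,0) to (.4,.6);
\draw[violet,<-] (1.1,0) to (1.1,.6);
\draw[orange,->] (1.5,0) to (1.5,.6);
\end{tikzpicture}}
\]
using the $ev, coev$ for $F_\bullet$ and $\Lambda_\bullet$ already in hand.

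For part (i), I would verify the conjugate equations for $X_\bullet, \ol X_\bullet$ by reducing them (via the nested definition of $ev_X, coev_X$) to the conjugate equations for $F_\bullet, \ol F_\bullet$ on the inside, followed by those for $\Lambda_\bullet, \ol \Lambda_\bullet$ on the outside; these hold by \Cref{lambdaklem}(i) and \Cref{Fkequivalence}. The one real content is checking that the sequences of evaluation/coevaluation $2$-cells so defined actually live in $\textbf{UC}_2$, i.e.\ satisfy exchange relations with $W_\bullet$ and $\ol W_\bullet$ eventually. Since $W_k = W^\Lambda_k \boxtimes W^F_k$ (and similarly for $\ol W_k$) by definition of $\Lambda_\bullet \boxtimes F_\bullet$, the exchange relation for $X_\bullet$-solutions decouples into the exchange relations for the $F_\bullet$- and $\Lambda_\bullet$-solutions, which are available via \Cref{xrellambdak} and the analogous fact for the equivalence $F_\bullet$ (where all the connections are unitaries, making exchange a formal consequence). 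By \Cref{UCisomorphism} we may ignore the finitely many levels $k < l$.

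For part (ii), I would observe that $ev_X \circ ev_X^* = ev_\Lambda \circ (\Lambda \, (ev_F \circ ev_F^*) \, \ol \Lambda) \circ ev_\Lambda^*$ after expanding the nested pictures. The inner factor $ev_F \circ ev_F^*$ is the identity on $\t{id}_{\mcal R_{A_k}}$ because each $F_k$ is an adjoint equivalence with unitary unit and counit (\Cref{Fkequivalence}), so $ev_F$ is itself a natural unitary; the outer factor $ev_\Lambda \circ ev_\Lambda^*$ equals the identity on $\t{id}_{\mcal R_{H_k}}$ by \Cref{lambdaklem}(iii) together with \Cref{lambdaklem}(ii). Combining these two gives $ev_X \circ ev_X^* = \t{id}_{1_{\Gamma_\bullet}}$ eventually, establishing unitary separability.

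The only genuinely delicate step is the bookkeeping for exchange relations in part (i): one must verify that the nested horizontal composites commute with $W_\bullet$ at levels $k \geq l$, which I expect to be straightforward but a bit tedious because one has to interleave the exchange relations for $W^F_\bullet$ (which are automatic from unitarity of $W^F$ and the equivalence nature of $F_k$) with those for $W^\Lambda_\bullet$ and $\ol W^\Lambda_\bullet$; this is the hardest part, but it is essentially formal once we use that solutions to the conjugate equations are unique up to isomorphism in each hom-category (\Cref{xrelunique}).
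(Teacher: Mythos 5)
Your proposal is correct and follows essentially the same route as the paper: part (i) is exactly the paper's observation that a composite of two dualizable $1$-cells is dualizable (with the nested evaluation/coevaluation built from those of $F_\bullet$ and $\Lambda_\bullet$), and part (ii) reduces, as in the paper, to \Cref{lambdaklem} together with the unitarity of $ev_F$ and \Cref{UCisomorphism}. The only slip is cosmetic: the composite $ev_X\circ ev_X^*$ you compute lives in $\t{End}\left(\t{Id}_{\mcal R_{B_k}}\right)$, so the conclusion should read $\t{id}_{1_{\Delta_\bullet}}$ rather than $\t{id}_{1_{\Gamma_\bullet}}$.
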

\begin{proof}
\begin{itemize}
	\item [(i)] \comments{We have already seen that each $X_k : \mcal M_k \to \mcal R_{G_k}$ is a bi-faithful functor. To verify dualizability of $X_\bullet$ it is enough to show that $\raisebox{-6mm}{\begin{tikzpicture}
	\draw[blue,in=-90,out=-90,looseness=2,<-] (0,0) to (1,0);
	\node[scale=.8] at (0,.2) {$X_k$};
	\node[scale=.8] at (1,.2) {$\ol X_k$};
	\end{tikzpicture}}$ and $\raisebox{-6mm}{\begin{tikzpicture}
	\draw[blue,in=-90,out=-90,looseness=2,->] (0,0) to (1,0);
	\node[scale=.8] at (1,.2) {$X_k$};
	\node[scale=.8] at (0,.2) {$\ol X_k$};
	\end{tikzpicture}}$ satisfy exchange relations for $k \geq l \ $. Now,
\[\raisebox{-4mm}{\begin{tikzpicture}
	\draw[blue,in=-90,out=-90,looseness=2,<-] (0,0) to (1,0);
	\node[scale=.8] at (0,.2) {$X_k$};
	\node[scale=.8] at (1,.2) {$\bar{X}_k$};
	\end{tikzpicture}} = \raisebox{-6mm}{\begin{tikzpicture}
	\draw[violet,in=-90,out=-90,looseness=2,<-] (0,0) to (1,0);
	\draw[orange,in=-90,out=-90,looseness=2,<-] (-.4,0) to (1.4,0);
	\node[scale=.8] at (0,.2) {$F_k$};
	\node[scale=.8] at (1,.2) {$\ol F_k$};
	\node[scale=.8] at (-.5,.2) {$\Lambda_k$};
	\node[scale=.8] at (1.5,.2) {$\ol \Lambda_k$};
	\end{tikzpicture}} \ \ \ \ \t{and} \ \ \ \ \raisebox{-4mm}{\begin{tikzpicture}
	\draw[blue,in=-90,out=-90,looseness=2,->] (0,0) to (1,0);
	\node[scale=.8] at (1,.2) {$X_k$};
	\node[scale=.8] at (0,.2) {$\ol X_k$};
	\end{tikzpicture}} = \raisebox{-6mm}{\begin{tikzpicture}
	\draw[orange,in=-90,out=-90,looseness=2,->] (0,0) to (1,0);
	\draw[violet,in=-90,out=-90,looseness=2,->] (-.4,0) to (1.4,0);
	\node[scale=.8] at (0,.2) {$\ol \Lambda_k$};
	\node[scale=.8] at (1,.2) {$\Lambda_k$};
	\node[scale=.8] at (-.5,.2) {$\ol F_k$};
	\node[scale=.8] at (1.5,.2) {$F_k$};
	\end{tikzpicture}} \ \ \ \ \t{for each} \ \ k \geq l  \]
Hence, by \Cref{xrellambdak} and \Cref{xrelFk} we get our desired result.}
$\left(X_\bullet, W_\bullet\right)$ being a composition of two dualizable $1$-cells $\left(\Lambda_{\bullet}, W_\bullet^\Lambda \right)$ and $\left(F_\bullet, W_\bullet^F \right)$ concludes the result .

\item[(ii)] This is immediate from \Cref{UCisomorphism} and (iii) of \Cref{lambdaklem} .
\end{itemize}
\end{proof}

\subsection{Isomorphism of $Q$-systems}\

In this subsection, we build an isomorhpism between $\ol X_\bullet \boxtimes X_\bullet$ and $Q_\bullet$. We construct unitaries $\gamma^{(k)} : \ol X_k X_k \to Q_k$ for each $k \geq l$ which intertwines the mutliplication and unit maps. In the next subsection, we verify the exchange relation of $\gamma^{(k)}$ for each $k \geq l$, thus implementing isomorphism of the aforementioned $Q$-systems in $\textbf{UC} \ $.

\vspace*{2mm}

For $k \geq l$ and for each $x \in \t{Ob}(\mcal M_k)$, define a map $\beta_x^{(k)} : \ol \Lambda_k \Lambda_k F_k(x) \to F_k Q_k (x) $ as follows :
\[ F_k(x) \us{A_k} \boxtimes H_k \us{H_k} \boxtimes {H_k}_{A_k} \ni u \boxtimes \xi_1 \boxtimes \xi_2 \os{\beta_x^{(k)}}\longmapsto \raisebox{-10mm}{\begin{tikzpicture}
	\draw (-.3,0) to (-.3,-2);
	\draw (.3,0) to (.3,-2);
	\draw[dashed] (.5,0) to (.5,-2);
	\draw[densely dotted] (.1,0) to (.1,.8);
	\draw[red] (-.7,-1.1) to (-.7,.8);
	\node at (0,-.5) {$\cdots$};
	\node at (0,-1.6) {$\cdots$};
	\node[scale=.8] at (-.5,-2) {$\Gamma_k$};
	\node[scale=.8] at (.1,-2) {$\Gamma_1$};
	\node at (.9,-.6) {$m_0$};
	\node at (.9,-1.8) {$m_0$};
	\node[scale=.8] at (-1,.4) {$Q_k$};
	\node at (.3,.5) {$x$};
	\node[draw,thick,rounded corners,fill=white,minimum width=35] at (.1,0) {$u$};
	\node[draw,thick,rounded corners,fill=white,minimum width=45] at (0,-1.1) {$\xi_1 \cdot \xi_2$};
	\end{tikzpicture}} \in F_k Q_k(x)  \]  
It is easy to see that, each $\beta_x^{(k)}$ is an isometry. Since, $ _{A_k} H_k \us{H_k} \boxtimes {H_k}_{A_k}$ is unitarily isomorhpic to $_{A_k}{H_k}_{A_k}$ and by application of \Cref{Unitary} we see that $\ol \Lambda_k \Lambda_k F_k(x)$ and $F_k Q_k (x)$ has same dimension (as a vector space). Hence surjectiveness will follow. Thus, each $\beta_x^{(k)}$ is a unitary. Also, it easily follows that each $\beta_x^{(k)}$ is a natural in $x$. Thus, we get a unitary natural transformation $\beta^{(k)} : \ol \Lambda_k \Lambda_k F_k \to F_k Q_k \ $.

Define $\gamma^{(k)} \coloneqq \raisebox{-12mm}{\begin{tikzpicture}
	\draw[red] (0.2,.3) to (0.2,1);
	\draw[violet,in=90,out=90,looseness=2] (-.4,.3) to (-.9,.3);
	\draw[violet,<-] (.4,-.35) to (.4,-1);
	\draw[orange,<-] (0,-.35) to (0,-1);
	\draw[orange,->] (-.4,-.35) to (-.4,-1);
	\draw[violet,->] (-.9,.3) to (-.9,-1);
	\node[draw,thick,rounded corners,fill=white,minimum width=35] at (0,0) {$\beta^{(k)}$};
	\node[scale=.8] at (.3,1.2) {$Q_k$};
	\node[scale=.8] at (.5,-1.2) {$F_k$};
	\node[scale=.8] at (.1,-1.2) {$\Lambda_k$};
	\node[scale=.8] at (-.4,-1.3) {$\ol \Lambda_k$};
	\node[scale=.8] at (-1.1,-.6) {$\ol F_k$};
	\end{tikzpicture}} : \ol X_k X_k \to Q_k$. 
We show that $\gamma^{(k)}$ is an isomorphism of Q-systems $\ol X_k X_k$ and $Q_k$ for $k \geq l$.  Each $\gamma^{(k)}$ is a unitary because each $\beta^{(k)}$ is so and each $F_k$ is an adjoint equivalence (see \Cref{Fkequivalence}). We need to show that $\gamma^{(k)}$ intertwines the multiplication and unit maps. We need to show that $\raisebox{-8mm}{
	\begin{tikzpicture}
	\draw[blue,->] (-.2,-.3) to (-.2,-1);
	\draw[blue,<-] (.2,-.35) to (.2,-1);
	\draw[blue,->] (.8,-.3) to (.8,-1);
	\draw[blue,<-] (1.2,-.35) to (1.2,-1);
	\draw[red,in=90,out=90,looseness=2] (0,.3) to (1,.3);
	\node at (.5,.88) {$\red{\bullet}$};
	\draw[red] (.5,.88) to (.5,1.4);
	\draw[blue,in=90,out=90,looseness=2,->] (2.7,-1) to (3.2,-1);
	\node at (2,0) {$=$};
	\draw[blue,->] (2.5,-.3) to (2.5,-1);
	\draw[blue,<-] (3.4,-.35) to (3.4,-1);
	\draw[red] (3,.3) to (3,1.2);
	\node[draw,thick,rounded corners,fill=white] at (0,0) {$\gamma^{(k)}$};
	\node[draw,thick,rounded corners,fill=white] at (1,0) {$\gamma^{(k)}$};
	\node[draw,thick,rounded corners,fill=white,minimum width=35] at (3,0) {$\gamma^{(k)}$};
	\end{tikzpicture}}$ and 
$\raisebox{-10mm}{
	\begin{tikzpicture}
	\draw[red] (0,.3) to (0,1);
	\draw[blue,->] (-.2,-.3) to (-.2,-1);
	\draw[blue,<-] (.2,-.3) to (.2,-1);
	\draw[blue,in=-90,out=-90,looseness=2] (-.2,-1) to (.2,-1);
	\node[draw,thick,rounded corners,fill=white] at (0,0) {$\gamma^{(k)}$};
	\end{tikzpicture}} \ = \ 
\raisebox{-6mm}{
	\begin{tikzpicture}
	\draw[red] (0,-.4) to (0,1);
	\node at (0,-.4) {$\red{\bullet}$};
	\end{tikzpicture}}$ for $k \geq l$. This is what we prove next.
\begin{prop}\label{Qsysiso}
For $k \geq l$,	$\gamma^{(k)} : \ol X_kX_k \to Q_k$ is an isomorphism of Q-systems.
\end{prop}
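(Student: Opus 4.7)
The plan is to verify the two intertwining conditions for $\gamma^{(k)}$ directly, by unfolding definitions in the graphical calculus. Unitarity of $\gamma^{(k)}$ is already recorded (it is a composite of the unitary $\beta^{(k)}$ with the adjoint equivalence data for $F_k$), so what remains is the intertwining of multiplication and unit.

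For multiplication, the defining formula $\beta^{(k)}_x(u \boxtimes \xi_1 \boxtimes \xi_2) = (\xi_1 \cdot \xi_2) \circ u$ says that $\beta^{(k)}$ performs the C*-algebra product on $H_k$ (from \Cref{HkCstar}) in the middle. On the other hand, by \Cref{lambdaksol}(ii), the evaluation $ev_{\Lambda_k}$ underlying the Q-system multiplication on $\ol\Lambda_k\Lambda_k$ is precisely $q \boxtimes \xi_1 \boxtimes \xi_2 \mapsto q.(\xi_1 \cdot \xi_2)$, so $\beta^{(k)}$ intertwines the Q-system multiplication on $\ol\Lambda_k\Lambda_k$ (applied at $F_k$) with the image under $F_k$ of $\widetilde m_k$ on $H_k$. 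Under the bimodule identification of \Cref{connesfus}, $\widetilde m_k$ is exactly $m_k$. Now the Q-system multiplication on $\ol X_k X_k = \ol F_k\ol\Lambda_k\Lambda_k F_k$ decomposes as $ev_{\Lambda_k}$ sandwiched in the middle together with an $ev_{F_k}$ in the innermost position; passing from $\beta^{(k)}$ to $\gamma^{(k)}$ involves pre- and post-composition with the coevaluation of $F_k$, and the internal $ev_{F_k}$ of the multiplication cancels against one of these coevaluations by a zig-zag for the adjoint equivalence $F_k$. The surviving diagram is the intertwining equation for $m_k$.

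For the unit, the coevaluation $coev_{X_k} : \mathrm{Id}_{\mcal M_k} \to \ol X_k X_k$ factors (up to standard naturality of the adjoint equivalence $F_k$) as $coev_{F_k}$ followed by insertion of $coev_{\Lambda_k}$ from \Cref{lambdaksol}(iii). Conjugating by $\gamma^{(k)}$, the $coev_{F_k}$ cancels against the outer $F_k, \ol F_k$ by another zig-zag, and the computation reduces to verifying that $\beta^{(k)}$ sends $coev_{\Lambda_k}$, which by \Cref{lambdaksol}(iii) is $q \mapsto q \boxtimes 1_{H_k} \boxtimes 1_{H_k}$, to $F_k(i_k)$. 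This is immediate from the formula for $\beta^{(k)}$: the element $1_{H_k} \cdot 1_{H_k} = 1_{H_k}$ corresponds, under the isomorphism $\phi_2^{(k)}$ of \Cref{HkCstar} and the identification of \Cref{connesfus}, to the unit $i_k$ (equivalently, $\widetilde i_k$ is this unit up to the bimodule identification).

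The main obstacle is purely bookkeeping: one has to manage several nested adjunctions ($F_k \dashv \ol F_k$ and $\Lambda_k \dashv \ol \Lambda_k$), the C*-algebra structure on $H_k$, and the bimodule identification of \Cref{connesfus} simultaneously. No conceptual difficulty is anticipated, since $\beta^{(k)}$ was engineered precisely so that these intertwinings become tautologies once the correspondence between $(H_k, \widetilde m_k, \widetilde i_k)$ and $(Q_k, m_k, i_k)$ is brought in.
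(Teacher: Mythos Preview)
Your proposal is correct and follows essentially the same approach as the paper: both reduce the $\gamma^{(k)}$ intertwining conditions to $\beta^{(k)}$ intertwining conditions by cancelling the $F_k$-adjunction data via zig-zags, and then verify these directly from the explicit formula for $\beta^{(k)}$ together with associativity of the C*-algebra $H_k$ (for multiplication) and the identification $1_{H_k} = (i_k)_{\Gamma_k\cdots\Gamma_1 m_0}$ (for the unit). The only cosmetic difference is that for the unit you invoke \Cref{lambdaksol}(iii) to get $u\boxtimes 1_{H_k}\boxtimes 1_{H_k}$ directly, whereas the paper writes the same element via the basis $\mscr S_k$ as $\sum_\sigma u\boxtimes\sigma\boxtimes\sigma^\dagger$ before collapsing it.
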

\begin{proof}
It easily follows that, $\raisebox{-8mm}{
		\begin{tikzpicture}
		\draw[blue,->] (-.2,-.3) to (-.2,-1);
		\draw[blue,<-] (.2,-.35) to (.2,-1);
		\draw[blue,->] (.8,-.3) to (.8,-1);
		\draw[blue,<-] (1.2,-.35) to (1.2,-1);
		\draw[red,in=90,out=90,looseness=2] (0,.3) to (1,.3);
		\node at (.5,.88) {$\red{\bullet}$};
		\draw[red] (.5,.88) to (.5,1.4);
		\draw[blue,in=90,out=90,looseness=2,->] (2.7,-1) to (3.2,-1);
		\node at (2,0) {$=$};
		\draw[blue,->] (2.5,-.3) to (2.5,-1);
		\draw[blue,<-] (3.4,-.35) to (3.4,-1);
		\draw[red] (3,.3) to (3,1.2);
		\node[draw,thick,rounded corners,fill=white] at (0,0) {$\gamma^{(k)}$};
		\node[draw,thick,rounded corners,fill=white] at (1,0) {$\gamma^{(k)}$};
		\node[draw,thick,rounded corners,fill=white,minimum width=35] at (3,0) {$\gamma^{(k)}$};
		\end{tikzpicture}}$ if and only if \, $\raisebox{-16mm}{
		\begin{tikzpicture}
		\draw[orange,->] (-.7,-.3) to (-.7,-2);
		\draw[orange,<-] (-.3,-.35) to (-.3,-2);
		\draw[orange,->] (.3,-1.55) to (.3,-2);
		\draw[orange,<-] (.7,-1.55) to (.7,-2);
		\draw[red,in=90,out=90,looseness=2] (0,.3) to (1,.3);
		\node at (.5,.88) {$\red{\bullet}$};
		\draw[red] (.5,.88) to (.5,1.4);
		\draw[red] (1,.3) to (1,-1);
		\draw[orange,in=90,out=90,looseness=2,->] (2.7,-1) to (3.2,-1);
		\node at (2,0) {$=$};
		\draw[orange,->] (2.5,-.3) to (2.5,-1);
		\draw[orange,<-] (3.4,-.35) to (3.4,-1);
		\draw[red] (3,.3) to (3,1.2);
		\draw[violet,->] (-.4,.3) to (-.4,1.4);
		\draw[violet,->] (2.6,.3) to (2.6,1.2);
		\draw[violet,<-] (.1,-.35) to (.1,-1);
		\draw[violet,<-] (1,-1.55) to (1,-2);
		\draw[violet,<-] (3.7,-.35) to (3.7,-1);
		\node[draw,thick,rounded corners,fill=white,minimum width=35] at (-.3,0) {$\beta^{(k)}$};
		\node[draw,thick,rounded corners,fill=white,minimum width=35] at (.6,-1.2) {$\beta^{(k)}$};
		\node[draw,thick,rounded corners,fill=white,minimum width=45] at (3.1,0) {$\beta^{(k)}$};
		\end{tikzpicture}}$. 
	
	Now the map, $\raisebox{-14mm}{
		\begin{tikzpicture}
		\draw[orange,->] (-.7,-.3) to (-.7,-2);
		\draw[orange,<-] (-.3,-.35) to (-.3,-2);
		\draw[orange,->] (.3,-1.55) to (.3,-2);
		\draw[orange,<-] (.7,-1.55) to (.7,-2);
		\draw[red,in=90,out=90,looseness=2] (0,.3) to (1,.3);
		\node at (.5,.88) {$\red{\bullet}$};
		\draw[red] (.5,.88) to (.5,1.4);
		\draw[red] (1,.3) to (1,-1);
		\draw[violet,->] (-.4,.3) to (-.4,1.4);
		\draw[violet,<-] (.1,-.35) to (.1,-1);
		\draw[violet,<-] (1,-1.55) to (1,-2);
		\draw[densely dotted] (1.6,-2) to (1.6,1.4);
		\node[draw,thick,rounded corners,fill=white,minimum width=35] at (-.3,0) {$\beta^{(k)}$};
		\node[draw,thick,rounded corners,fill=white,minimum width=35] at (.6,-1.2) {$\beta^{(k)}$};
		\end{tikzpicture}} : F_k(x) \us{A_k} \boxtimes H_k \us{H_k} \boxtimes H_k \us{A_k} \boxtimes H_k \us{H_k} \boxtimes H_k \to F_k Q_k(x) $ given as follows:
	\[\raisebox{-14mm}{
		\begin{tikzpicture}
		\draw[orange,->] (-.7,-.3) to (-.7,-2);
		\draw[orange,<-] (-.3,-.35) to (-.3,-2);
		\draw[orange,->] (.3,-1.55) to (.3,-2);
		\draw[orange,<-] (.7,-1.55) to (.7,-2);
		\draw[red,in=90,out=90,looseness=2] (0,.3) to (1,.3);
		\node at (.5,.88) {$\red{\bullet}$};
		\draw[red] (.5,.88) to (.5,1.4);
		\draw[red] (1,.3) to (1,-1);
		\draw[violet,->] (-.4,.3) to (-.4,1.4);
		\draw[violet,<-] (.1,-.35) to (.1,-1);
		\draw[violet,<-] (1,-1.55) to (1,-2);
		\draw[densely dotted] (1.6,-2) to (1.6,1.4);
		\node[draw,thick,rounded corners,fill=white,minimum width=35] at (-.3,0) {$\beta^{(k)}$};
		\node[draw,thick,rounded corners,fill=white,minimum width=35] at (.6,-1.2) {$\beta^{(k)}$};
		\end{tikzpicture}} \Bigg( u \boxtimes \xi_1 \boxtimes \xi_2 \boxtimes \xi_3 \boxtimes \xi_4 \Bigg) \  = \  \raisebox{-12mm}{\begin{tikzpicture}
		\draw (-.3,0) to (-.3,-3);
		\draw (.3,0) to (.3,-3);
		\draw[dashed] (.5,0) to (.5,-3);
		\draw[densely dotted] (.1,0) to (.1,.8);
		\draw[red] (-1,-.8) to (-1,-2);
		\draw[red] (-.75,-.5) to (-.75,.8);
		\draw[red,in=90,out=90,looseness=2] (-.5,-.8) to (-1,-.8);
		\node at (0,-.5) {$\cdots$};
		\node at (0,-1.7) {$\cdots$};
		\node at (0,-2.8) {$\cdots$};
		\node at (-.75,-.5) {$\red{\bullet}$};
		\node[draw,thick,rounded corners,fill=white,minimum width=35] at (.1,0) {$u$};
		\node[draw,thick,rounded corners,fill=white,minimum width=45] at (0,-1.1) {$\xi_1 \cdot \xi_2$};
		\node[draw,thick,rounded corners,fill=white,minimum width=50] at (-.2,-2.2) {$\xi_3 \cdot \xi_4$};
		\end{tikzpicture}} \ = \ \raisebox{-10mm}{\begin{tikzpicture}
		\draw (-.3,0) to (-.3,-2);
		\draw (.3,0) to (.3,-2);
		\draw[dashed] (.5,0) to (.5,-2);
		\draw[densely dotted] (.1,0) to (.1,.8);
		\draw[red] (-.7,-1.1) to (-.7,.8);
		\node at (0,-.5) {$\cdots$};
		\node at (0,-1.7) {$\cdots$};
		\node[draw,thick,rounded corners,fill=white,minimum width=35] at (.1,0) {$u$};
		\node[draw,thick,rounded corners,fill=white,minimum width=45] at (0,-1.1) {$(\xi_1 \cdot \xi_2)\cdot (\xi_3 \cdot \xi_4)$};
		\end{tikzpicture}}\] for every $u \in F_k(x)$ and $ \xi_1,\xi_2,\xi_3,\xi_4 \in H_k \ $. 
	It is straightforward to show that \[\raisebox{-10mm}{
		\begin{tikzpicture}
		\draw[densely dotted] (4.2,-1) to (4.2,1.2);
		\draw[orange,in=90,out=90,looseness=2,->] (2.7,-1) to (3.2,-1);
		\draw[orange,->] (2.5,-.3) to (2.5,-1);
		\draw[orange,<-] (3.4,-.35) to (3.4,-1);
		\draw[red] (3,.3) to (3,1.2);
		\draw[violet,->] (2.6,.3) to (2.6,1.2);
		\draw[violet,<-] (3.7,-.35) to (3.7,-1);
		\node[draw,thick,rounded corners,fill=white,minimum width=45] at (3.1,0) {$\beta^{(k)}$};
		\end{tikzpicture}} \Bigg( u \boxtimes \xi_1 \boxtimes \xi_2 \boxtimes \xi_3 \boxtimes \xi_4 \Bigg) \ = \ \raisebox{-10mm}{\begin{tikzpicture}
		\draw (-.3,0) to (-.3,-2);
		\draw (.3,0) to (.3,-2);
		\draw[dashed] (.5,0) to (.5,-2);
		\draw[densely dotted] (.1,0) to (.1,.8);
		\draw[red] (-.7,-1.1) to (-.7,.8);
		\node at (0,-.5) {$\cdots$};
		\node at (0,-1.7) {$\cdots$};
		\node[draw,thick,rounded corners,fill=white,minimum width=35] at (.1,0) {$u$};
		\node[draw,thick,rounded corners,fill=white,minimum width=45] at (0,-1.1) {$(\xi_1 \cdot (\xi_2 \cdot \xi_3)) \cdot \xi_4$};
		\end{tikzpicture}} \ = \ \raisebox{-10mm}{\begin{tikzpicture}
		\draw (-.3,0) to (-.3,-2);
		\draw (.3,0) to (.3,-2);
		\draw[dashed] (.5,0) to (.5,-2);
		\draw[densely dotted] (.1,0) to (.1,.8);
		\draw[red] (-.7,-1.1) to (-.7,.8);
		\node at (0,-.5) {$\cdots$};
		\node at (0,-1.7) {$\cdots$};
		\node[draw,thick,rounded corners,fill=white,minimum width=35] at (.1,0) {$u$};
		\node[draw,thick,rounded corners,fill=white,minimum width=45] at (0,-1.1) {$(\xi_1 \cdot \xi_2)\cdot (\xi_3 \cdot \xi_4)$};
		\end{tikzpicture}} \] for every $u \in F_k(x)$ and $ \xi_1,\xi_2,\xi_3,\xi_4 \in H_k \ $. The last equality follows because of associativity of $H_k$ as shown in \Cref{HkCstar}. Thus, $\gamma^{(k)}$ intertwines the multiplication maps for each $k \geq l$.

	Also, it is easy to see that $\raisebox{-10mm}{
		\begin{tikzpicture}
		\draw[red] (0,.3) to (0,1);
		\draw[blue,->] (-.2,-.3) to (-.2,-1);
		\draw[blue,<-] (.2,-.3) to (.2,-1);
		\draw[blue,in=-90,out=-90,looseness=2] (-.2,-1) to (.2,-1);
		\node[draw,thick,rounded corners,fill=white] at (0,0) {$\gamma^{(k)}$};
		\end{tikzpicture}} \ = \ 
	\raisebox{-6mm}{
		\begin{tikzpicture}
		\draw[red] (0,-.4) to (0,1);
		\node at (0,-.4) {$\red{\bullet}$};
		\end{tikzpicture}}$  if and only if \, $\raisebox{-8mm}{\begin{tikzpicture}
		\draw[red] (0.2,.3) to (0.2,1);
		\draw[orange,in=-90,out=-90,looseness=2] (0,-1) to (-.4,-1);
		\draw[violet,<-] (.4,-.35) to (.4,-1);
		\draw[orange,<-] (0,-.35) to (0,-1);
		\draw[orange,->] (-.4,-.35) to (-.4,-1);
		\draw[violet,->] (0,.3) to (0,1);
		\node[draw,thick,rounded corners,fill=white,minimum width=35] at (0,0) {$\beta^{(k)}$};
		\end{tikzpicture}} \ = \ \raisebox{-8mm}{\begin{tikzpicture}
		\draw[violet,->] (0,-1) to (0,1);
		\draw[red] (.5,-1) to (.5,1);
		\node at (.5,-1) {$\red{\bullet}$};
		\end{tikzpicture}} \ $. Now, the map $\raisebox{-8mm}{\begin{tikzpicture}
		\draw[red] (0.2,.3) to (0.2,1);
		\draw[orange,in=-90,out=-90,looseness=2] (0,-1) to (-.4,-1);
		\draw[violet,<-] (.4,-.35) to (.4,-1);
		\draw[orange,<-] (0,-.35) to (0,-1);
		\draw[orange,->] (-.4,-.35) to (-.4,-1);
		\draw[violet,->] (0,.3) to (0,1);
		\draw[densely dotted] (.8,-1) to (.8,1);
		\node[draw,thick,rounded corners,fill=white,minimum width=35] at (0,0) {$\beta^{(k)}$};
		\end{tikzpicture}} : F_k(x) \to F_k Q_k (x)$ is given as follows :
	\[\raisebox{-10mm}{\begin{tikzpicture}
		\draw[red] (0.2,.3) to (0.2,1);
		\draw[orange,in=-90,out=-90,looseness=2] (0,-1) to (-.4,-1);
		\draw[violet,<-] (.4,-.35) to (.4,-1);
		\draw[orange,<-] (0,-.35) to (0,-1);
		\draw[orange,->] (-.4,-.35) to (-.4,-1);
		\draw[violet,->] (0,.3) to (0,1);
		\draw[densely dotted] (.8,-1) to (.8,1);
		\node[draw,thick,rounded corners,fill=white,minimum width=35] at (0,0) {$\beta^{(k)}$};
		\end{tikzpicture}} \Big(u \Big) = \raisebox{-8mm}{\begin{tikzpicture}
		\draw[red] (0.2,.3) to (0.2,1);
		\draw[violet,<-] (.4,-.35) to (.4,-1);
		\draw[orange,<-] (0,-.35) to (0,-1);
		\draw[orange,->] (-.4,-.35) to (-.4,-1);
		\draw[violet,->] (0,.3) to (0,1);
		\draw[densely dotted] (.8,-1) to (.8,1);
		\node[draw,thick,rounded corners,fill=white,minimum width=35] at (0,0) {$\beta^{(k)}$};
		\end{tikzpicture}}\Big( \underset{ \sigma \in \mscr S_k}{\sum} u \boxtimes \sigma \boxtimes \sigma^{\dagger} \Big) = \raisebox{-6mm}{\begin{tikzpicture}
		\draw[densely dotted] (0,.2) to (0,1);
		\draw (-.3,0) to (-.3,-1);
		\draw (.3,0) to (.3,-1);
		\draw[red] (-.6,1) to (-.6,-1);
		\node at (-.6,-1) {$\red{\bullet}$};
		\node at (0,-.5) {$\cdots$};
		\node[draw,thick,rounded corners,fill=white,minimum width=30] at (0,0) {$u$};
		\end{tikzpicture}} \ = \ \raisebox{-8mm}{\begin{tikzpicture}
		\draw[violet,->] (0,-1) to (0,1);
		\draw[red] (.5,-1) to (.5,1);
		\draw[densely dotted] (.7,-1) to (.7,1);
		\node at (.5,-1) {$\red{\bullet}$};
		\end{tikzpicture}} \Big(u \Big) \ \ \ \ \t{for every} \ \ u \in F_k(x) \ \  \] where $\mscr S_k \subset H_k$ is as given in \Cref{Hkbasis} . Thus, $\gamma^{(k)}$ intertwines the unit maps for each $k \geq l \ $. This concludes the proposition .
\end{proof}

\subsection{Exchange relation of $\gamma^{(k)}$'s}\

To achieve isomorphism in \textbf{UC}, we still have to show that $\gamma^{(k)}$'s satisfy exchange relation for $ k \geq l $. This will establish `splitting' of $\left(Q_\bullet, m_\bullet, i_\bullet \right) \in \textbf{UC}_1(\Gamma_\bullet , \Gamma_\bullet)$ by $\left(X_\bullet, W_\bullet \right) \in \textbf{UC}_1(\Gamma_\bullet, \Delta_\bullet) $.

\begin{rem}\label{xrelgammak}
	In order to show that $\gamma^{(k)}$'s will  satisfy exchange relation for $ k \geq l $, it is enough to show that $\beta^{(k)}$'s also does so because solutions to conjugate equations for $F_k$'s and $\ol F_k$'s satisfy exchange relations for each $k \geq l$. So instead of showing exchange relation of $\gamma^{(k)}$'s we will show that $\beta^{(k)}$'s satisfy exchange relation for $k \geq l \ $.
\end{rem}

We now proceed to show that $\beta^{(k)}$'s satisfy exchange relation for $k \geq l \ $. 

\begin{prop}\label{xrelbetak}
	For $k \geq l$, $\beta^{(k)}$'s satisfy exchange relation.
\end{prop}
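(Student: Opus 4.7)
By \Cref{xrelgammak}, the task reduces to verifying that the pair $\left(\beta^{(k)},\beta^{(k+1)}\right)$ satisfies the exchange relation with respect to the connections $W^{\ol\Lambda}_{k+1}$, $W^\Lambda_{k+1}$, $W^F_{k+1}$ on the source $\ol\Lambda_\bullet\Lambda_\bullet F_\bullet$ and $W^F_{k+1}$, $W^Q_{k+1}$ on the target $F_\bullet Q_\bullet$, for all $k\geq l$. The plan is to evaluate both sides of the exchange diagram on an arbitrary object $x\in\t{Ob}(\mcal M_k)$ and on a basic tensor $u\us{A_k}\btimes\xi_1\us{H_k}\btimes\xi_2\in F_k\Gamma_{k+1}(x)\us{A_k}\btimes H_k\us{H_k}\btimes H_k$, and show directly that the outputs agree in $F_{k+1}Q_{k+1}\Gamma_{k+1}(x)$.

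First I would compute the right-hand side, namely the composition that first applies the three source connections. Using \Cref{lamdakunidual} together with the explicit formula for $W^\Lambda_{k+1}$ on \cpageref{lambdaunipageref}, a direct calculation shows that the pair $\left(\ol W^\Lambda_{k+1}\right)\circ\left(W^\Lambda_{k+1}\right)$ simply converts $\xi_1\us{H_k}\btimes\xi_2$ into its image $I_{k+1}(\xi_1)\us{H_{k+1}}\btimes I_{k+1}(\xi_2)\in H_{k+1}\us{H_{k+1}}\btimes H_{k+1}$ under the inclusion from \Cref{Hkinclusion}, modulo the identification via $W^F_{k+1}$ applied to the strand $F_k$. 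Then $\beta^{(k+1)}$ produces the picture defining $\beta$ at level $k{+}1$ with input the enlarged data. For the left-hand side, $\beta^{(k)}$ first yields the diagram with a $Q_k$-cap on $\xi_1\cdot\xi_2$, and then $W^Q_{k+1}$ (together with $W^F_{k+1}$) pushes the $Q$-strand through the new $\Gamma_{k+1}$ strand.

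The equality of the two resulting pictures then amounts to the following two facts, both available eventually in $k$. On the one hand, the $*$-algebra inclusion $H_k\os{I_{k+1}}\hookrightarrow H_{k+1}$ is a multiplicative homomorphism once $k\geq l$ (this is exactly the lemma following \Cref{HkCE}), so $I_{k+1}(\xi_1\cdot\xi_2)=I_{k+1}(\xi_1)\cdot I_{k+1}(\xi_2)$ inside $H_{k+1}$. On the other hand, the exchange relation satisfied by $m_\bullet$ with respect to $W^Q_\bullet$ (available for $k\geq l$ by \Cref{lremark}) is precisely the statement that capping with $m_k$ and then pushing through $W^Q_{k+1}$ coincides with first pushing through the two copies of $W^Q_{k+1}$ and then capping with $m_{k+1}$; graphically this is the $m_k$-version of the exchange pictures displayed right after \Cref{lremark}. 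Combining these two facts yields the equality of diagrams.

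I expect the main obstacle to be bookkeeping rather than conceptual: one has to carefully match the order in which the strands $\ol F_{k+1},\ol\Lambda_{k+1},\Lambda_{k+1},F_{k+1}$ appear after applying the source connections with the order implicit in the definition of $\beta^{(k+1)}$, and in particular keep track of the identifications of $F_k(\Gamma_{k+1}x)\us{A_{k+1}}\btimes H_{k+1}\us{H_{k+1}}\btimes H_{k+1}$ with $F_{k+1}\Gamma_{k+1}(x)\us{A_{k+1}}\btimes H_{k+1}\us{H_{k+1}}\btimes H_{k+1}$ via $W^F_{k+1}$ from \Cref{uninat}. Once this identification is correctly inserted, the computation reduces to the two displayed facts above and the proof is complete.
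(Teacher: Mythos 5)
Your proposal is correct and follows essentially the same route as the paper: reduce via \Cref{xrelgammak} to the $\beta^{(k)}$'s, then evaluate both sides of the exchange diagram on basic tensors using the explicit formulas for $W^\Lambda_{k+1}$, $\ol W^\Lambda_{k+1}$ (\Cref{lamdakunidual}) and $W^F_{k+1}$. The only cosmetic difference is that in the paper's computation the connection formulas already produce the product $\xi_1\cdot\xi_2$ at level $k$ before it is included into $H_{k+1}$, so the multiplicativity of $I_{k+1}$ and the exchange relation of $m_\bullet$ that you cite as the two key inputs are absorbed into the direct diagrammatic comparison rather than invoked as separate lemmas.
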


\begin{proof}
For $x \in \t{Ob}(\mcal M_k)$ the map, \[\raisebox{-12mm}{\begin{tikzpicture}
	\draw[red] (0.2,.3) to (0.2,1);
	\draw[in=-90,out=90,looseness=2] (-.9,-1.2) to (.8,-.2);
	\draw[white,line width=1mm] (.4,-.35) to (.4,-1.2);
	\draw[white,line width=1mm] (0,-.35) to (0,-1.2);
	\draw[white,line width=1mm] (-.4,-.35) to (-.4,-1.2);
	\draw[violet,<-] (.4,-.35) to (.4,-1.2);
	\draw[orange,<-] (0,-.35) to (0,-1.2);
	\draw[orange,->] (-.4,-.35) to (-.4,-1.2);
	\draw[violet,->] (-.4,.35) to (-.4,1);
	\draw (.8,-.2) to (.8,1);
	\node[draw,thick,rounded corners,fill=white,minimum width=35] at (0,0) {$\beta^{(k+1)}$};
	\node[scale=.8] at (.3,1.2) {$Q_{k+1}$};
	\node[scale=.8] at (.5,-1.4) {$F_k$};
	\node[scale=.8] at (.1,-1.4) {$\Lambda_k$};
	\node[scale=.8] at (-.4,-1.5) {$\ol \Lambda_k$};
	\draw[densely dotted] (1.2,-1.2) to (1.2,1);
	\node[scale=.8] at (-1.1,-1.3) {$\Sigma_{k+1}$};
	\end{tikzpicture}} \ = \ \raisebox{-16mm}{
	\begin{tikzpicture}
	\draw[white,line width=1mm,out=-90,in=90] (.25,.5) to (-.25,-.5);
	\draw[out=-90,in=90,orange,<-] (.25,.5) to (-.25,-.5);
	\draw[in=-90,out=90,looseness=2] (-.25,.5) to (2,1.8);
	\begin{scope}[on background layer]
	\draw[out=90,in=-90] (.25,-.5) to (-.25,.5);
	\draw (.25,-.5) to (.25,-.8);
	\end{scope}
	\node[right,scale=0.7] at (-.1,-1) {$\Sigma_{k+1}$};
	\node[scale=.7] at (2.4,-.5) {$x$}; 
	\draw[orange,in=90,out=90,looseness=2,->] (.25,.5) to (.75,.5);
	\draw[orange,->] (.75,.5) to (.75,-.8);
	\draw[orange,in=-90,out=-90,looseness=2,<-] (-.25,-.5) to (-.75,-.5);
	\draw[orange,<-] (-.75,-.5) to (-.75,1.6);
	\draw[white,line width=1mm] (1,-.8) to (1,1.6);
	\draw[white,line width=1mm] (1.4,-.8) to (1.4,1.6);
	\draw[orange,->] (1,-.8) to (1,1.6);
	\draw[violet,->] (1.4,-.8) to (1.4,1.6);
	\draw[densely dotted] (2.2,2.8) to (2.2,-.8);
	\draw[violet,->] (.3,2.2) to (.3,2.8);
	\draw[red] (.8,2.2) to (.8,2.8);
	\draw (2,1.8) to (2,2.8);
	\node[draw,thick,rounded corners,fill=white,minimum width=72] at (.4,1.95) {$\beta^{(k+1)}$};
	\end{tikzpicture}} : F_k(x) \us{A_k} \boxtimes H_k \us{H_k} \boxtimes H_k \us{A_k} \boxtimes A_{k+1} \to F_{k+1} Q_{k+1} \Gamma_{k+1}(x) \] 	is given as follows:

\[\raisebox{-18mm}{
	\begin{tikzpicture}
	\draw[white,line width=1mm,out=-90,in=90] (.25,.5) to (-.25,-.5);
	\draw[out=-90,in=90,orange,<-] (.25,.5) to (-.25,-.5);
	\draw[in=-90,out=90,looseness=2] (-.25,.5) to (2,1.8);
	\begin{scope}[on background layer]
	\draw[out=90,in=-90] (.25,-.5) to (-.25,.5);
	\draw (.25,-.5) to (.25,-.8);
	\end{scope}
	\node[right,scale=0.7] at (-.1,-1) {$\Sigma_{k+1}$};
	\node[scale=.7] at (2.4,-.5) {$x$}; 
	\draw[orange,in=90,out=90,looseness=2,->] (.25,.5) to (.75,.5);
	\draw[orange,->] (.75,.5) to (.75,-.8);
	\draw[orange,in=-90,out=-90,looseness=2,<-] (-.25,-.5) to (-.75,-.5);
	\draw[orange,<-] (-.75,-.5) to (-.75,1.6);
	\draw[white,line width=1mm] (1,-.8) to (1,1.6);
	\draw[white,line width=1mm] (1.4,-.8) to (1.4,1.6);
	\draw[orange,->] (1,-.8) to (1,1.6);
	\draw[violet,->] (1.4,-.8) to (1.4,1.6);
	\draw[densely dotted] (2.2,2.8) to (2.2,-.8);
	\draw[violet,->] (.3,2.2) to (.3,2.8);
	\draw[red] (.8,2.2) to (.8,2.8);
	\draw (2,1.8) to (2,2.8);
	\node[draw,thick,rounded corners,fill=white,minimum width=72] at (.4,1.95) {$\beta^{(k+1)}$};
	\end{tikzpicture}} \Big(u \us{A_k}\boxtimes \xi_1 \us{H_k}\boxtimes \xi_2 \us{A_k}\boxtimes \alpha \Big) = \raisebox{-18mm}{
	\begin{tikzpicture}
	\draw[in=-90,out=90,looseness=2] (-.25,.5) to (2,1.8);
	\draw (-.25,.5) to (-.25,-.8);
	\begin{scope}[on background layer]
	\end{scope}
	\node[right,scale=0.7] at (-.2,-1) {$\Delta_{k+1}$};
	\node[scale=.7] at (2.4,-.5) {$x$}; 
	\draw[orange,<-] (-.75,-.8) to (-.75,1.6);
	\draw[white,line width=1mm] (1,-.8) to (1,1.6);
	\draw[white,line width=1mm] (1.4,-.8) to (1.4,1.6);
	\draw[orange,->] (1,-.8) to (1,1.6);
	\draw[violet,->] (1.4,-.8) to (1.4,1.6);
	\draw[densely dotted] (2.2,2.8) to (2.2,-.8);
	\draw[violet,->] (.3,2.2) to (.3,2.8);
	\draw[red] (.8,2.2) to (.8,2.8);
	\draw (2,1.8) to (2,2.8);
	\node[draw,thick,rounded corners,fill=white,minimum width=72] at (.4,1.95) {$\beta^{(k+1)}$};
	\end{tikzpicture}} \Big(u \us{A_k}\boxtimes \xi_1 \cdot \xi_2 \us{H_k}\boxtimes \alpha \us{H_{k+1}}\boxtimes 1_{H_{k+1}} \Big)  \]
\[\raisebox{-18mm}{
	\begin{tikzpicture}
	\draw[in=-90,out=90,looseness=2] (1,.5) to (2,1.8);
	\draw (1,.5) to (1,-.8);
	\begin{scope}[on background layer]
	\end{scope}
	\node[right,scale=0.7] at (.6,-1) {$\Sigma_{k+1}$};
	\node[scale=.7] at (2.4,-.5) {$x$}; 
	\draw[orange,<-] (-.75,-.8) to (-.75,1.6);
	\draw[white,line width=1mm] (1.4,-.8) to (1.4,1.6);
	\draw[orange,->] (.4,-.8) to (.4,1.6);
	\draw[violet,->] (1.4,-.8) to (1.4,1.6);
	\draw[densely dotted] (2.2,2.8) to (2.2,-.8);
	\draw[violet,->] (.3,2.2) to (.3,2.8);
	\draw[red] (.8,2.2) to (.8,2.8);
	\draw (2,1.8) to (2,2.8);
	\node[draw,thick,rounded corners,fill=white,minimum width=72] at (.4,1.95) {$\beta^{(k+1)}$};
	\end{tikzpicture}} \Big(u \us{A_k}\boxtimes 1_{A_{k+1}} \us{H_k}\boxtimes (\xi_1 \cdot \xi_2)\alpha \us{H_{k+1}}\boxtimes 1_{H_{k+1}} \Big) = \raisebox{-10mm}{\begin{tikzpicture}
\draw[red] (-.8,0) to (-.8,1.8);
\draw (-.4,-1) to (-.4,1.8);
\draw (0,-1) to (0,1);
\draw (.6,-1) to (.6,1);
\draw[dashed] (.8,-1) to (.8,1);
\draw[densely dotted] (.4,1.4) to (.4,1.8);
\node[draw,thick,rounded corners,fill=white,minimum width=60] at (0,0) {$(\xi_1 \cdot \xi_2)\alpha$};
\node[draw,thick,rounded corners,fill=white,minimum width=30] at (.4,1.2) {$u$};
\node at (.3,.7) {$\cdots$};
\node at (.3,-.7) {$\cdots$};
\node[scale=.8] at (-.6,-1.2) {$\Gamma_{k+1}$};
\node[scale=.8] at (.1,-1.2) {$\Gamma_{k}$}; 
\node[scale=.8] at (.7,-1.2) {$\Gamma_1$};
\node[scale=.8] at (-1.2,1.2) {$Q_{k+1}$};
\node at (1.2,-.8) {$m_0$};
\node at (.6,1.7) {$x$};
\node[scale=.8] at (-.5,2) {$\Gamma_{k+1}$};
\end{tikzpicture}} = \raisebox{-10mm}{\begin{tikzpicture}
\draw (-.7,.3) to (-.7,-2);
\draw[in=-90,out=90,looseness=2] (-.7,.3) to (-.2,1.8);
\draw[white,line width=1mm,in=-90,out=90,looseness=2] (-.2,.3) to (-.7,1.8);
\draw[red,in=-90,out=90,looseness=2] (-.2,.3) to (-.7,1.8);
\draw (0,-2) to (0,1);
\draw (.6,-2) to (.6,1);
\draw[dashed] (.8,-2) to (.8,1);
\draw[densely dotted] (.4,1.4) to (.4,1.8);
\node[draw,thick,rounded corners,fill=white,minimum width=45] at (.4,0) {$\xi_1 \cdot \xi_2$};
\node[draw,thick,rounded corners,fill=white,minimum width=30] at (.4,1.2) {$u$};
\node[draw,thick,rounded corners,fill=white,minimum width=55] at (0,-1.1) {$\alpha$};
\node at (.3,.7) {$\cdots$};
\node at (.3,-.7) {$\cdots$};
\node at (.3,-1.7) {$\cdots$};
\node[scale=.8] at (-.6,-2.2) {$\Gamma_{k+1}$};
\node[scale=.8] at (.1,-2.2) {$\Gamma_{k}$}; 
\node[scale=.8] at (.7,-2.2) {$\Gamma_1$};
\node[scale=.8] at (-1.1,1.4) {$Q_{k+1}$};
\node at (1.2,-1.8) {$m_0$};
\node at (.6,1.7) {$x$};
\node[scale=.8] at (-.3,2) {$\Gamma_{k+1}$};
\end{tikzpicture}}  \] $\t{for every} \ \  u \in F_k(x), \xi_1, \xi_2 \in H_k, \alpha \in A_{k+1} \ $. Also, it  will easily follow from the definition of $\beta^{(k)}$'s that for every $u \in F_k(x),\,\xi_1,\, \xi_2 \in H_k,\,\alpha \in A_{k+1} \ $ we have,
\[\raisebox{-12mm}{\begin{tikzpicture}
	\draw[in=-90,out=90,looseness=2] (-.9,.4) to (.6,1);
	\draw[white,line width=1mm] (0.2,.3) to (0.2,1.5);
	\draw[white,line width=1mm] (-.4,.35) to (-.4,1.5);
	\draw[violet,<-] (.4,-.35) to (.4,-1.2);
	\draw[orange,<-] (0,-.35) to (0,-1.2);
	\draw[orange,->] (-.4,-.35) to (-.4,-1.2);
	\draw[violet,->] (-.4,.35) to (-.4,1.5);
	\draw (-.9,.4) to (-.9,-1.2);
	\draw[red] (0.2,.3) to (0.2,1.5);
	\draw (.6,1) to (.6,1.5);
	\node[draw,thick,rounded corners,fill=white,minimum width=35] at (0,0) {$\beta^{(k)}$};
	\node[scale=.8] at (0,1.7) {$Q_{k+1}$};
	\node[scale=.8] at (-.8,1.7) {$F_{k+1}$};
	\node[scale=.8] at (.5,-1.4) {$F_k$};
	\node[scale=.8] at (.1,-1.4) {$\Lambda_k$};
	\node[scale=.8] at (-.4,-1.5) {$\ol \Lambda_k$};
	\draw[densely dotted] (1.2,-1.2) to (1.2,1.5);
	\node[scale=.8] at (-1.1,-1.3) {$\Sigma_{k+1}$};
	\node[scale=.8] at (.8,1.7) {$\Gamma_{k+1}$};
	\end{tikzpicture}} \Big(u \us{A_k}\boxtimes \xi_1 \us{H_k}\boxtimes \xi_2 \us{A_k}\boxtimes \alpha \Big) = \raisebox{-12mm}{\begin{tikzpicture}
	\draw (-.7,.3) to (-.7,-2);
	\draw[in=-90,out=90,looseness=2] (-.7,.3) to (-.2,1.8);
	\draw[white,line width=1mm,in=-90,out=90,looseness=2] (-.2,.3) to (-.7,1.8);
	\draw[red,in=-90,out=90,looseness=2] (-.2,.3) to (-.7,1.8);
	\draw (0,-2) to (0,1);
	\draw (.6,-2) to (.6,1);
	\draw[dashed] (.8,-2) to (.8,1);
	\draw[densely dotted] (.4,1.4) to (.4,1.8);
	\node[draw,thick,rounded corners,fill=white,minimum width=45] at (.4,0) {$\xi_1 \cdot \xi_2$};
	\node[draw,thick,rounded corners,fill=white,minimum width=30] at (.4,1.2) {$u$};
	\node[draw,thick,rounded corners,fill=white,minimum width=55] at (0,-1.1) {$\alpha$};
	\node at (.3,.7) {$\cdots$};
	\node at (.3,-.7) {$\cdots$};
	\node at (.3,-1.7) {$\cdots$};
	\node[scale=.8] at (-.6,-2.2) {$\Gamma_{k+1}$};
	\node[scale=.8] at (.1,-2.2) {$\Gamma_{k}$}; 
	\node[scale=.8] at (.7,-2.2) {$\Gamma_1$};
	\node[scale=.8] at (-1.1,1.4) {$Q_{k+1}$};
	\node at (1.2,-1.8) {$m_0$};
	\node at (.6,1.7) {$x$};
	\node[scale=.8] at (-.3,2) {$\Gamma_{k+1}$};
	\end{tikzpicture}} \]
	
	Thus, $\beta^{(k)}$'s satisfy exchange relation for $k \geq l \ $.
\end{proof}

From \Cref{UCisomorphism}, \Cref{Qsysiso}, \Cref{xrelgammak} and \Cref{xrelbetak} we get the following theorem.
\begin{thm}
	$  (Q_\bullet,{W^Q}_\bullet)$ is isomorphic to $\left(\left\{\ol X_k X_k\right\}_{k \geq 0} ,  
	\left\{\raisebox{-12mm}{
		\begin{tikzpicture}[scale=1.5]
		\draw[in=-90,out=90] (0,0) to (.5,1);
		\draw[white,line width=1mm,in=-90,out=90] (.35,0) to (-.25,1);
		\draw[white,line width=1mm,in=-90,out=90] (.7,0) to (0.2,1); 
		\draw[blue,in=-90,out=90][<-] (.35,0) to (-.25,1);
		\draw[blue,in=-90,out=90][->] (.7,0) to (0.2,1);
		\node[scale=.8] at (.3,-.2) {$\ol X_k$};
		\node[scale=.8] at (.75,-.2) {$X_k$};
		\node[scale=.8] at (-.6,1) {$\ol X_{k+1}$};
		\node[scale=.8] at (.1,1.2) {$X_{k+1}$}; 
		\node[left,scale=0.7] at (0,.1) {$\Gamma_{k+1}$};
		\node[right,scale=0.7] at (.5,1) {$\Gamma_{k+1}$};
		\end{tikzpicture}}\right\}_{k \geq 1} \right)$ as $Q$-systems in ${\normalfont \textbf{UC}} \ $.
\end{thm}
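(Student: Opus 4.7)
The plan is to assemble the sequence $\{\gamma^{(k)}\}_{k \geq l}$ of unitaries constructed in \Cref{Qsysiso} into a single $2$-cell in $\textbf{UC}$ and then verify that this $2$-cell is a $Q$-system isomorphism. By \Cref{UCisomorphism}, to produce a $2$-cell in $\textbf{UC}_2(\ol X_\bullet \boxtimes X_\bullet, Q_\bullet)$ it suffices to check that $(\gamma^{(k)}, \gamma^{(k+1)})$ satisfies the exchange relation eventually in $k$; and to get an isomorphism of $1$-cells it suffices, by the same remark, that each $\gamma^{(k)}$ be a natural unitary for $k \geq l$, which is already established.

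First I would invoke \Cref{xrelgammak} to reduce exchange relation for $\gamma^{(k)}$ to exchange relation for $\beta^{(k)}$, using that the evaluation/coevaluation natural transformations of the adjoint equivalences $F_k \dashv \ol F_k$ satisfy exchange relation with respect to $W^F_\bullet$ and $\ol W^F_\bullet$ (this is the same kind of argument as in \Cref{xrellambdak}, and relies on the fact that $F_k$ is an adjoint equivalence with unitary evaluation and coevaluation as noted in \Cref{Fkequivalence}). Then \Cref{xrelbetak} supplies the exchange relation for the $\beta^{(k)}$'s, and so the sequence $\{\gamma^{(k)}\}_{k \geq l}$ is a well-defined $2$-cell in $\textbf{UC}_2(\ol X_\bullet \boxtimes X_\bullet, Q_\bullet)$.

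Next, I would observe that each $\gamma^{(k)}$ is a natural unitary (being the composition of the unitary $\beta^{(k)}$ with the unitary evaluation of $F_k \dashv \ol F_k$). By \Cref{UCisomorphism} this is enough to conclude that $\{\gamma^{(k)}\}_{k \geq l}$ implements an isomorphism between the $1$-cells $\ol X_\bullet \boxtimes X_\bullet$ and $Q_\bullet$ in $\textbf{UC}$. Finally, to conclude the isomorphism is in fact a $Q$-system isomorphism, I would use \Cref{Qsysiso}: for each $k \geq l$, $\gamma^{(k)}$ intertwines the multiplication and unit maps of the two $Q$-system structures at level $k$. Since $Q$-system compatibility of the $\gamma^{(k)}$'s is checked pointwise in $k$, and holds for all $k \geq l$, we again apply \Cref{UCisomorphism} (equality of $2$-cells being tested eventually) to conclude that the induced isomorphism of $1$-cells intertwines the multiplication $2$-cells and the unit $2$-cells in $\textbf{UC}$, finishing the proof.

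The main obstacle in this scheme is not really a new computation — all the hard work has already been done in \Cref{Qsysiso} and \Cref{xrelbetak}. The remaining technical point is simply bookkeeping: one must verify that the passage from $\beta^{(k)}$ to $\gamma^{(k)}$ (i.e., tensoring with the solutions to conjugate equations for $F_k$) preserves exchange relation, which is immediate once one knows that $F_\bullet$ is a dualizable $1$-cell in $\textbf{UC}$ with the chosen evaluation and coevaluation as its duality data. Thus the proof is essentially a packaging statement, assembling \Cref{Qsysiso}, \Cref{xrelbetak} and \Cref{UCisomorphism}.
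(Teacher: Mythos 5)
Your proposal is correct and matches the paper's argument: the theorem is obtained there exactly by combining \Cref{UCisomorphism}, \Cref{Qsysiso}, \Cref{xrelgammak} and \Cref{xrelbetak}, with the reduction from $\gamma^{(k)}$ to $\beta^{(k)}$ justified by the exchange relations of the duality data for $F_\bullet$ precisely as you describe. Nothing essential is missing.
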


\Contact


\end{document}